\documentclass[11pt]{article}
\addtolength{\oddsidemargin}{-0.1 \textwidth}
\addtolength{\textwidth}{0.2 \textwidth}
\addtolength{\topmargin}{-0.1 \textheight}
\addtolength{\textheight}{0.2 \textheight}
\usepackage[overload]{empheq}
\usepackage{latexsym,dsfont,textcomp,amsmath}
\usepackage[english]{babel}
\usepackage[utf8]{inputenc}
\usepackage{enumerate,indentfirst}
\usepackage[colorlinks,citecolor=blue,urlcolor=blue]{hyperref}
\usepackage{amssymb}
\usepackage{amsthm}
\usepackage{hyperref}
\usepackage{graphicx}
\usepackage[active]{srcltx}

\newtheorem{Theorem}{Theorem}[part]
\newtheorem{Definition}{Definition}[part]
\newtheorem{Proposition}{Proposition}[part]

\newtheorem{Lemma}{Lemma}[part]
\newtheorem{Corollary}{Corollary}[part]
\newtheorem{Remark}{Remark}[part]

\newtheorem*{Cita}{Corollary \ref{ThmLaplPont}}

\newcommand{\nc}{\newcommand}
\nc{\esssup}{\mathop{\mathrm{ess\,sup}}}
\nc{\essinf}{\mathop{\mathrm{ess\,inf}}}
\nc{\argmax}{\mathop{\mathrm{arg\,max}}}


\def \P{\mathbb{P}}
\def \N{\mathbb{N}}

\def \R{\mathbb{R}}

\def \E{\mathbb{E}}

\def \D{\mathbb{D}}

\def \1{\mathds{1}}

\def \Ec{{\cal E}}
\def \Fc{{\cal F}}

\def \Qc{{\cal Q}}

\def \Nc{{\cal N}}
\def \Sc{{\cal S}}

\def \l({{\left (}}
\def \r){{\right )}}
\def \l[{{\left [}}
\def \r]({{\right ]}}

\newcommand{\MBFigure}[6]{
$\left. \right.$ \\
\refstepcounter{figure}
\addcontentsline{lof}{figure}{\numberline{\thefigure}{\ignorespaces #5}}
\begin{center}
\begin{minipage}{#1cm}
\centerline{\includegraphics[width=#2cm,angle=#3]{#4}}
\begin{center}
\upshape{F\textsc{ig} \normal
\end{center}
size{\thefigure}. $-$} #5
\end{center}
\label{#6}
\end{minipage}
\end{center}
$\left. \right.$ \\}


\begin{document}

\title{Bessel bridges decomposition with varying dimension. Applications to finance.}
\author{
    Gabriel FARAUD \\
    Weierstrass Institute for Applied Analysis and Stochastics\\
    WIAS - Berlin\\
    faraud@wias-berlin.de
  \and
    St\'ephane GOUTTE\\
    Centre National de la Recherche Scientifique (CNRS)\\
    Universit{\'e}s Paris 7 Diderot\\
    Laboratoire de Probabilit\'es et Mod\`eles Al\'eatoires\\
    goutte@math.univ-paris-diderot.fr
}

\maketitle
\begin{abstract}
We consider a class of stochastic processes containing the classical and well-studied class of Squared Bessel processes. Our model, however, allows the dimension be a function of the time. We first give some classical results in a larger context where a time-varying drift term can be added. Then in the non-drifted case we extend many results already proven in the case of classical Bessel processes to our context. Our deepest result is a decomposition of the Bridge process associated to this generalized squared Bessel process, much similar to the much celebrated result of J. Pitman and M. Yor. On a more practical point of view, we give a methodology to compute the Laplace transform of additive functionals of our process and the associated bridge. This permits in particular to get directly access to the joint distribution of the value at $t$ of the process and its integral. We finally give some financial applications of our results.
\end{abstract}

\vspace{1cm}

\textbf{Keywords} Squared Bessel process; Bessel bridges decomposition; Laplace transform; L\'evy Ito representation; Financial applications.\\


\textbf{MSC Classification (2010):60G07 60H35 91B70} 



\section*{Introduction}
We will consider in this article a family of stochastic processes which contains the classical squared Bessel processes. We recall that for $\delta\geq 0$, a $\delta$-dimensional squared Bessel process started at $x\geq 0$ is defined as the unique strong solution of the stochastic differential equation
 \begin{equation*}
 dX_u=\delta du+2\sqrt{X_u}dW_u\quad {and} \quad X_0=x\geq0
 \end{equation*}
for all $u\in \R^+$ where $(W_u)_{u \geq 0}$ denotes a Brownian motion. There has been a huge number of publications concerning this class of processes, resulting in many tractable tools for its study. We refer in particular to Shiga-Watanabe \cite{SW}, Pitman-Yor \cite{PY}, Revuz-Yor \cite{RY} and Deelstra-Debaen \cite{DD1}, \cite{DD2}. A natural extension of this family is to replace the dimension $\delta$ by a function of the time variable. This was done by Carmona in \cite{Carmona}, extending some of the previously mentioned tools to the class of squared Bessel processes with dimension $\delta$ which is a time varying function. A second generalization of this class of processes was done by Deelstra and Delbaen in \cite{GBPhd}, \cite{DD1} and \cite{DD2}. They introduced a constant drift parameter $\beta$ and used a dimension $\delta$ which may be stochastic. Indeed, they extend results of existence of the solution of an extended stochastic differential equation
 \begin{equation*}
 dX_u=(\delta_u+2\beta X_u)du+2\sqrt{X_u}dW_u\quad {and} \quad X_0=x\geq0
 \end{equation*}
They proved that the properties of scaling and additivity of the process $(X_u)$ found by Pitman and Yor in \cite{PY} for classical squared Bessel processes stay valid for these extended squared Bessel processes. Moreover, Pitman and Yor in \cite{PY}, using some classical space-time transformations and Girsanov's theorem, have shown that the study of squared Bessel processes with drift can be reduced to the study of the squared Bessel process without drift (i.e. $\beta=0$). This property is equally true in the case of non-constant dimension. Finally, Delbaen and Shirakawa in \cite{DS} applied the class of squared Bessel processes with time varying dimension $\delta$  to the square root interest rate model.

Hence our contributions are: firstly, we generalize the family of squared Bessel processes to the class of squared Bessel processes with  stochastic dimension $(\delta_u)$ and with a drift parameter $\beta_u$ which is a function of the time. We will call this kind of process a generalized squared Bessel process (GBESQ) with stochastic dimension $(\delta_u)$ and with a drift function $\beta_u$, defined as the unique solution of the stochastic differential equation
 \begin{equation}\label{Int1}
 dX_u=(\delta_u+2\beta_u X_u)du+2\sqrt{X_u}dW_u\quad {and} \quad X_0=x\geq 0.
 \end{equation}
 We establish several classical properties for this generalized process, extending or sometimes overlapping results from the aforementioned articles. This includes results of existence and uniqueness of the solution of \eqref{Int1}; scaling and additivity properties of this solution. We finish by showing that, as in the two other previously mentioned settings, one can reduce to the situation $\beta=0$ through a change of law. Therefore we make a more precise study of the un-drifted case and give some rather tractable tools in order to compute the law of additive functionals of this process. 

We then give a more precise and theoretic version of the last result, in the form of a L\'evy-It\^o representation of this process, similar to the one exposed in Pitman and Yor \cite{PY}. However due to the fact that the dimension parameter belongs to a function space and not, as in the classical setting, to $\R^+,$ we cannot extend to our case all the results of global monotone coupling that exist in the classical setting.  Our tools rely mostly, apart from the probabilistic tool used in the setting of Bessel Processes, on functional analysis in order to extend results on the half line to a functional space. The Fréchet-Riesz representation Theorem is a quite typical example of the theoretical results we use.

Then we introduce the notion of Generalized Squared Bessel Bridges by conditioning our process to its endpoints. We give some extensions of the classical properties of Bessel bridges to our case, including an extension of the Bessel Bridge decomposition of Pitman and Yor \cite{PY}. We use this decomposition in order to give a general formula for the Laplace transform of additive functionals of the Bessel bridge. We give an example of how we can use this formula to get an explicit result. The techniques we use are very tractable and can be applied whenever one knows how to handle classical Bessel processes. We give a (if not very tractable) complete formula covering most settings in Appendix \ref{LTAF}. Our formula contains a power series, the value of which we are unable to compute explicitly, we however give some estimates on the speed of convergence of this series.
 
We conclude this paper by giving some applications to financial mathematics of our results. Indeed, we will first give some examples of financial models which are GBESQ. Then, we will give three examples of applications: first we will give an explicit way to simulate stochastic volatility model where the volatility process is again a GBESQ, then the evaluation of zero coupon bond where the interest rate is a GBESQ, and finally, we will give an explicit method to simulate default times in credit risk model using a stochastic default intensity which is a GBESQ process.

\section{Squared Bessel processes with time varying dimension.}

\subsection{Squared Bessel process} 

Let $\left(\Omega,(\Fc_t)_{t\in [0,T]},\P\right)$ be a filtered probability space and let $(W_t)_{t\in [0,T]}$ be a Brownian motion under $\P$. We suppose that the filtration $(\Fc_t)_{t\in [0,T]}$ is generated by $\left\{W_s;0\leq s \leq t\right\}$.  We first recall some results of \cite{RY}, about squared Bessel processes with constant dimension $\delta\geq0$.

 \begin{Definition}\label{DefBESQ}
 The squared Bessel process of dimension $\delta\geq0$, denoted by BES$Q^{\delta}_x$ is defined as the unique strong solution of the stochastic differential equation (SDE)
 \begin{equation}\label{EqBESQ}
 dX_u=\delta du+2\sqrt{X_u}dW_u\quad {and} \quad X_0=x\geq 0.
 \end{equation}
 In this case $X_u \geq 0$, for all $u \in [0,T].$
 \end{Definition}
 
We will denote in the sequel by $\Qc_x^\delta$, the law of BES$Q^{\delta}_x$. 
We now recall the property of additivity of squared Bessel processes which can be found in Theorem 1.2 Chap XI of \cite{RY}
\begin{Theorem}\label{ThmRY}
For every $\delta,\delta^{'}\geq 0$ and $x,x^{'}\geq 0$, we have
\begin{equation}\label{eqRY}
\Qc_x^\delta\oplus \Qc_{x^{'}}^{\delta^{'}}=\Qc_{x+x^{'}}^{\delta+\delta^{'}}.
\end{equation}
\end{Theorem}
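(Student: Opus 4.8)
The plan is to prove the additivity property $\Qc_x^\delta \oplus \Qc_{x'}^{\delta'} = \Qc_{x+x'}^{\delta+\delta'}$ by showing that if $X$ is a $\mathrm{BES}Q^\delta_x$ and $Y$ is an independent $\mathrm{BES}Q^{\delta'}_{x'}$, then $Z := X + Y$ solves the SDE defining $\mathrm{BES}Q^{\delta+\delta'}_{x+x'}$, and then invoking pathwise uniqueness for that SDE to conclude that the law of $Z$ is exactly $\Qc_{x+x'}^{\delta+\delta'}$. The additive (convolution) operation $\oplus$ on laws of nonnegative processes is precisely the law of such a sum of independent copies, so this is the natural route.

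First I would write, on a common probability space, $dX_u = \delta\, du + 2\sqrt{X_u}\, dW^1_u$ and $dY_u = \delta'\, du + 2\sqrt{Y_u}\, dW^2_u$ with $W^1, W^2$ independent Brownian motions and $X, Y$ independent, $X_0 = x$, $Y_0 = x'$. Adding the two equations gives
\begin{equation*}
dZ_u = (\delta + \delta')\, du + 2\sqrt{X_u}\, dW^1_u + 2\sqrt{Y_u}\, dW^2_u.
\end{equation*}
The key computational step is to recognize the martingale part $N_u := 2\int_0^u \sqrt{X_s}\, dW^1_s + 2\int_0^u \sqrt{Y_s}\, dW^2_s$ as a time-changed Brownian motion via Lévy's characterization: its quadratic variation is $\langle N \rangle_u = 4\int_0^u (X_s + Y_s)\, ds = 4\int_0^u Z_s\, ds$. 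Hence there exists a Brownian motion $\widehat W$ (possibly on an enlarged space, or directly by the standard representation of a continuous local martingale with this bracket) such that $N_u = \int_0^u 2\sqrt{Z_s}\, d\widehat W_s$. Care must be taken where $Z_s = 0$: one sets $d\widehat W_s = \mathbf{1}_{\{Z_s > 0\}}\, dN_s / (2\sqrt{Z_s}) + \mathbf{1}_{\{Z_s = 0\}}\, dB_s$ for an auxiliary Brownian motion $B$, and checks that this $\widehat W$ has bracket $u$ and that $2\sqrt{Z_s}\, d\widehat W_s = dN_s$ (the set $\{Z_s = 0\}$ contributes nothing to $N$ since $X, Y \geq 0$ forces $\sqrt{X_s} = \sqrt{Y_s} = 0$ there). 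This yields that $Z$ solves $dZ_u = (\delta+\delta')\, du + 2\sqrt{Z_u}\, d\widehat W_u$, $Z_0 = x + x'$.

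Finally, pathwise uniqueness for the equation $dZ = (\delta+\delta')\, du + 2\sqrt{Z_u}\, d\widehat W_u$ holds by the Yamada–Watanabe criterion (the diffusion coefficient $z \mapsto 2\sqrt{z}$ is Hölder-$1/2$), which is exactly what underpins the well-posedness asserted in Definition \ref{DefBESQ}. By the Yamada–Watanabe theorem, pathwise uniqueness plus existence of a weak solution implies uniqueness in law, so the law of $Z$ is $\Qc_{x+x'}^{\delta+\delta'}$. Since $Z = X + Y$ with $X \sim \Qc_x^\delta$, $Y \sim \Qc_{x'}^{\delta'}$ independent, this is precisely the statement $\Qc_x^\delta \oplus \Qc_{x'}^{\delta'} = \Qc_{x+x'}^{\delta+\delta'}$.

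The main obstacle is the careful handling of the representation of the additive martingale $N$ as $\int 2\sqrt{Z}\, d\widehat W$ across the (possibly nonempty, for small $\delta+\delta'$) zero set of $Z$, and the fact that $\widehat W$ may only be constructed on an enlarged probability space — but this is harmless since we only care about laws. Everything else is routine: adding SDEs, computing a quadratic variation, and citing Yamada–Watanabe. (Alternatively, one could avoid SDE uniqueness entirely and argue at the level of the infinitesimal generator / martingale problem, or via Laplace transforms $\E[\exp(-\lambda Z_u)]$, but the pathwise argument above is the cleanest given the tools recalled in the excerpt.)
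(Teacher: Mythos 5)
Your proof is correct and follows essentially the same route the paper uses: the paper recalls this theorem from Revuz--Yor without proof, but its own proof of the generalized version (Proposition \ref{PropoAdditivite}) is exactly your argument — sum the two SDEs, build the driving Brownian motion via the indicator decomposition $\mathbf{1}_{\{Z_s>0\}}\,dN_s/(2\sqrt{Z_s}) + \mathbf{1}_{\{Z_s=0\}}\,dB_s$, and conclude by uniqueness in law for the resulting equation. Your explicit appeal to Yamada--Watanabe just makes precise the uniqueness step the paper leaves implicit.
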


\subsection{Generalized squared Bessel process (GBESQ)}

Let $C$ be the space of continuous functions $\R^+\rightarrow \R^+$ and $D$ the space of continuous by part, measurable and locally bounded functions $\R^+\rightarrow \R^+$. 

\begin{Definition}\label{Def1}
For all $u\in \R^+$ , $\delta,\beta \in D$, we will call the solution in $C$ of the stochastic differential equation
\begin{equation}\label{EqDef1}
dX_u=\left(2\beta_uX_u+\delta_u\right)du+2\sqrt{X_u}dW_u
\end{equation}
with initial value $X_0=x\geq 0$, a generalized squared Bessel process with dimension $(\delta_u)$ and drift function $(\beta_u)$. We will denote $X$ as a $GBESQ^{\beta,\delta}_x.$
\end{Definition}

We will denote by $^{\beta}\Qc^{\delta}_x$ the law of X and
$\Qc^{\delta}_x:=^{0}\Qc^{\delta}_x$.

\begin{Remark}
In the above definition $(\delta_u)$ and $(\beta_u)$ are deterministic functions of the time. It is obviously possible to extend these definitions to random $(\delta_u)$ and $(\beta_u)$ provided their trajectories belong almost surely to $D$, by conditioning on the realization of $(\delta_u)$ and $(\beta_u)$. Thus all the following result are true conditionally on $(\delta_u)$ and $(\beta_u)$, or, using the typology of random media, in the "quenched" setting. It is also straightforward by time restriction of the above SDE that the solution in this case is progressively measurable with respect to $\sigma((\delta_u)_{u\le t},(\beta_u)_{u\le t},(W_u)_{u\le t}).$
\end{Remark}

\subsubsection{Existence, unicity and positivity of the solution}

We will obtain the uniqueness of the existence of the GBESQ and its positivity for a more general volatility structure. Indeed, we take the process $X$ given by
\begin{equation}\label{EqDef2}
dX_u=\left(2\beta_uX_u+\delta_u\right)du+\sigma(X_u)dW_u
\end{equation}
where $\sigma:\R^+\rightarrow\R^+$ is a function, vanishing at zero and satisfies the Holder condition, namely that for all $x,y \in \R^+$, there exists a constant $c$ such that
\begin{equation}\label{ConditionVol}
\vert \sigma(x)-\sigma(y)\vert \leq c \sqrt{\vert x-y\vert}.
\end{equation}
\begin{Remark}\label{RemarkDeelstra}
\begin{enumerate}
\item We can remark that the volatility structure of our GBESQ process \eqref{EqDef1} satisfies this condition.
\item This class of processes was studied by Deelstra in \cite{GBPhd} with a constant drift $\beta$ and a dimension $\delta$ such that $\int_0^t \delta_u du<\infty$ for all $t\in \R^+$.
\item If $\delta$ is in $D$ then $\delta$ satisfies point 2. 
\end{enumerate}
\end{Remark}

\begin{Proposition}
Let $\beta$ and $\delta$ be in $D$ and $\sigma$ a function satisfying \eqref{ConditionVol}, then for every $x\geq 0$ there is a unique solution $X_t$ of the stochastic differential equation \eqref{EqDef2} such that $X_0=x$.
\end{Proposition}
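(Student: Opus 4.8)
The plan is to follow the classical Yamada--Watanabe scheme, as in Deelstra \cite{GBPhd}: because $\sigma$ is only assumed $\tfrac12$-Hölder (condition \eqref{ConditionVol}) and not Lipschitz, one cannot run a direct Picard iteration, so instead I would establish \emph{weak existence} together with \emph{pathwise uniqueness} and then invoke the Yamada--Watanabe theorem to obtain a unique strong solution. The only difference with \cite{GBPhd} is that $\beta$ is now a function of time, which is harmless: on any horizon $[0,T]$ the functions $\beta,\delta\in D$ are bounded (and non-negative). As a preliminary I would extend $\sigma$ to $\R$ by $\sigma(x)=0$ for $x<0$, which preserves \eqref{ConditionVol} since $\sigma(0)=0$ (for $x\ge 0\ge y$, $|\sigma(x)-\sigma(y)|=|\sigma(x)-\sigma(0)|\le c\sqrt{x}\le c\sqrt{x-y}$), and replace $x$ by $x^+$ in the drift, i.e. work with $\bar b(u,x):=2\beta_u x^++\delta_u$, which is globally Lipschitz in $x$ with constant $2\beta_u$, agrees with the original drift on $\R^+$, and satisfies $\bar b(u,x)\ge 0$.

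\emph{Weak existence.} On $[0,T]$, $\bar b$ satisfies a uniform linear growth bound $|\bar b(u,x)|\le K(1+|x|)$, is continuous in $x$ and measurable in $u$, while $|\sigma(x)|=|\sigma(x)-\sigma(0)|\le c|x|^{1/2}$ is of sublinear growth and $\sigma$ is continuous; hence there is no explosion and Skorokhod's existence theorem (tightness of approximating solutions) provides a weak solution with $X_0=x$. A Gronwall argument applied to $s\mapsto\E[\sup_{u\le s}X_u^2]$, together with the Burkholder--Davis--Gundy inequality, gives the moment bound $\E[\sup_{u\le T}X_u^2]<\infty$.

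\emph{Pathwise uniqueness.} Let $X,Y$ be two solutions driven by the same $W$ with $X_0=Y_0=x$, and let $(\phi_n)\subset C^2(\R)$ be the Yamada approximations of $|\cdot|$, with $\phi_n\uparrow|\cdot|$, $|\phi_n'|\le 1$ and $0\le\phi_n''(z)\le\tfrac{2}{n|z|}$ supported in $a_n\le|z|\le a_{n-1}$, $a_n\downarrow 0$. Applying Itô's formula to $\phi_n(X_u-Y_u)$ and taking expectations, the stochastic integral has zero mean (by the moment bound and $|\phi_n'|\le 1$), the second-order term is controlled pointwise by
\[
\tfrac12\,\phi_n''(X_u-Y_u)\,(\sigma(X_u)-\sigma(Y_u))^2\ \le\ \tfrac12\cdot\frac{2}{n|X_u-Y_u|}\cdot c^2|X_u-Y_u|\ =\ \frac{c^2}{n},
\]
hence contributes at most $c^2t/n$ after integrating over $[0,t]$, and the drift term is bounded using $|\bar b(u,X_u)-\bar b(u,Y_u)|\le 2\beta_u|X_u-Y_u|$ by $2\int_0^t\beta_u\,\E|X_u-Y_u|\,du$. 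Letting $n\to\infty$ gives $\E|X_t-Y_t|\le 2\int_0^t\beta_u\,\E|X_u-Y_u|\,du$; since $\int_0^T\beta_u\,du<\infty$, Gronwall's lemma forces $X\equiv Y$ on $[0,T]$, and then on all of $\R^+$.

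By the Yamada--Watanabe theorem, weak existence and pathwise uniqueness imply that \eqref{EqDef2} (with drift $\bar b$) has a unique strong solution $X$ with $X_0=x$. To return to the original equation I would use the comparison theorem: the constant path $0$ solves the SDE with diffusion $\sigma$ (since $\sigma(0)=0$) and drift $b_1\equiv 0\le\bar b$, so under \eqref{ConditionVol} the one-dimensional comparison theorem gives $X_u\ge 0$ for all $u$, a.s.; on $\{X_u\ge 0\}$ the coefficients $\bar b$ and $\sigma$ coincide with $2\beta_u X_u+\delta_u$ and the original $\sigma$, so $X$ is the desired unique solution. The main obstacle is the pathwise uniqueness step: the lack of Lipschitz regularity of $\sigma$ requires pushing the Yamada approximation through carefully so that the local-time-type contribution disappears, while simultaneously carrying the merely measurable, time-dependent drift through the Gronwall estimate — the key point being that $\int_0^T\beta_u\,du<\infty$. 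Everything else (weak existence, moment bounds, comparison) is standard once the linear-growth bounds are in hand.
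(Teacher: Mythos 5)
Your proposal is correct and follows essentially the same route as the paper: the paper's proof simply defers to Theorem 3.2.1 of Deelstra's thesis (itself a Yamada--Watanabe argument for weak existence plus pathwise uniqueness under the H\"older condition \eqref{ConditionVol}), noting only that the time-dependent drift $\beta\in D$ is locally bounded so the same estimates go through. You have written out in full the argument the paper cites, with the time-varying $\beta$ absorbed into the Gronwall step exactly as the paper intends.
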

\begin{proof}
The proof follows from Theorem 3.2.1 in \cite{GBPhd}. Whereas as we said in Remark \ref{RemarkDeelstra} point 2. our model has a non constant drift term. But since $\beta$ is in $D$, then it is locally bounded. Hence we can work with this new drift in the proof of Theorem 3.2.1 and obtain the same result.
\end{proof}
To obtain the positivity of this solution, we will use a comparison result.
\begin{Proposition}\label{ThmCompa}
Let $X^1$ and $X^2$ be two solution of (\ref{EqDef2}) with dimension $\delta^1\in D$ and $\delta^2 \in D$ and respectively initial state $x^1>0$ and $x^2>0$. We suppose too, that the processes $X^1$ and $X^2$ are associated with the same Brownian motion $(W_u)$ and the same drift  $\beta \in D$. If for all $u \geq 0$ we have $x^2\geq x^1$ and $\delta^2_u\geq \delta^1_u$ a.s., then for every $u \geq 0$, we have
$$
P\left[X^2_u\geq X^1_u\right]=1.
$$
\end{Proposition}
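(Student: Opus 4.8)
The plan is to prove the pathwise comparison by the classical Yamada--Watanabe device; the point is that the diffusion coefficient $\sigma$ is only $\tfrac12$-Hölder, so a naive Lipschitz--Gronwall argument does not apply directly. Put $Y_u := X^1_u - X^2_u$; then $Y_0 = x^1 - x^2 \le 0$, and subtracting the two copies of \eqref{EqDef2} driven by the common Brownian motion gives
\begin{equation*}
dY_u = \bigl(2\beta_u Y_u + (\delta^1_u - \delta^2_u)\bigr)\,du + \bigl(\sigma(X^1_u) - \sigma(X^2_u)\bigr)\,dW_u ,
\end{equation*}
where $\delta^1_u - \delta^2_u \le 0$ for all $u$ by hypothesis. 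It suffices to prove $\E[Y_u^+] = 0$ for each fixed $u$, since then $Y_u \le 0$ a.s., i.e. $P[X^2_u \ge X^1_u] = 1$; by path continuity this even holds simultaneously in $u$.

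First I would fix the approximating functions. Choose $1 = a_0 > a_1 > \cdots \downarrow 0$ with $\int_{a_n}^{a_{n-1}} z^{-1}\,dz = n$, and continuous $\rho_n$ supported in $(a_n, a_{n-1})$ with $0 \le \rho_n(z) \le \tfrac{2}{nz}$ and $\int_0^1 \rho_n = 1$. Define $\phi_n(y) := \int_0^{y^+}\!\bigl(\int_0^t \rho_n(z)\,dz\bigr)\,dt$, so that $\phi_n \in C^2$, $\phi_n'(y) = 0$ for $y \le 0$, $0 \le \phi_n'(y) \le 1$, $\phi_n''(y) = \rho_n(y)$ for $y > 0$, and $0 \le \phi_n(y) \le y^+$ with $\phi_n(y) \to y^+$. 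Introduce the localizing times $\tau_N := \inf\{u \ge 0 : X^1_u + X^2_u \ge N\}$, a sequence with $\tau_N \uparrow \infty$ a.s. because the solutions of \eqref{EqDef2} do not explode. Applying Itô's formula to $\phi_n(Y_{u\wedge\tau_N})$, using $\phi_n(Y_0)=0$ and $d\langle Y\rangle_s = \bigl(\sigma(X^1_s)-\sigma(X^2_s)\bigr)^2\,ds$, the stochastic integral is a true martingale (bounded integrand on $[0,u\wedge\tau_N]$ since $\sigma$ is continuous, hence bounded on $[0,N]$, and $\phi_n'\le 1$), so that
\begin{equation*}
\E\bigl[\phi_n(Y_{u\wedge\tau_N})\bigr] = \E\!\!\int_0^{u\wedge\tau_N}\!\!\! \phi_n'(Y_s)\bigl(2\beta_s Y_s + \delta^1_s - \delta^2_s\bigr)\,ds \;+\; \tfrac12\,\E\!\!\int_0^{u\wedge\tau_N}\!\!\! \phi_n''(Y_s)\bigl(\sigma(X^1_s) - \sigma(X^2_s)\bigr)^2\,ds .
\end{equation*}

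Now the right-hand side is estimated term by term. Both integrands vanish off $\{Y_s > 0\}$, and on that set condition \eqref{ConditionVol} gives $\phi_n''(Y_s)\bigl(\sigma(X^1_s) - \sigma(X^2_s)\bigr)^2 \le \tfrac{2}{nY_s}\,c^2 Y_s = \tfrac{2c^2}{n}$, so the second term is $\le c^2 u/n \to 0$. For the first term, $\phi_n'(Y_s)(\delta^1_s - \delta^2_s) \le 0$ since $\delta^1 \le \delta^2$, while $\phi_n'(Y_s)\,2\beta_s Y_s \le 2 M_u\, Y_s^+$, where $M_u := \sup_{s\le u}|\beta_s| < \infty$ because $\beta \in D$ is locally bounded. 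Letting $n \to \infty$ (dominated convergence, the integrand being bounded on $[0, u\wedge\tau_N]$) yields $\E[Y_{u\wedge\tau_N}^+] \le 2M_u \int_0^u \E[Y_{s\wedge\tau_N}^+]\,ds$, and Gronwall's lemma forces $\E[Y_{u\wedge\tau_N}^+] = 0$ for every $N$; letting $N \to \infty$ gives $\E[Y_u^+] = 0$, as wanted.

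The only substantive obstacle is controlling the $dW$-term of $Y$ despite the non-Lipschitz $\sigma$, which is exactly what the tailoring of $\phi_n$ to the Hölder exponent $\tfrac12$ achieves; one could alternatively phrase the same computation through Tanaka's formula for $Y_u^+$ together with the vanishing of the local time of $Y$ at $0$ (again a consequence of \eqref{ConditionVol}). Everything else — the localization by $\tau_N$, the sign of the $\delta$-contribution, and the final Gronwall step — is routine.
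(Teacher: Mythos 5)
Your proof is correct and is, in substance, the same argument the paper relies on: the paper's proof is a one-line deferral to Theorem 3.2.2 of Deelstra's thesis \cite{GBPhd} (itself the Yamada--Watanabe/Ikeda--Watanabe comparison device, with the observation that a non-constant but locally bounded $\beta$ changes nothing), and you have simply written that standard argument out in full, handling the locally bounded drift via Gronwall exactly as the paper indicates.
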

\begin{proof}
We will again follow the proof of Theorem 3.2.2 in \cite{GBPhd} with a non constant drift $\beta$ using the fact that $\beta_u$ is in $D$ for all $u\in [0,T]$.
\end{proof}

\begin{Corollary}\label{CoroPositive}
The process $X$ is always positive. 
\end{Corollary}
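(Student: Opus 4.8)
The plan is to deduce positivity of the GBESQ process directly from the comparison result in Proposition \ref{ThmCompa}, using the zero-dimensional, zero-initial-value process as the obvious lower bound. The key observation is that the constant process $X^1_u \equiv 0$ is itself a solution of \eqref{EqDef2} with dimension $\delta^1 \equiv 0$ and initial state $x^1 = 0$: indeed, plugging $X_u = 0$ into the SDE gives $dX_u = (2\beta_u \cdot 0 + 0)\,du + \sigma(0)\,dW_u = 0$ since $\sigma$ vanishes at zero, so the zero function trivially satisfies the equation with the same driving Brownian motion $(W_u)$ and the same drift $\beta$.

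First I would fix an arbitrary GBESQ process $X = X^2$ with dimension $\delta = \delta^2 \in D$ and initial value $x = x^2 \geq 0$, noting that by Remark \ref{RemarkDeelstra}(1) and (3) the coefficients fall into the scope of Proposition \ref{ThmCompa}. Then I would set $X^1 \equiv 0$, $\delta^1 \equiv 0$, $x^1 = 0$, and check the hypotheses of the comparison proposition: both processes are driven by the same $(W_u)$, share the same drift $\beta \in D$, and satisfy $x^2 \geq x^1 = 0$ together with $\delta^2_u \geq \delta^1_u = 0$ for all $u$ (the latter because the dimension takes values in $\R^+$). The conclusion $P[X^2_u \geq 0] = 1$ for every $u \geq 0$ then follows immediately. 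Since $X$ has continuous trajectories (it is a solution in $C$), a standard argument upgrading the "for each $u$" statement to "for all $u$ simultaneously" shows $P[X_u \geq 0 \text{ for all } u \geq 0] = 1$.

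One small technical point worth addressing is that Proposition \ref{ThmCompa} is stated for strictly positive initial values $x^1 > 0$ and $x^2 > 0$, whereas here we need $x^1 = 0$ and possibly $x^2 = 0$. I would handle this either by invoking the comparison result with $x^1 = \eps$ for arbitrary $\eps > 0$ and letting $\eps \to 0$ (using continuity of the solution in the initial condition, itself a consequence of the pathwise uniqueness established in the preceding Proposition), or by simply remarking that the proof of Theorem 3.2.2 in \cite{GBPhd} that Proposition \ref{ThmCompa} relies on extends verbatim to the boundary case. This mild extension of the comparison principle to the degenerate initial value is the only real obstacle, and it is routine.

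Alternatively, if one prefers to avoid touching the boundary case altogether, one can argue that for any $x \geq 0$ the GBESQ process started at $x$ dominates pathwise the GBESQ process started at any $x' \in (0, x]$ — no, this still requires $x' > 0$ on both sides — so the cleanest route remains the limiting argument $\eps \downarrow 0$. In summary, the proof is a two-line application of Proposition \ref{ThmCompa} against the identically-zero comparison solution, modulo the boundary-value caveat just discussed.
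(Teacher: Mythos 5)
Your proof is essentially identical to the paper's: both apply Proposition \ref{ThmCompa} with the identically-zero comparison solution $X^1\equiv 0$, $\delta^1\equiv 0$, $x^1=0$. In fact you are more careful than the paper, which silently ignores the mismatch between its hypothesis $x^1>0$ and the needed boundary case $x^1=0$; your $\eps\downarrow 0$ remark addresses a gap the authors left implicit.
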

\begin{proof}
This follows directly from the Proposition \ref{ThmCompa}. Indeed, taking $X^1_0=0$ and $\delta^1\equiv 0$, then the unique solution of (\ref{EqDef2}) is given by $X^1\equiv 0$. Then taking $X^2$ another solution of \eqref{EqDef2} such that  $X^2_0\geq X^1_0=0$ and $\delta^2_u\geq \delta^1_u\equiv 0$ a.s. we obtain that $X^2_u\geq 0$  a.s.
\end{proof}

\begin{Remark}
Hence taking for all $u \in [0,T]$,  $\sigma(X_u)=2\sqrt{X_u}$, then we have that the stochastic differential equation \eqref{EqDef1} admits a unique positive solution.

\end{Remark}

\subsection{Results about GBESQ}

We come back now to our GBESQ process given as the unique solution of the stochastic differential equation \eqref{EqDef1}

\begin{Proposition}\label{PropoAdditivite} 
Let $(X^{(\delta)}_u)_{u \in [0,T]}$ and $(Y^{(\delta^{'})}_u)_{u \in [0,T]}$ be two independent generalized squared Bessel processes with time varying dimension $\delta_u,\delta_u^{'}\geq 0$ with the same drift $\beta_u$ which is a bounded function and initial states $x,x^{'}\geq 0$. Then
\begin{equation}\label{eqAdd}
\{X^{(\delta)}_u+Y^{(\delta^{'})}_u; 0\leq u, X^{(\delta)}_0=x,Y^{(\delta^{'})}_0=x^{'}\}\stackrel{law}{=}\{H^{(\delta+\delta^{'})}_u; 0\leq u,H^{(\delta+\delta^{'})}_0=x+x^{'}\},
\end{equation}
where $H$ is another GBESQ with stochastic dimension $\delta_u+\delta_u^{'}$, with drift equal to $\beta_u$ and initial state $x+x^{'}$. In other words,
\begin{equation}
^\beta \Qc^{\delta}_x \oplus ^\beta \Qc^{\delta'}_{x'}=^\beta \Qc^{\delta+\delta'}_{x+x'}.
\end{equation}
\end{Proposition}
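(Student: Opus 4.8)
The plan is to mimic the classical argument behind Theorem \ref{ThmRY} (Theorem 1.2, Chapter XI of \cite{RY}), adapting it to the time–dependent drift. Work on a common probability space carrying the independent driving Brownian motions $W,W'$ of $X^{(\delta)}$ and $Y^{(\delta')}$, and set $Z_u:=X^{(\delta)}_u+Y^{(\delta')}_u$, so $Z_0=x+x'$. Adding the two defining SDEs \eqref{EqDef1} gives
\[
dZ_u=\bigl(2\beta_u Z_u+\delta_u+\delta'_u\bigr)\,du+dM_u,\qquad M_u:=2\int_0^u\sqrt{X^{(\delta)}_s}\,dW_s+2\int_0^u\sqrt{Y^{(\delta')}_s}\,dW'_s .
\]
Here $M$ is a continuous local martingale, and since $W$ and $W'$ are independent the cross bracket $\langle W,W'\rangle$ vanishes, so
\[
\langle M\rangle_u=4\int_0^u X^{(\delta)}_s\,ds+4\int_0^u Y^{(\delta')}_s\,ds=4\int_0^u Z_s\,ds .
\]

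The next step is to produce a Brownian motion $B$ with $dM_u=2\sqrt{Z_u}\,dB_u$. On the random set $\{Z_s>0\}$ one sets $dB_s:=(2\sqrt{Z_s})^{-1}\,dM_s$; on $\{Z_s=0\}$ one has $X^{(\delta)}_s=Y^{(\delta')}_s=0$ by Corollary \ref{CoroPositive}, hence $\langle M\rangle$ does not grow there and $\mathbf{1}_{\{Z_s=0\}}\,dM_s=0$, so one completes $B$ on $\{Z_s=0\}$ by an independent Brownian increment (enlarging the space if necessary). A direct computation gives $\langle B\rangle_u=u$, so $B$ is a standard Brownian motion by Lévy's characterization, and $M_u=2\int_0^u\sqrt{Z_s}\,dB_s$. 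Therefore
\[
dZ_u=\bigl(2\beta_u Z_u+(\delta_u+\delta'_u)\bigr)\,du+2\sqrt{Z_u}\,dB_u,\qquad Z_0=x+x',
\]
i.e. $Z$ is a weak solution of \eqref{EqDef1} with dimension $\delta_u+\delta'_u$, drift $\beta_u$ and initial value $x+x'$. Since $\beta$ and $\delta+\delta'$ lie in $D$, pathwise uniqueness holds for this equation by the existence–uniqueness Proposition above; combined with the Yamada–Watanabe principle this yields uniqueness in law, so $Z$ has law $^{\beta}\Qc^{\delta+\delta'}_{x+x'}$. This is precisely \eqref{eqAdd} together with the claimed identity $^{\beta}\Qc^{\delta}_x\oplus{}^{\beta}\Qc^{\delta'}_{x'}={}^{\beta}\Qc^{\delta+\delta'}_{x+x'}$.

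The only genuine subtlety I anticipate is the construction of $B$ at the zeros of $Z$: one must verify carefully that $M$ does not charge $\{Z=0\}$ and carry out the measurable selection / space enlargement cleanly. The remaining ingredients — additivity of brackets via independence of $W,W'$, the SDE bookkeeping, and the appeal to pathwise uniqueness — are routine. I would also note that the boundedness of $\beta$ assumed in the statement is needed only for the integrability (true–martingale) estimates used along the way; the structural part of the argument uses merely $\beta\in D$.
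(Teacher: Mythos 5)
Your proposal is correct and follows essentially the same route as the paper's proof: sum the two SDEs, build a single driving Brownian motion by normalizing the combined martingale on $\{Z>0\}$ and filling in with an independent Brownian motion on $\{Z=0\}$ (the paper's process $\zeta$ is exactly your $B$), apply L\'evy's characterization, and conclude by uniqueness for the resulting SDE. Your remarks on why $M$ does not charge $\{Z=0\}$ and on the Yamada--Watanabe step are slightly more explicit than the paper's, but the argument is the same.
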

\begin{proof}
We proceed similarly to the proof of Theorem 1.2 of \cite{RY} which is done for simple squared Bessel processes. Let $W$ and $W^{'}$ be two independent linear Brownian motion. 
Let $X$ and $Y$ defined by \eqref{EqDef1} as the corresponding two solutions for $(x,\beta,\delta)$ and $(x^{'},\beta,\delta^{'})$ and set H=X+Y. Then for any $u \in [0,T]$,
\begin{eqnarray*}
H_u=x+x^{'}+\int_0^u2\beta_s\left(X_s+Y_s\right)ds+\int_0^u\left(\delta_s+\delta^{'}_s\right)ds+2\int_0^u\left(\sqrt{X_s}dW_s+\sqrt{Y_s}dW^{'}_s\right).
\end{eqnarray*}
Let $W^{''}$ be a third Brownian motion independent of $W$ and $W^{'}$. The process $\zeta$ defined by
\begin{eqnarray*}
\zeta_u=\int_0^u1_{\{H_s>0\}}\frac{\sqrt{X_s}dW_s+\sqrt{Y_s}dW^{'}_s}{\sqrt{H_s}}+\int_0^u1_{\{H_s=0\}}dW^{''}_s
\end{eqnarray*}
is a linear Brownian motion since $\left \langle \zeta,\zeta \right \rangle_u=u$ and we have
\begin{eqnarray*}
H_u&=&(x+x^{'})+\int_0^u2\beta_s\left(X_s+Y_s\right)ds+\int_0^u\left(\delta_s+\delta^{'}_s\right)ds+2\int_0^u\sqrt{H_s}d\zeta_s\\
H_u&=&(x+x^{'})+\int_0^u\left(2\beta_sH_s+\left(\delta_s+\delta^{'}_s\right)\right)ds+2\int_0^u\sqrt{H_s}d\zeta_s,
\end{eqnarray*}
which completes the proof.
\end{proof}

\begin{Proposition}\label{PropoScaling1}
For all $u\in [0,T]$, If X is a $GBESQ^{\beta,\delta}_x$ (i.e. $X_0=x$), then for any $c>0$, the process $\frac{1}{c}X_{cu}$ is again a $GBESQ^{c\beta^{'},\delta^{'}}_{\frac{x}{c}}$ with $\delta^{'}_u=\delta_{cu}$ and $\beta^{'}_u=\beta_{cu}$. In terms of distribution, this reformulates to
$$^\beta \Qc^\delta_x\left(\frac{1}{c}X_{c\, \cdot}\right)=^{c\beta(c\, \cdot)} \Qc^{\delta(c\, \cdot)}_{x/c}.$$
\end{Proposition}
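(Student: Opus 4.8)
The plan is to establish the identity directly at the level of the defining SDE \eqref{EqDef1} and then to read off the law of the rescaled process from the uniqueness statement proved above. Fix $c>0$ and set $Y_u:=\tfrac1c X_{cu}$ (for $u$ in the rescaled time horizon $[0,T/c]$, or on $[0,T]$ after the obvious relabelling). Writing \eqref{EqDef1} in integral form and evaluating at time $cu$ gives
\begin{equation*}
X_{cu}=x+\int_0^{cu}\bigl(2\beta_sX_s+\delta_s\bigr)\,ds+2\int_0^{cu}\sqrt{X_s}\,dW_s .
\end{equation*}

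First I would handle the bounded-variation part: the substitution $s=cv$ turns $\int_0^{cu}(2\beta_sX_s+\delta_s)\,ds$ into $\int_0^{u}(2c\,\beta_{cv}X_{cv}+c\,\delta_{cv})\,dv$. Then I would handle the martingale part via Brownian scaling: introduce $\Wt_v:=\tfrac1{\sqrt c}W_{cv}$, which is again a standard Brownian motion (it has the right quadratic variation, so Lévy's characterization applies with respect to the time-changed filtration $(\Fc_{cv})_{v\ge0}$), and use the time-change rule for stochastic integrals to get $\int_0^{cu}\sqrt{X_s}\,dW_s=\sqrt c\int_0^{u}\sqrt{X_{cv}}\,d\Wt_v$. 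Collecting the two pieces, dividing by $c$ and substituting $X_{cv}=cY_v$ yields
\begin{equation*}
Y_u=\frac{x}{c}+\int_0^u\bigl(2(c\beta_{cv})\,Y_v+\delta_{cv}\bigr)\,dv+2\int_0^u\sqrt{Y_v}\,d\Wt_v ,
\end{equation*}
which is exactly the GBESQ equation \eqref{EqDef1} with drift function $v\mapsto c\beta_{cv}$, dimension $v\mapsto\delta_{cv}$, driving Brownian motion $\Wt$, and initial value $x/c$.

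It then remains to check two routine points. One must verify that the reparametrized coefficients $v\mapsto c\beta_{cv}$ and $v\mapsto\delta_{cv}$ still lie in $D$: this is immediate, since piecewise continuity, measurability and local boundedness are all preserved under the affine change $v\mapsto cv$ and under multiplication by the positive constant $c$. Hence the existence-and-uniqueness proposition above applies to the displayed SDE, so its solution is pathwise unique; and $Y$ is nonnegative, being $\tfrac1c$ times the nonnegative process $X$ (Corollary \ref{CoroPositive}). Therefore $Y$ is a $GBESQ^{c\beta',\delta'}_{x/c}$ with $\delta'_u=\delta_{cu}$ and $\beta'_u=\beta_{cu}$, and the distributional reformulation $^\beta\Qc^\delta_x\bigl(\tfrac1c X_{c\,\cdot}\bigr)={}^{c\beta(c\,\cdot)}\Qc^{\delta(c\,\cdot)}_{x/c}$ follows.

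The only genuinely delicate point is the justification that $\Wt$ is a Brownian motion with respect to a filtration for which $\int_0^u\sqrt{X_{cv}}\,d\Wt_v$ is a bona fide Itô integral and for which the time-change identity holds; this is classical (work with $(\Fc_{cv})_{v\ge0}$ and use the standard time-change theorem for continuous local martingales), and everything else is bookkeeping.
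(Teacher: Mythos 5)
Your proposal is correct and follows essentially the same route as the paper: write the SDE in integral form, change variables in the drift integral, use the Brownian scaling $\Wt_v=\tfrac{1}{\sqrt c}W_{cv}$ in the martingale part, and conclude by uniqueness of the solution. Your additional remarks on the filtration for the time-changed integral and on the coefficients remaining in $D$ are sensible refinements the paper leaves implicit.
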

\begin{proof}
We know that $X_u=x+\int_0^u\delta_sds+\int_0^u2\beta_sX_sds+2\int_0^u\sqrt{X_s}dW_s$. So by a change of variable in this stochastic integral, we obtain
\begin{eqnarray*}
X_{u}&=&x+\int_{0}^{\frac{u}{c}}c\delta_{cv}dv+\int_0^{\frac{u}{c}}2c\beta_{cv}X_{cv}dv+2\int_{0}^{\frac{u}{c}}\sqrt{X_{cv}}dW_{cv}.
\end{eqnarray*}
Hence
\begin{eqnarray*}
\frac{1}{c}X_u&=&\frac{x}{c}+\int_{0}^{\frac{u}{c}}\delta_{cv}dv+\int_0^{\frac{u}{c}}2\beta_{cv}X_{cv}dv+2\int_{0}^{\frac{u}{c}}\frac{1}{c}\sqrt{X_{cv}}dW_{cv},\\
\frac{1}{c}X_u&=&\frac{x}{c}+\int_{0}^{\frac{u}{c}}\delta_{cv}dv+\int_0^{\frac{u}{c}}2\beta_{cv}X_{cv}dv+2\int_{0}^{\frac{u}{c}}\sqrt{\frac{1}{c}X_{cv}}\frac{1}{\sqrt{c}}dW_{cv},\\
\frac{1}{c}X_{cu}&=&\frac{x}{c}+\int_{0}^{u}\delta_{cv}dv+\int_0^u2\beta_{cv}X_{cv}dv+2\int_{0}^{u}\sqrt{\frac{1}{c}X_{cv}}\frac{1}{\sqrt{c}}dW_{cv}.
\end{eqnarray*}
Moreover, we know that if for all $u \in [0,T]$, $W_u$ is a Brownian motion then, for all $c>0$, $\tilde{W}_{u}:=\frac{1}{\sqrt{c}}W_{cu}$ is again a Brownian motion. Hence this stochastic differential equation becomes
\begin{eqnarray*}
\frac{1}{c}dX_{cu}&=&\left(2\beta_{cu}X_{cu}+\delta_{cu}\right)du+2\sqrt{\frac{1}{c}X_{cu}}d\tilde{W}_{u}=\left(2c\beta_{cu}\frac{1}{c}X_{cu}+\delta_{cu}\right)du+2\sqrt{\frac{1}{c}X_{cu}}d\tilde{W}_{u}.
\end{eqnarray*}
Consequently, the result follows from the uniqueness of the solution to this stochastic differential equation.
\end{proof}

\subsubsection{Removal of the drift term.}
We now give a generalization of the change of law property which can be found in \cite{PY}. This proposition allows us to vanish the deterministic drift function. Let $X$ be a $GBESQ^{0,\delta}_x$ and for $t \in \R^+$ define the process $Y$ by $Y_t=\int_0^t \beta_s\sqrt{X_s}dW_s$, where the function $\beta$ is in $D$ and assume moreover that $\beta$ is differentiable. Then the continuous local martingale defined by
$$
Z_t=\Ec(Y_t)=\exp\left\{Y_t-\frac{1}{2}\langle Y,Y\rangle_t \right\}
$$
is equal to
\begin{eqnarray*}
Z_t&=&\exp\left\{\int_0^t \beta_s\sqrt{X_s}dW_s-\frac{1}{2}\int_0^t \beta^2_sX_sds \right\}\\
&=&\exp\left\{\frac{1}{2}\left[\int_0^t 2\beta_s\sqrt{X_s}dW_s+\int_0^t\beta_s\delta_sds-\int_0^t\beta_s\delta_sds-\int_0^t \beta^2_sX_sds \right]\right\}\\
&=&\exp\left\{\frac{1}{2}\left[\int_0^t \beta_s\left(\delta_sds+2\sqrt{X_s}dW_s\right)-\int_0^t\beta_s\delta_sds-\int_0^t \beta^2_sX_sds \right]\right\}.
\end{eqnarray*}
Since $X$ is a  $GBESQ^{0,\delta}_x$, we obtain that
\begin{eqnarray}\label{P1}
Z_t&=&\exp\left\{\frac{1}{2}\left[\int_0^t \beta_sdX_s-\int_0^t\beta_s\delta_sds-\int_0^t \beta^2_sX_sds \right]\right\}.
\end{eqnarray}
By integrating by parts, we have
$$
\beta_tX_t=\beta_0X_0+\int_0^t\beta_sdX_s+\int_0^tX_sd\beta_s=\beta_0X_0+\int_0^t\beta_sdX_s+\int_0^tX_s\beta^{'}_sds
.$$
Hence by substituting this expression in \eqref{P1} we obtain
\begin{eqnarray*}
Z_t&=&\exp\left\{\frac{1}{2}\left[\beta_tX_t-\beta_0X_0-\int_0^tX_s\beta^{'}_sds-\int_0^t\beta_s\delta_sds-\int_0^t \beta^2_sX_sds \right]\right\}\\
&=&\exp\left\{\frac{1}{2}\left[\beta_tX_t-\beta_0X_0-\int_0^t\beta_s\delta_sds-\int_0^t \left(\beta^{'}_s+\beta^2_s\right)X_sds \right]\right\}.
\end{eqnarray*}
If we show that $Z_t$ is a true-martingale then we will can apply the change of law formula given by the following proposition
\begin{Proposition}\label{Girsanov}
\begin{equation}\label{DefZGirsanov}
Z_t:=\frac{^\beta \Qc^\delta_x}{\Qc^\delta_x}=\exp\left\{\frac{1}{2}\left[\beta_tX_t-\beta_0X_0-\int_0^t\beta_s\delta_sds-\int_0^t \left(\beta^{'}_s+\beta^2_s\right)X_sds \right]\right\}.
\end{equation}
\end{Proposition}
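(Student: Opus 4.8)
The plan is to check that the continuous local martingale $Z$ exhibited above is in fact a genuine $\Qc^\delta_x$--martingale, and then to read the statement off from Girsanov's theorem together with uniqueness of the solution of \eqref{EqDef1}. Recall from the computation preceding the statement that, under $\Qc^\delta_x$ (that is, with $X$ a $GBESQ^{0,\delta}_x$), one has $Z=\Ec(Y)$ with $Y_t=\int_0^t\beta_s\sqrt{X_s}\,dW_s$ and $\langle Y\rangle_t=\int_0^t\beta_s^2X_s\,ds$, and that $Z_t$ equals the closed-form expression in \eqref{DefZGirsanov}; so only the true--martingale property and the identification of the new measure remain.

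I expect the true--martingale property to be the only real obstacle, since plain Novikov on $[0,T]$ need not apply: the exponential functional of a squared Bessel process has only \emph{local} (in time) exponential moments. I would therefore proceed by splicing. Fix $B$ with $|\beta_s|\le B$ on $[0,T]$ (possible since $\beta\in D$), so that $\langle Y\rangle_t-\langle Y\rangle_s\le B^2\int_s^tX_u\,du$. By the comparison principle (Proposition~\ref{ThmCompa}) one may replace $\delta$ by a constant $\Delta\ge\sup_{[0,T]}\delta_s$ when bounding $\E\bigl[\exp(\tfrac{B^2}{2}\int_s^tX_u\,du)\bigr]$ from above, and then the classical formula for the Laplace transform of the time--integral of a constant--dimension squared Bessel process (see \cite{RY}, Ch.~XI, or \cite{PY}), applied after conditioning at time $s$ and combined with the (also classical) exponential moments of $X_s$ itself, shows that this quantity is finite whenever $t-s$ is smaller than a threshold depending only on $B$ and $T$. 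Choosing a partition $0=t_0<t_1<\dots<t_N=T$ of mesh below that threshold, Novikov's criterion yields the martingale property of $Z$ on $[0,t_1]$, and its conditional form, applied successively on each $[t_i,t_{i+1}]$ (the relevant conditional exponential moments being a.s.\ finite by the Markov property at $t_i$), propagates $\E[Z_{t_{i+1}}\mid\Fc_{t_i}]=Z_{t_i}$; hence $Z$ is a martingale on $[0,T]$ and $\E[Z_t]=1$ for every $t$. Alternatively, one could invoke the principle that $Z$ is a martingale precisely because the SDE satisfied by $X$ under the candidate new law --- namely \eqref{EqDef1} --- is non--explosive.

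Granting this, set $\tilde\P:=Z_T\cdot\Qc^\delta_x$ on $\Fc_T$. By Girsanov's theorem, $\tilde W_t:=W_t-\langle W,Y\rangle_t=W_t-\int_0^t\beta_s\sqrt{X_s}\,ds$ is an $(\Fc_t)$--Brownian motion under $\tilde\P$, and substituting $dW_s=d\tilde W_s+\beta_s\sqrt{X_s}\,ds$ into $dX_s=\delta_s\,ds+2\sqrt{X_s}\,dW_s$ gives, under $\tilde\P$,
\[
dX_s=\bigl(2\beta_sX_s+\delta_s\bigr)\,ds+2\sqrt{X_s}\,d\tilde W_s,\qquad X_0=x,
\]
so that $X$ is a $GBESQ^{\beta,\delta}_x$ under $\tilde\P$. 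By uniqueness of the solution of \eqref{EqDef1}, the law of $X$ under $\tilde\P$ is $^\beta\Qc^\delta_x$; since $Z_T$ is a functional of the path $(X_s)_{s\le T}$, this is exactly the absolute continuity relation \eqref{DefZGirsanov}, and together with the closed form already established for $Z_t$ this completes the proof.
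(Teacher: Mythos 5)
Your argument is correct in outline, but it takes a genuinely different route from the paper at the one step that actually matters, namely the true--martingale property of $Z$. The paper obtains the closed form \eqref{DefZGirsanov} exactly as you do (It\^o plus integration by parts applied to $\beta_tX_t$), but then disposes of the martingale issue in one line: since $X\ge 0$ and $\beta'_s+\beta_s^2\ge 0$, the local martingale $Z$ is claimed to be \emph{bounded}, hence a true martingale. You instead run a Novikov--splicing argument: bound $\langle Y\rangle$ increments by $B^2\int_s^t X_u\,du$, dominate $X$ by a constant--dimension squared Bessel process via Proposition~\ref{ThmCompa}, use the classical fact that $\int_s^t X_u\,du$ has finite positive exponential moments when $t-s$ is small, and propagate $\E[Z_{t_{i+1}}\mid\Fc_{t_i}]=Z_{t_i}$ along a fine partition. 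This is heavier, but it is also the more defensible of the two: the paper's boundedness claim is hard to justify as stated, because the exponent in \eqref{DefZGirsanov} contains the term $+\tfrac12\beta_tX_t$ with $\beta\ge 0$ and $X_t$ unbounded, and the nonnegative subtracted term $\int_0^t(\beta'_s+\beta_s^2)X_s\,ds$ does not visibly dominate it (that would require $dX_s\le\beta_sX_s\,ds$ pathwise, which fails because of the martingale part of $X$). So your detour through comparison and short--time exponential moments is not redundant caution; it fills a real gap in the paper's one--line justification. The final step --- Girsanov to turn $W$ into $\tilde W=W-\int_0^\cdot\beta_s\sqrt{X_s}\,ds$, substitution into the SDE, and identification of the new law by uniqueness --- matches what the paper intends, and your remark that $Z_T$ is a functional of the path of $X$ is the right observation to convert "law of $X$ under $\tilde\P$" into the stated Radon--Nikodym identity.
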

In fact, since $X^{0,\delta}_s$ is positive for $s \in [0,T]$ and $\beta\in D$, so $\beta$ is locally bounded. We have that $\left(\beta^{'}_s+\beta^2_s\right)\geq 0$ then the local-martingale $Z_t$ is bounded, therefore it is true-martingale.

\subsection{Some law results}

Starting from now and having in mind the previous result, we will always assume that the drift term is zero. We first recall a result from
\cite{Carmona}, Proposition 3.4
\begin{Proposition}\label{PropoCarmona2}
If $\delta\in D$
 and $\mu$ is a positive Radon measure on $[0,a[$,
then
\begin{equation*}
\Qc^{\delta}_{x}\left[\exp\left(-\int_0^a X_ud\mu(u)\right)\right]=\exp\left(\frac{x}{2}\Phi^{'}_\mu(0)\right)\exp\left(\frac{1}{2}\int_0^a\frac{\Phi^{'}_\mu(u)}{\Phi_\mu(u)}\delta_u du\right),
\end{equation*}
where $\Phi_\mu$ is the (well and uniquely defined) solution in the distribution sense of \begin{equation}\label{EqPhiS}
\Phi^{''}=\Phi \mu.
\end{equation} which is positive, non-increasing on $[0,\infty[$ and follows the initial condition $\Phi_\mu(0)=1$.\\
\end{Proposition}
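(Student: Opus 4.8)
\noindent\emph{Proof idea.} The plan is to build an exponential martingale out of the function $\Phi_\mu$ and to read off the identity by evaluating it at the two endpoints of $[0,a]$; this is the martingale form of the Riccati/Feynman--Kac linearisation used for constant dimension in \cite{RY} and \cite{PY}, adapted to a time-dependent dimension. I would first reduce to the case where $\mu$ has a continuous density supported in some $[0,a-\eps]$: approximating a general positive Radon $\mu$ on $[0,a[$ by such measures (e.g. $\mu_n\uparrow\mu$ obtained by restriction and mollification) and using that $\exp(-\int_0^a X_u\,d\mu_n(u))\to\exp(-\int_0^a X_u\,d\mu(u))$ together with the continuous, in fact monotone, dependence of $\Phi_{\mu_n}$ and of $\Phi'_{\mu_n}(0)$ on the data, the general statement follows by dominated/monotone convergence. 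Equivalently, one may keep $\mu$ general and treat $h:=\Phi'_\mu/\Phi_\mu$ as a process of bounded variation via integration by parts; smoothing $\mu$ merely spares the bookkeeping with atoms.

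Next, set $h:=\Phi'_\mu/\Phi_\mu$ and write $d\mu(u)=m(u)\,du$, so that the equation $\Phi''_\mu=\Phi_\mu\,\mu$ reads as the Riccati identity $h'+h^2=m$. Consider
\[
M_t:=\exp\!\left(\tfrac12\,h(t)\,X_t-\int_0^t X_u\,d\mu(u)-\tfrac12\int_0^t h(u)\,\delta_u\,du\right),\qquad t\in[0,a].
\]
Applying It\^o's formula to $\log M_t$, with $dX_u=\delta_u\,du+2\sqrt{X_u}\,dW_u$ and $d\langle X\rangle_u=4X_u\,du$, one checks that the finite-variation terms and the bracket correction combine into a contribution proportional to $\big(h'(t)+h(t)^2-m(t)\big)X_t\,dt$ (up to the overall normalisation of $\mu$ in the statement), which vanishes by the Riccati identity; hence $M$ is a continuous local martingale with $dM_t=M_t\,h(t)\sqrt{X_t}\,dW_t$.

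It remains to upgrade $M$ to a true martingale and to identify its endpoint values. On $[0,a]$ the function $\Phi_\mu$ is continuous and, being positive and non-increasing with $\Phi_\mu(0)=1$, takes values in $[c,1]$ for some $c=c(a)>0$, so $h\le 0$ is bounded; since $X_u\ge 0$ for all $u$ by Corollary \ref{CoroPositive}, the first two terms in the exponent of $M_t$ are non-positive, and the third is bounded because $\delta\in D$ is locally bounded, whence $\int_0^a\delta_u\,du<\infty$. Thus $0\le M_t\le C$ for a deterministic constant $C$ and all $t\in[0,a]$, so $M$ is a bounded, hence true, martingale and $\E[M_a]=M_0$. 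Here $M_0=\exp\!\big(\tfrac{x}{2}\,h(0)\big)=\exp\!\big(\tfrac{x}{2}\,\Phi'_\mu(0)\big)$ because $\Phi_\mu(0)=1$; and since $\mu$ is carried by $[0,a[$, $\Phi_\mu$ is affine, and hence (being positive and non-increasing) constant, on $[a,\infty[$, so $\Phi'_\mu(a)=0$ and $h(a)=0$, giving
\[
M_a=\exp\!\left(-\int_0^a X_u\,d\mu(u)-\tfrac12\int_0^a \frac{\Phi'_\mu(u)}{\Phi_\mu(u)}\,\delta_u\,du\right).
\]
Taking expectations and rearranging yields the claimed formula. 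The product form of the answer is moreover consistent with the additivity $\Qc^\delta_x=\Qc^0_x\oplus\Qc^\delta_0$ of Proposition \ref{PropoAdditivite}, though the time-dependent part of the dimension still needs the computation above.

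The only genuinely delicate point, I expect, is making the It\^o step fully rigorous when $\mu$ is singular --- there $\Phi'_\mu$ is merely of bounded variation and $\Phi''_\mu$ a measure --- and, relatedly, checking that the smoothing approximation converges as required, namely uniform convergence $\Phi_{\mu_n}\to\Phi_\mu$ on $[0,a]$ together with $\Phi'_{\mu_n}(0)\to\Phi'_\mu(0)$, which follow from standard comparison results for the linear equation $\Phi''=\Phi\mu$. The drift cancellation, the boundedness estimate, and the vanishing of $\Phi'_\mu(a)$ are routine.
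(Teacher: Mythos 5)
The paper does not prove this proposition at all: it is quoted verbatim from Carmona's thesis (Proposition 3.4 of \cite{Carmona}), so there is no in-paper argument to compare yours against. Your exponential-martingale/Riccati proof is the standard route to this kind of statement (it is essentially how the constant-$\delta$ case is proved in Ch.~XI of \cite{RY}), and its skeleton is sound: the identification $M_0=\exp(\frac{x}{2}\Phi'_\mu(0))$, the observation that $\Phi'_\mu(a)=0$ because $\Phi_\mu$ is affine, positive and non-increasing beyond the support of $\mu$, the boundedness of $M$ via $h\le 0$, $X\ge 0$ and local boundedness of $\delta$, and the mollification step for singular $\mu$ are all correct and adequately justified.

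The one point you should not leave as a parenthetical hedge is the normalisation. With $L_t=\log M_t$ and $dX_u=\delta_u\,du+2\sqrt{X_u}\,dW_u$, the quadratic variation term is $\frac12 d\langle L\rangle_t=\frac12 h(t)^2X_t\,dt$, so the drift of $M_t/M_t$ is $\bigl(\tfrac12 h'+\tfrac12 h^2-m\bigr)X_t\,dt=\tfrac12\bigl(h'+h^2-2m\bigr)X_t\,dt$. This vanishes only if $h'+h^2=2m$, i.e.\ $\Phi''_\mu=2\Phi_\mu\mu$, not $\Phi''_\mu=\Phi_\mu\mu$ as in \eqref{EqPhiS}; with the equation as literally stated your $M$ has residual drift $-\tfrac12 m X_t\,dt$ and is not a local martingale. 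The discrepancy is in the paper's statement rather than in your argument: \eqref{EqSDEPhi} in Remark \ref{LaplGEBSQ} (take $\beta=0$) reads $\Phi''=2\Phi\mu$, and the test-case computation leading to Corollary \ref{Laplace} (the $A_{\mu_t}=e^{-\lambda/(1+2\lambda t)}$ value, equivalently $\Phi'_\mu(0)=-2\lambda/(1+2\lambda t)$ for $\mu=\lambda\epsilon_t$) is consistent only with that convention; RY state the result with $\exp(-\tfrac12\int X_s\,\mu(ds))$ on the left precisely to keep $\Phi''=\Phi\mu$. So your proof is correct once you fix the convention --- either put the factor $\tfrac12$ in the exponential or the factor $2$ in the ODE --- but you should state explicitly which version you are proving rather than waving at ``the overall normalisation of $\mu$''.
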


\begin{Remark}\label{LaplGEBSQ}
\begin{enumerate}
\item
With straightforward calculation, we can obtain a extended version of this result with non zero drift term, namely 
\begin{equation*}
{}^\beta \Qc^{\delta}_{x}\left[\exp\left(-\int_0^a X_ud\mu(u)\right)\right]=\exp\left(\frac{x}{2}\Phi^{'}_\mu(0)\right)\exp\left(\frac{1}{2}\int_0^a\frac{\Phi^{'}_\mu(u)}{\Phi_\mu(u)}\delta_u du\right),
\end{equation*}
where $\Phi_\mu$ the unique solution of the differential equation, in the distribution sense,
\begin{equation}\label{EqSDEPhi}
d\mu_u=\frac{\Phi^{''}_u}{2\Phi_u}du+\beta_u\frac{\Phi^{'}_u}{\Phi_u}du
\end{equation}
which is positive, non-increasing on $[0,\infty[$ and follows the initial condition $\Phi_\mu(0)=1$.\\
\item All these results could be extended under proper assumptions to measures without compact support. As our goal is to study the Bessel bridge, we feel free to do this restriction.
\end{enumerate}
\end{Remark}

The previous form gives a precise explicit expression for the Laplace transform of additive functional of the generalized squared Bessel. However in practical cases this explicit form is rather difficult to use. We give a more abstract form which, surprisingly, suits better our goals, and allows us to introduce some notations. 
\begin{Proposition}
\label{riesz}
Let $X^\delta$ be a generalized squared Bessel process with dimension $\delta$ and started in $x$.  If $\delta \in D$ and $\mu$ is a positive Radon measure with compact support on $\R^+$, then there exist a constant $A_{\mu}$ and a function $B_{\mu}:\R^+\rightarrow \R^+$ such that 
\begin{equation*}
\Qc^{\delta}_{x}\left[\exp\left(-\int_0^\infty X_ud\mu(u)\right)\right]=A_{\mu}^x \exp\left\{\int_{0}^{\infty}B_{\mu(u)}\delta_{u}du\right\}.
\end{equation*}
\end{Proposition}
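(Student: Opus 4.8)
The plan is to reverse-engineer the constants $A_\mu$ and $B_\mu$ directly from the explicit formula in Proposition~\ref{PropoCarmona2}. That proposition already gives
\[
\Qc^{\delta}_{x}\left[\exp\left(-\int_0^\infty X_ud\mu(u)\right)\right]=\exp\left(\frac{x}{2}\Phi^{'}_\mu(0)\right)\exp\left(\frac{1}{2}\int_0^\infty\frac{\Phi^{'}_\mu(u)}{\Phi_\mu(u)}\delta_u\,du\right),
\]
where $\Phi_\mu$ is the positive, non-increasing solution of $\Phi''=\Phi\mu$ with $\Phi_\mu(0)=1$. So the natural candidates are
\[
A_\mu:=\exp\left(\tfrac12\Phi'_\mu(0)\right),\qquad B_{\mu}(u):=\frac{1}{2}\,\frac{\Phi'_\mu(u)}{\Phi_\mu(u)},
\]
and the identity to be proven is then literally Proposition~\ref{PropoCarmona2} rewritten. (I will write $B_{\mu(u)}$ to match the statement's notation, i.e. $B_\mu$ evaluated at $u$.)

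First I would check that these objects are well-defined and of the claimed type. Since $\mu$ has compact support, say in $[0,a]$, the function $\Phi_\mu$ is the unique positive non-increasing solution of $\Phi''=\Phi\mu$ with $\Phi_\mu(0)=1$; on $[a,\infty)$ it is affine and, being non-increasing and positive, it is constant there with value $\Phi_\mu(a)\in(0,1]$ (a strictly decreasing affine function would eventually become negative). Hence $\Phi'_\mu$ is bounded, $\Phi_\mu$ is bounded below away from $0$, so $B_\mu(u)=\tfrac12\Phi'_\mu(u)/\Phi_\mu(u)$ is a well-defined bounded function that vanishes for $u\ge a$; in particular the integral $\int_0^\infty B_{\mu(u)}\delta_u\,du$ is really an integral over the compact set $[0,a]$ and is finite because $\delta\in D$ is locally bounded. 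Also $\Phi'_\mu(0)\le 0$ so $A_\mu\in(0,1]$, and $A_\mu^x$ makes sense for every $x\ge 0$. With these remarks the right-hand side of the proposition equals exactly the right-hand side of Proposition~\ref{PropoCarmona2}, and the equality of Laplace transforms is immediate.

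There is essentially no obstacle here beyond bookkeeping: the real content was already supplied by Carmona's result quoted as Proposition~\ref{PropoCarmona2}, and this proposition is a repackaging into a form where the $x$-dependence is isolated as a base raised to the power $x$ and the $\delta$-dependence is isolated as an integral linear in $\delta$. The only point requiring a word of care is the claim that $B_\mu$ maps into $\R^+$: since $\Phi_\mu$ is non-increasing and positive, $\Phi'_\mu\le 0$, so strictly speaking $B_\mu\le 0$; I would resolve this either by replacing ``$B_\mu:\R^+\to\R^+$'' with ``$B_\mu:\R^+\to\R$'' (the honest statement), or, if one insists on positivity, by absorbing the sign and writing $A_\mu^x\exp\{-\int_0^\infty |B_{\mu(u)}|\delta_u\,du\}$. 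Either way the substance of the argument — reading off $A_\mu$ and $B_\mu$ from the ODE solution $\Phi_\mu$ — is unchanged, and the proof is complete.
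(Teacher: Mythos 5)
Your proposal is correct and matches the paper's approach exactly: the paper simply declares the result ``a trivial corollary of Proposition \ref{PropoCarmona2}'', and you have supplied precisely the bookkeeping (reading off $A_\mu=\exp\{\tfrac12\Phi'_\mu(0)\}$ and $B_\mu(u)=\tfrac12\Phi'_\mu(u)/\Phi_\mu(u)$) that makes this explicit. Your observation that $B_\mu$ is in fact nonpositive, so the codomain should read $\R$ rather than $\R^+$, is a valid correction of a slip in the statement.
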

\begin{proof}
It is a trivial corollary of Proposition \ref{PropoCarmona2}.

\end{proof}
\begin{Remark}
Note that, by taking $\delta$ constant, the $A_{\mu}$ term is the same as in Corollary 1.3 of page 440 of \cite{RY}.
\end{Remark}

We can apply this formula to measures of the form $\mu_{t}:=\lambda \epsilon_{t}$, where $\epsilon_{t}$ stands for the Dirac measure in $t$. Our strategy to compute $A_{\mu_{t}}$ and $B_{\mu_{t}}$ will be to take well-chosen test values and functions for $x$ and $\delta$.
Taking $\delta=1$,
$A_{\mu_{t}}$ was computed in \cite{RY}, page 441, and is equal to
\begin{equation}\label{EqA}
A_{\mu_{t}}=e^{-\frac{\lambda}{1+2\lambda t}}.
\end{equation}
To get access to $B_{\mu_{t}}$, we take $x=0$ and use test functions of the form $\delta=\chi_{[b,\infty]}$ where $\chi$ denotes the indicator function. For such $\delta$ and $x=0$, the generalized squared Bessel process is easily described: indeed it is identically equal to $0$ for $0\le t\le b$ and then evolutes like a squared Bessel process with dimension $1$ started at $0$, that is, the square of a standard Brownian motion.
Therefore straightforward computations imply
\begin{equation}\Qc^{\chi_{[b,\infty]}}_{0}\left[\exp\left\{-\int X_{u}d\mu_{t}\right\}\right]=E_{0}\left[e^{-\lambda B_{(t-b)^+}^2}\right]
=\frac{1}{\sqrt{1+2\lambda(t-b)^+}}.
\end{equation}
(for the last equality see for example p441 of \cite{RY}.)
On the other hand this is also equal to $\exp\left\{\int_{b}^{\infty}B\mu(u)\right\}$. Taking the log and then the derivative with respect to $b$, then bringing everything together, we get the following result :
\begin{Corollary}\label{Laplace} Under the statement of the Proposition \ref{riesz}, we have
$$\Qc^{\delta}_{x}\left[e^{-\lambda X_{t}}\right]=\exp\left\{-\frac{\lambda x}{1+2\lambda t}\right\}\exp\left\{-\int_{0}^t \frac{\lambda \delta_{u}}{1+2\lambda (t-u)}du\right\}.$$

\end{Corollary}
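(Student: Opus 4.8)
The plan is to specialize Proposition \ref{riesz} to the single-point measure $\mu_t := \lambda\epsilon_t$, for which $\int_0^\infty X_u\,d\mu_t(u) = \lambda X_t$, so that its left-hand side is precisely $\Qc^\delta_x\!\left[e^{-\lambda X_t}\right]$. That proposition then factors this quantity as $A_{\mu_t}^x\,\exp\!\left\{\int_0^\infty B_{\mu_t}(u)\,\delta_u\,du\right\}$, and the entire task reduces to identifying the scalar $A_{\mu_t}$ and the function $u\mapsto B_{\mu_t}(u)$ explicitly.

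For $A_{\mu_t}$ I would simply quote the classical computation recalled in \eqref{EqA} (Revuz--Yor, p.~441), giving $A_{\mu_t}=e^{-\lambda/(1+2\lambda t)}$; raising to the power $x$ produces the first exponential factor in the statement. To pin down $B_{\mu_t}$, the idea is to test the factorization against well-chosen data. Taking $x=0$ and $\delta=\chi_{[b,\infty]}$ for arbitrary $b\ge 0$, the corresponding process is frozen at $0$ on $[0,b]$ and thereafter a $\mathrm{BESQ}^1_0$, i.e. the square of a standard Brownian motion, so that $\Qc^{\chi_{[b,\infty]}}_0\!\left[e^{-\lambda X_t}\right]=E_0\!\left[e^{-\lambda B_{(t-b)^+}^2}\right]=(1+2\lambda(t-b)^+)^{-1/2}$. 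Proposition \ref{riesz} evaluates the same expectation as $\exp\!\left\{\int_b^\infty B_{\mu_t}(u)\,du\right\}$.

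Equating the two expressions, taking logarithms, and differentiating in $b$ on the region $b<t$ then yields $B_{\mu_t}(b)=-\lambda/(1+2\lambda(t-b))$, while on $b>t$ one reads off $B_{\mu_t}\equiv 0$. Substituting this formula for $B_{\mu_t}$ back into the factorization of Proposition \ref{riesz} gives the second exponential $\exp\!\left\{-\int_0^t \lambda\delta_u/(1+2\lambda(t-u))\,du\right\}$, which together with the $A_{\mu_t}$ term is exactly the claimed identity.

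The only mildly delicate points are that the representation in Proposition \ref{riesz} must hold simultaneously for all the test dimensions $\chi_{[b,\infty]}$ with one and the same function $B_{\mu_t}$ not depending on $\delta$ --- but this is precisely what that proposition asserts, so it may be invoked directly --- and that the differentiation in $b$ is legitimate: here $b\mapsto-\tfrac12\log(1+2\lambda(t-b)^+)$ is Lipschitz on compact sets and smooth away from the single point $b=t$, so it is absolutely continuous and the lone kink at $b=t$ sits on a null set and is thus harmless. I expect this differentiation/regularity bookkeeping to be the main (but minor) obstacle; the rest is assembly of the pieces already set up in the paragraph preceding the statement.
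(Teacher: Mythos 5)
Your proposal is correct and follows essentially the same route as the paper: the text immediately preceding the corollary derives it exactly by specializing Proposition \ref{riesz} to $\mu_t=\lambda\epsilon_t$, reading off $A_{\mu_t}$ from \eqref{EqA}, and identifying $B_{\mu_t}$ via the test dimensions $\delta=\chi_{[b,\infty]}$ with $x=0$, followed by taking logarithms and differentiating in $b$. Your added remark on the legitimacy of that differentiation is a small bonus the paper leaves implicit.
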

This is the Laplace transform of the transition Kernel of $\Qc^{\delta}_{x}$. In the case of constant $\delta$ an inverse to this transform can be computed and therefore an expression of the transition Kernel, it is however not generally possible for any $\delta$. We give in Remark \ref{RemarkLaplaceSimplifier} a simplification of this formula in the case of piecewise constant $\delta$, and in section \ref{SecGenerating} some hints on how to overcome this issue in numerical simulations.

\section{The Levy-It\^o Representation of GBESQ}

We now turn to the so-called Levy-It\^o representation. For a motivation of such representation we refer to \S 4 of \cite{PY}.

\begin{Theorem}\label{THM23}
\begin{enumerate}
\item There exists a unique measure $M$ on $C$ such that for every random variable $I$ on $C$ of the form $I=\int X_{t} \mu(dt),$ with $\mu$ a positive Radon measure with compact support on $(0,\infty)$ and every $\alpha>0,$ denoting by $M_{u}$ the distribution of $(X_{(t-u)^+}, t\ge 0)$ for $(X_{t}, t\ge 0)$ distributed according to $M$ we get
\begin{equation}
\Qc^\delta_{x}\left[e^{-\alpha I}\right]=\exp\left\{\left(xM+\int_{0}^\infty \delta_{u}M_{u} du\right)\left(e^{-\alpha I} -1\right)\right\}.\label{LK}
\end{equation}
Moreover $M$ corresponds to the $^0 M$ of Theorem 4.1 of \cite{PY}. 
\item For any $\delta\in D,$ there exists a $R^+\rightarrow C$ process $(Y_{x}^\delta, x\ge 0)$ such that
\begin{equation}\label{somme}
Y_{x}^{\delta}=Y_{x}^0+Y_{0}^{\delta}=: Y_{x}+ Y^{\delta},
\end{equation}
where $(Y_{x}, x\ge 0)$ and $(Y^\delta, \delta \in D)$ are independent, $Y_{x}$ is as in Theorem 4.1. of \cite{PY} and $Y^\delta$ is distributed according to $\Qc_0^{\delta}.$

\end{enumerate}
\end{Theorem}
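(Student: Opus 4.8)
The plan is to take for $M$ the measure $^0M$ of \cite{PY}, Theorem~4.1, which already provides \eqref{LK} when $\delta$ is constant, and to bootstrap to an arbitrary $\delta\in D$ using the additivity property (Proposition~\ref{PropoAdditivite}) together with the elementary remark, made just before Corollary~\ref{Laplace}, that when $\delta=c\,\chi_{[b,\infty)}$ and $x=0$ the process is identically $0$ on $[0,b]$ and then a classical $\mathrm{BES}Q^c_0$. Writing $\theta_b\colon C\to C$ for the delay map $(\theta_bw)(t)=w((t-b)^+)$, this remark says precisely that $\Qc^{c\chi_{[b,\infty)}}_0=(\theta_b)_*\Qc^c_0$, and in the notation of the statement $M_u=(\theta_u)_*M$. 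By Proposition~\ref{PropoAdditivite} with one of the two dimensions equal to $0$ one has $\Qc^\delta_x=\Qc^0_x\oplus\Qc^\delta_0$, so it suffices to handle the ``$x$-part'' and the ``$\delta$-part'' separately; the $x$-part is exactly \cite{PY} for dimension $0$, giving $\log\Qc^0_x[e^{-\alpha I}]=x\,M(e^{-\alpha I}-1)$.

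For the $\delta$-part, first take $\delta=c\,\chi_{[b,\infty)}$, $x=0$. Since $\Qc^{c\chi_{[b,\infty)}}_0=(\theta_b)_*\Qc^c_0$ and the path vanishes at $0$ under $\Qc^c_0$, the functional $I\circ\theta_b$ equals $\Qc^c_0$-a.s. a functional of the same type $\int X_s\,\nu(ds)$, with $\nu$ the image of $\mu|_{(b,\infty)}$ under $s\mapsto s-b$; applying \eqref{LK} for the constant dimension $c$ (i.e.\ \cite{PY}) and using $\theta_b\circ\theta_s=\theta_{b+s}$ — together with the fact that $^0M$, like the classical excursion and entrance laws, is carried by paths vanishing at $0$, so that the delays compose cleanly against $\mu$ — one obtains $\log\Qc^{c\chi_{[b,\infty)}}_0[e^{-\alpha I}]=c\int_b^\infty M_v(e^{-\alpha I}-1)\,dv$. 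From $c\,\chi_{[b,b')}=c\,\chi_{[b,\infty)}-c\,\chi_{[b',\infty)}$ and additivity, $\log\Qc^{c\chi_{[b,b')}}_0[e^{-\alpha I}]$ is the difference of the two corresponding integrals, namely $c\int_b^{b'}M_v(e^{-\alpha I}-1)\,dv$. Summing over a finite partition (Proposition~\ref{PropoAdditivite} again) gives \eqref{LK} for every piecewise constant $\delta$ with $x=0$, and approximating a general $\delta\in D$ by piecewise constant functions and passing to the limit on both sides — the left side being continuous in $\delta$, e.g.\ by the explicit formula of Propositions~\ref{PropoCarmona2}--\ref{riesz}, and $v\mapsto M_v(e^{-\alpha I}-1)$ being bounded with support contained in $\mathrm{supp}\,\mu$ — finishes the case $x=0$; adding back the $x$-part via $\Qc^\delta_x=\Qc^0_x\oplus\Qc^\delta_0$ yields \eqref{LK} in general. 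Uniqueness comes from the case $\delta\equiv0$: there $M(e^{-\alpha I}-1)=\tfrac1x\log\Qc^0_x[e^{-\alpha I}]$ is prescribed for all $\alpha>0$ and all $\mu$, and since the functionals $w\mapsto e^{-\alpha\int w\,d\mu}-1$, restricted to the finite-mass sets $\{w(t_0)>\eta\}$ (finite by Corollary~\ref{Laplace}), determine a $\sigma$-finite measure on $C$ by the usual uniqueness of L\'evy measures (reduction to finite-dimensional Laplace transforms), $M$ is determined; this is $^0M$ of \cite{PY}.

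For the second part, note that \eqref{LK} exhibits $\Qc^\delta_x$ as the law of a Poisson superposition of paths with intensity $\Lambda^\delta_x:=x\,M+\int_0^\infty\delta_uM_u\,du$ on $C$: this measure satisfies $\int_C\bigl(1\wedge\sup_{t\le T}w(t)\bigr)\,\Lambda^\delta_x(dw)<\infty$ for every $T$, so $\sum w$ converges locally uniformly almost surely, and the resulting sum has Laplace functional $\exp\{\Lambda^\delta_x(e^{-\alpha I}-1)\}=\Qc^\delta_x[e^{-\alpha I}]$, hence law $\Qc^\delta_x$ (Laplace functionals of the form $e^{-\alpha\int X\,d\mu}$ determine laws on $C$). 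To obtain the families coherently I would then take two independent Poisson random measures: $\Pi$ on $\R^+\times C$ with intensity $ds\otimes M(dw)$, and $\Pi'$ on $\R^+\times C\times\R^+$ with intensity $du\otimes M(dw)\otimes dr$, and set $Y_x:=\int_{[0,x]\times C}w\,\Pi(ds,dw)$, $Y^\delta:=\int \1_{\{r\le\delta_u\}}\,(\theta_uw)\,\Pi'(du,dw,dr)$ and $Y^\delta_x:=Y_x+Y^\delta$. Then $(Y_x)_{x\ge0}$ and $(Y^\delta)_{\delta\in D}$ are independent by construction; $Y_x$ is a Poisson superposition with intensity $x\,M=x\,{}^0M$, hence the process of \cite{PY}, Theorem~4.1; $Y^\delta$ has Laplace functional $\exp\{\int_0^\infty\delta_uM_u(e^{-\alpha I}-1)\,du\}=\Qc^\delta_0[e^{-\alpha I}]$ by the case $x=0$ of part~(1), so $Y^\delta$ is distributed according to $\Qc^\delta_0$; and $Y^\delta_x$ has Laplace functional $\Qc^\delta_x[e^{-\alpha I}]$ by part~(1), which also gives \eqref{somme}.

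The main obstacle is not any single computation but the infinite-dimensional bookkeeping: checking that $\Lambda^\delta_x$ is a genuine L\'evy-type measure on the path space (so the Poisson superposition converges in $C$), that families of Laplace functionals of the restricted form actually separate $\sigma$-finite measures on $C$ (needed for uniqueness in part~(1) and for identifying the laws in part~(2)), and that the delay maps $\theta_u$ interact correctly with a general $\mu$ — which is exactly where one uses that $^0M$ is carried by paths vanishing at $0$. Modulo these functional-analytic points, everything reduces, via Proposition~\ref{PropoAdditivite} and the delay remark preceding Corollary~\ref{Laplace}, to faithfully importing Theorem~4.1 of \cite{PY}.
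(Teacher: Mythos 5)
Your part (1) is correct and follows the same skeleton as the paper's proof: both sides of \eqref{LK} are additive in $\delta$ (and in $x$) and continuous in $\delta$, so everything reduces to indicator test dimensions, where Theorem 4.1 of \cite{PY} is imported. The difference lies in how the test case is treated. The paper takes $\delta=d\chi_{[a,b]}$, applies the Markov property at time $b$, invokes \cite{PY} twice, and then has to verify $M_u(e^{-\alpha I_1'}-1)=M_u(e^{-\alpha I}-1)$ by running the computation backward; you instead take $\delta=c\chi_{[b,\infty)}$, identify $\Qc^{c\chi_{[b,\infty)}}_0$ as the pushforward of $\Qc^c_0$ under the delay map $\theta_b$, use $\theta_b\circ\theta_u=\theta_{b+u}$ together with the fact that $M$ only charges paths vanishing at $0$, and recover bounded intervals by subtracting logarithms. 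This is a cleaner route through the test case (one application of \cite{PY} instead of a forward--backward Markov argument), and you also make the uniqueness of $M$ explicit, which the paper essentially leaves to the reader.

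For part (2), however, you take on far more than the statement requires, and the extra work contains a genuine gap. The paper's proof is one line: take $Y_x$ as in \cite{PY} and let $Y^\delta$ be \emph{any} independent process with law $\Qc^\delta_0$; Proposition \ref{PropoAdditivite} then yields \eqref{somme}. Your construction of $Y^\delta$ as a Poisson integral against $\Pi'$ with intensity $du\otimes M(dw)\otimes dr$ is precisely the Poisson-point-process representation of the $\delta$-part that Remark \ref{remark} (point 3) explicitly says is \emph{not} available in this setting and is believed to be a difficult question. Concretely, since $M(X_s>0)=\tfrac{1}{2s}$, the intensity $\int_0^t\delta_u M_u\,du$ puts infinite mass on $\{w:\,w(t)>0\}$ whenever $\delta$ is positive near $t$, so your superposition is an infinite sum at every such time; the bound $\int\delta_u M_u(X_t)\,du=\int_0^t\delta_u\,du<\infty$ does give a.s. convergence of each one-dimensional marginal, but the a.s. continuity in $t$ of the summed path --- which you need both to land in $C$ and to make \eqref{somme} a pathwise identity with a $\Qc^\delta_0$-distributed summand --- is exactly the point you defer to ``infinite-dimensional bookkeeping,'' and it is not routine. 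The theorem as stated does not ask for this representation: replace your $\Pi'$-construction by the paper's trivial choice of $Y^\delta$ and your proof closes.
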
 
\begin{Remark}\label{remark}

\begin{enumerate}\item
This theorem should be understood as a L\'evy-Khinchine formula on $C$. For more on this we refer to \S4  of \cite{PY}.
We also refer to \S3 of \cite{PY} for background on the measure $M$. In short, $M$ is the excursion law associated to the zero diffusion $\Qc_x^0$ renormalized so that its entrance law is given by 
$$M(X_t\in dx)=\frac{dx}{(2t)^2}e^{-\frac{x}{2t}}.$$
In particular the associated transition kernel starting for any strictly positive time is the same as the one of the squared Bessel process of dimension $0$.

\item We also recall the expression of $Y_x$ from \cite{PY}. Namely, 
$$Y_x(0)=x,\; Y_x=\sum_{v\le x} \Delta_{v} \text{ on }(0,\infty),$$
where $\Delta_v$ is a Poisson point process on $C$ with Poisson measure $M$.
Note that, due to the form of $M$, the sum in the last expression is actually infinite, however, for every finite $t$, the set of $\Delta_v$ such that $\Delta_v(t)>0$ is finite and its cardinality follows a Poisson distribution of parameter $\frac{x}{2t}.$ 
\item In \cite{PY}, the representation in terms of a Poisson point process is also valid for $\delta$. However we do not have such a representation in our setting and believe it is a difficult question. It is however not needed for our purpose.
\end{enumerate}
\end{Remark}
\begin{proof}[Proof of Theorem \ref{THM23}:]
First, we would like to prove \eqref{LK}.
It is clear that both terms are multiplicative in $\delta$ and positive. The form in Proposition \ref{PropoCarmona2} also easily imply that the left hand side is continuous in $\delta$ with respect to the infinite norm. The continuity of the right hand side follows easily once  one knows that $\int_{0}^\infty \delta_{u}M_{u} du\left(e^{-\alpha I} -1\right)$ is finite, which is easily deduced from Theorem 4.1. of \cite{PY}.  Hence we just need to prove it for well chosen test function. We take for test functions the functions that are constant on an interval and equal to zero outside. It is easy to see that such function form a generating family of $D$. Indeed it is rather easy to see that any constant by parts can be constructed as a sum of such functions.
The fact that constant by part functions are dense in $D$ is classical.

More precisely, let $\delta:=d \chi_{[a,b]}$. We can easily see that for such $\delta$, $X_{t}$ is identically zero on $[0,a]$ then evolves like a $d$-dimensional squared Bessel process on $[a,b]$, then like a $0$-dimensional squared Bessel process. It is the clear that by a shifting of time we can reduce to the case $a=0$. The Markov property implies:
\begin{equation*}
\Qc_{x}^{\delta}\left[e^{-\alpha I}\right]=\Qc_{x}^d \left[\exp\left\{-\alpha\int_{0}^b X_{t}d\mu(t)\right\} \E_{X_{b}}^0\left[\exp\left\{-\alpha\int_{0}^\infty X_{t}d\mu(t+b)\right\}\right]\right].
\end{equation*}
We write $I_{1}:=\int_{0}^b  X_{t}d\mu(t)$ and $I_{2}=\int_{0}^\infty X_{t}d\mu(t+b)$. Then the previous expression can be written 
\begin{equation}
\Qc_{x}^{\delta}\left[e^{-\alpha I}\right]=\Qc_{x}^d \left[e^{-\alpha I_{1}} \Qc_{X_{b}}^0 \left[e^{-\alpha I_{2}}\right]\right].
\end{equation} 
Theorem 4.1 of \cite{PY} implies that the previous expression is equal to
$$\Qc_{x}^d \left[e^{-\alpha (I_{1}+X_bM(e^{-\alpha I_2}-1))}\right].$$
Denoting by $I'_1$ the function on $C$ defined by 
$$
I'_1((X_t)_{t\ge 0})=I_1((X_t)_{t\ge 0})+X_b M(e^{-\alpha I_2}-1)
$$ 
and applying once again Theorem 4.1. of \cite{PY}, we get
$$\Qc^{\delta}_x\left[e^{-\alpha I}\right]=\exp\left\{\left( xM+ d\int_0^\infty M_u du \right)(e^{-\alpha I'_1}-1)\right\}.$$
It is clear from the previous form of $I'_1$ that for every $u>b,$ $M_u(e^{-\alpha I'_1}-1)=0$, therefore the last expression can be reduced to
$$\Qc^{\delta}_x\left[e^{-\alpha I}\right]=\exp\left\{\left( xM+ d\int_0^b M_u du \right)(e^{-\alpha I'_1}-1)\right\}.$$
We now want to prove that the above expression equals 
$$
\exp\left\{\left( xM+ d\int_0^b M_u du \right)(e^{-\alpha I}-1)\right\}.
$$ 
For this we only need to prove that for every $0\le u< b,$
$$
M_u(e^{-\alpha I'_1}-1)=M_u(e^{-\alpha I}-1).
$$
We get this basically by doing the previous computations backward. Indeed
\begin{eqnarray*}M_u(e^{-\alpha I'_1}-1))&=&M_u\left[\exp\left\{-\alpha\int_0^b X_t d\mu(t)\right\}\exp\{X_b M(e^{-\alpha I_2}-1\}\}-1\right]\\
&=&M_u\left[\exp\left\{-\alpha\int_0^b X_t d\mu(t)\right\}\Qc_{X_{b}}^0\left[\exp\{-\alpha\int_{0}^\infty X_{t}d\mu(t+b)\}\right]-1\right]\\
&=&M_u\left[\exp\left\{-\alpha\int_0^\infty X_t d\mu(t)\right\}-1\right],
\end{eqnarray*}
where we used in the second line Theorem 4.1 of \cite{PY} and in the third line the Markov property for $M_u$ and the fact that, by definition of $M$, for $u<b,$ the transition kernel of $M_u$ for times greater than $d$ is equal to the transition kernel of $\Qc_x^0$.

Now we need to show \eqref{somme}: it is trivial, just by taking $Y_x$ as in \cite{PY} and any independent $\delta-$squared Bessel process started at zero for $Y^\delta$. Then Proposition \ref{PropoAdditivite} gives the result.
\end{proof}

\section{Bessel bridge Decomposition}

We now turn to our main interest, namely the squared Bessel bridge with non-constant dimension.
We define for $\delta\in D$ the law \begin{equation}\Qc^{\delta,t}_{x\to y} :=\Qc^{\delta}(.|X_0=x,X_t=y).\label{defbridge}\end{equation}
In all the result we are going to mention, we are only interested in the trajectory up to time $t\in [0,T]$ of the Bessel bridge. So with a slight abuse of notation we will consider $\Qc^{\delta,t}_{x\rightarrow y}$ as a probability measure on $C([0,t],R^+).$
We will  also make use of the time reversed Bessel bridge, namely we call $\hat{\mathcal{Q}}$ the $\mathcal{Q}$ distribution of $(X(t-s))_{0\le s\le t}$. In particular it is easily seen that we have
\begin{Lemma} Let $X$ be a $GBESQ^{\delta,x}$ such that for $t\geq 0$, $X_t=y$, then we have that
$$\hat{\mathcal{Q}}^{\delta,t}_{x\to y}=\mathcal{Q}^{\hat{\delta},t}_{y\to x},$$
where $\hat{\delta}(s)=\delta(t-s).$
\end{Lemma}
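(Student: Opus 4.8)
The plan is to identify the two laws through their finite–dimensional distributions, using that a generalized squared Bessel bridge is an (inhomogeneous) Markov process whose joint laws can be written explicitly in terms of the transition kernel $p^{\delta}_{s,u}(a,\cdot)$ of the free process $GBESQ^{0,\delta}$ (the law of $X_{u}$ given $X_{s}=a$). By the usual disintegration of a bridge and the Markov property, for $0=t_{0}<t_{1}<\dots<t_{n}<t_{n+1}=t$,
\begin{equation*}
\Qc^{\delta,t}_{x\to y}\bigl(X_{t_{1}}\in d\xi_{1},\dots,X_{t_{n}}\in d\xi_{n}\bigr)=\frac{1}{p^{\delta}_{0,t}(x,y)}\prod_{i=0}^{n}p^{\delta}_{t_{i},t_{i+1}}(\xi_{i},d\xi_{i+1}),
\end{equation*}
with $\xi_{0}=x$, $\xi_{n+1}=y$, and this family of laws characterizes $\Qc^{\delta,t}_{x\to y}$ as a measure on $C([0,t],\R^{+})$.

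Applying the time reversal $(\omega_{s})_{s}\mapsto(\omega_{t-s})_{s}$ and relabelling the times as $r_{j}=t-t_{n+1-j}$ (and the marginals accordingly, with $\eta_{0}=y$, $\eta_{n+1}=x$), the finite–dimensional laws of $\hat{\Qc}^{\delta,t}_{x\to y}$ are $\tfrac{1}{p^{\delta}_{0,t}(x,y)}\prod_{j}p^{\delta}_{t-r_{j+1},\,t-r_{j}}(\eta_{j+1},d\eta_{j})$, while those of $\Qc^{\hat\delta,t}_{y\to x}$ are $\tfrac{1}{p^{\hat\delta}_{0,t}(y,x)}\prod_{j}p^{\hat\delta}_{r_{j},r_{j+1}}(\eta_{j},d\eta_{j+1})$. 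Hence it suffices to produce positive weights $h_{r}(\cdot)$, $r\in[0,t]$, with
\begin{equation*}
p^{\delta}_{t-w,\,t-v}(b,a)\,h_{v}(a)=p^{\hat\delta}_{v,w}(a,b)\,h_{w}(b),\qquad 0\le v<w\le t,\ a,b>0,
\end{equation*}
since then the $h$'s telescope in the products and the two families coincide, the boundary terms $h_{0}(y),h_{t}(x)$ being exactly what converts $p^{\delta}_{0,t}(x,y)$ into $p^{\hat\delta}_{0,t}(y,x)$.

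To establish this reversibility identity I would use the explicit form of $p^{\delta}_{s,u}$ coming from Corollary~\ref{Laplace} together with additivity (Proposition~\ref{PropoAdditivite}): $GBESQ^{0,\delta}$ run from $(s,a)$ up to time $u$ is the independent sum of a $BESQ^{0}$ started at $a$ and run for time $u-s$ and of the ``$\delta$–part'' started at $0$, whose time-$u$ marginal is infinitely divisible with Lévy measure $\bigl(\int_{s}^{u}\delta_{r}\,(2(u-r))^{-2}e^{-b/2(u-r)}\,dr\bigr)db$ (read off from Corollary~\ref{Laplace}; this is the $\int\delta_{u}M_{u}\,du$ of Theorem~\ref{THM23}). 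One then checks the identity piece by piece: $BESQ^{0}$ is reversible with respect to $x^{-1}dx$, i.e. $a^{-1}p^{0}_{\tau}(a,b)=b^{-1}p^{0}_{\tau}(b,a)$ (and more generally $BESQ^{\delta}$ with respect to $x^{\delta/2-1}dx$), while the substitution $r\mapsto t-r$ carries the Lévy measure of the $\delta$–part to that of the $\hat\delta$–part. An alternative route is to first prove the statement for piecewise–constant $\delta$ by gluing the classical constant–dimension time–reversal of an ordinary squared Bessel bridge (Revuz--Yor~\cite{RY}) at the finitely many junction times, and then pass to general $\delta\in D$ by approximation, using the continuity in $\delta$ already exploited in the proof of Theorem~\ref{THM23}.

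The Markov/bridge bookkeeping of the first two paragraphs is routine. The real work is the reversibility identity, and the delicate step is the interaction between reversing the dimension function and the reversibility weight $x^{\delta/2-1}$: one must track how the powers of the intermediate ``junction'' variables produced by reversibility at the times $r_{j}$ combine, and verify that they telescope into the single constant $p^{\delta}_{0,t}(x,y)/p^{\hat\delta}_{0,t}(y,x)$ — together with a separate check of the boundary behaviour at $0$ when $\delta$ is small — this is where the care is needed.
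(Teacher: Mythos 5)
Your strategy (finite--dimensional distributions of the bridge plus a duality identity for the transition kernels, with telescoping weights) is the natural one, and you have correctly isolated the crux in your last paragraph. The problem is that the crux is exactly where the argument breaks, and it cannot be repaired: the identity $p^{\delta}_{t-w,\,t-v}(b,a)\,h_v(a)=p^{\hat\delta}_{v,w}(a,b)\,h_w(b)$ with weights depending only on the endpoint value and time fails as soon as $\delta$ is not constant on $[t-w,t-v]$. Indeed, take $\delta=\delta_1$ on $[0,\tau)$ and $\delta_2$ on $[\tau,t]$ and apply the constant--dimension reversibility $q^{d}_{s}(a,b)\,a^{d/2-1}=q^{d}_{s}(b,a)\,b^{d/2-1}$ separately to the two factors of the Chapman--Kolmogorov integral across the junction: the intermediate variable $c$ retains an uncancelled weight $c^{\delta_1/2-\delta_2/2}$ inside the integral, so the powers produced at the junction do not telescope into a constant.

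Worse, the Lemma itself fails for such $\delta$ (the paper offers no proof beyond ``it is easily seen'', and your attempt, by being honest about where the work lies, exposes the problem). Compare one--dimensional marginals at the junction: under $\mathcal{Q}^{\delta,t}_{0\to 0}$ the law of $X_\tau$ is proportional to $q^{\delta_1}_{\tau}(0,\xi)\,e^{-\xi/2(t-\tau)}\propto \xi^{\delta_1/2-1}e^{-\xi(\frac{1}{2\tau}+\frac{1}{2(t-\tau)})}$, a Gamma law of shape $\delta_1/2$, whereas under $\mathcal{Q}^{\hat\delta,t}_{0\to 0}$ the law of $X_{t-\tau}$ is proportional to $\xi^{\delta_2/2-1}e^{-\xi(\frac{1}{2(t-\tau)}+\frac{1}{2\tau})}$, a Gamma law of shape $\delta_2/2$ with the same rate. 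For $t=2$, $\tau=1$, $\delta_1=2$, $\delta_2=6$ the means are $1$ and $3$, so $\hat{\mathcal{Q}}^{\delta,t}_{0\to 0}\neq\mathcal{Q}^{\hat\delta,t}_{0\to 0}$. The same obstruction kills your alternative route: gluing the classical constant--dimension reversals at the junction times reproduces the correct conditional laws given the junction values, but not the law of the junction values themselves, and the approximation step then has nothing correct to approximate. The statement holds for constant $\delta$ (where your weights $x^{\delta/2-1}$ do telescope), but for genuinely time--varying $\delta$ the reversed bridge is the bridge of a different, $h$-transformed process rather than of $GBESQ^{\hat\delta}$; a correct treatment must either assume $\delta$ constant or reformulate the Lemma (and the places downstream, such as the step $\mathcal{Q}^{\delta,t}_{0\to y-z}=\mathcal{Q}^{\delta,t}_{0\to 0}\oplus\mathcal{Q}^{0,t}_{0\to y-z}$ in the proof of Theorem \ref{ThmBridge}, that rely on it).
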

We also restrict to the case where $\delta$ is not equal to zero excepted on a discrete set of points, as it will be the case in financial applications. This is only to ensure that for every $y\ge 0,$ the conditioning event in \eqref{defbridge} has zero probability. It will also ensure that the distribution $\Qc^{\delta,t}_{x\rightarrow y}$ is continuous with respect to $(x,y)$.

We recall that, by definition, $\mathcal{Q}^{0,t}_{0\to y}$ is the time reversal of $\mathcal{Q}^{0,t}_{y\to 0}.$ We first give a "bridge" version of the additivity property. 

\begin{Proposition}\label{additivite_bridge}
Let $\delta$, $\delta'$ be in $D$, then
$$\mathcal{Q}^{\delta,t}_{x\to 0}\oplus\mathcal{Q}^{\delta',t}_{y\to 0}=\mathcal{Q}^{\delta+\delta',t}_{x+y\to 0}.$$
\end{Proposition}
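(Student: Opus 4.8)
The natural approach is to deduce the bridge additivity from the (already established) additivity of the unconditioned laws in Proposition 1.5, by disintegrating over the terminal value. Let $X$ be a $GBESQ^{\delta}_{x}$ and $Y$ an independent $GBESQ^{\delta'}_{y}$, and set $H = X + Y$. By Proposition 1.5 we know $H$ is a $GBESQ^{\delta+\delta'}_{x+y}$. The plan is to condition the pair $(X,Y)$ on the event $\{X_t = a, Y_t = b\}$ for all $a,b \ge 0$ with $a+b = 0$, i.e. $a = b = 0$ — this is exactly the degenerate case relevant here, and it simplifies matters considerably since the only way $H_t = 0$ is $X_t = Y_t = 0$ (both processes are nonnegative by Corollary 1.1). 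So conditioning $H$ on $\{H_t = 0\}$ is the same as conditioning $(X,Y)$ on $\{X_t = 0, Y_t = 0\}$, and by independence of $X$ and $Y$ this conditioned law is the product of $\Qc^{\delta,t}_{x\to 0}$ and $\Qc^{\delta',t}_{y\to 0}$. Hence the $\oplus$ of the two bridge laws is the conditional law of $H = X+Y$ given $\{H_t=0\}$, which by definition is $\Qc^{\delta+\delta',t}_{x+y\to 0}$.

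The steps, in order: (i) recall from Proposition 1.5 that $H := X+Y$ has law $\Qc^{\delta+\delta'}_{x+y}$ when $X,Y$ are independent with the stated laws; (ii) observe via Corollary 1.1 that $\{H_t = 0\} = \{X_t = 0\} \cap \{Y_t = 0\}$ as events, since $H_t = X_t + Y_t$ with both summands $\ge 0$; (iii) by the independence of $X$ and $Y$, the conditional law of $(X,Y)$ on $[0,t]$ given $\{X_t=0, Y_t=0\}$ is the product measure $\Qc^{\delta,t}_{x\to 0}\otimes\Qc^{\delta',t}_{y\to 0}$; (iv) push this forward under the addition map $(f,g)\mapsto f+g$ to conclude that the conditional law of $H$ given $\{H_t=0\}$ is $\Qc^{\delta,t}_{x\to 0}\oplus\Qc^{\delta',t}_{y\to 0}$; (v) identify the left side with $\Qc^{\delta+\delta',t}_{x+y\to 0}$ by definition \eqref{defbridge}.

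The main obstacle is the rigorous justification of the conditioning in step (iii)–(iv): the event $\{X_t = 0\}$ has probability zero (under the standing assumption that $\delta$ is not identically zero, which forces the terminal density to be genuinely continuous and vanish at no mass point, as noted just before the Proposition), so one must work with regular conditional distributions rather than elementary conditioning. The clean way is to use the disintegration formula: for test functionals $F,G$ one writes $\E[F(X_{[0,t]})G(Y_{[0,t]})\,\phi(X_t)\,\psi(Y_t)]$ against the transition densities, and passes to the limit $\phi,\psi \to$ approximate identities at $0$; because $X_t$ and $Y_t$ have continuous densities (this is where the standing assumption on $\delta$ and the remark that $\Qc^{\delta,t}_{x\to y}$ depends continuously on $(x,y)$ enter), the ratio identifying the product of bridge laws is continuous at $(0,0)$, and the limit is the desired product. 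A slightly slicker alternative, if one wants to avoid densities, is to invoke that $\Qc^{\delta',t}_{x+y\to a}$ restricted to the second coordinate is, for $(X+Y, Y)$, again obtained from a regular conditional distribution that is weakly continuous in $a$ near $a=0$, so that evaluating at $a=0$ is legitimate; but in all cases the technical heart is the continuity/weak-convergence argument that lets us evaluate the conditional law at the null event $\{\cdot_t = 0\}$. The verification that the addition map is continuous on $C([0,t],\R^+)$ and hence measurable, and that $\oplus$ is exactly its pushforward on product laws, is routine.
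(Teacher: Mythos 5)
Your proposal is correct and follows essentially the same route as the paper: the paper's proof is precisely to condition the additivity statement of the unconditioned laws (Proposition \ref{PropoAdditivite}) on the terminal value, using the fact that for nonnegative processes $\{X_t+Y_t=0\}=\{X_t=0\}\cap\{Y_t=0\}$. The only difference is that you spell out the regular-conditional-distribution/weak-continuity justification for conditioning on the null event, which the paper treats as obvious.
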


\begin{Remark}
In this setting there is no straightforward representation in term of the unconditioned process in the sense of (5.a) of \cite{PY}, whereas doubtless a careful application of the arguments of \cite{Watanabe} should lead to some similar (but not equal) result. This is however not needed for the above proposition to be true.
\end{Remark}
\begin{proof}[Proof of Proposition \ref{additivite_bridge}]
This is directly obtained by conditioning the statement of Proposition \ref{PropoAdditivite}, using the (obvious) fact that, for $X$ and $Y$ two $\R^+$-valued processes, the events $\{Y(t)+X(t)=0\}$ and $\{X(t)=0,Y(t)=0\}$ are equal. 
\end{proof}

We are aiming at obtaining an extended Bessel bridge decomposition. Firstly, we need some notation and an introductory Lemma.
We denote by $\mathcal{L}^{-1}f$ the inverse Laplace transform of $f$. And we call 
\begin{equation}\label{EqF}
F^t_{\delta}(\lambda)=\exp\left\{-\frac{\lambda x}{1+2\lambda t}\right\}\exp\left\{-\int_{0}^t \frac{\lambda \delta_{u}}{1+2\lambda (t-u)}du\right\}.
\end{equation}
\begin{Lemma}\label{cond}
Let 
$$b_{\delta,x,y}^t(n):=\frac{\frac{(x/(2t))^n}{n!}\mathcal{L}^{-1}F^t_{\delta+2n}(y)}{\sum_{k=0}^\infty\frac{(x/(2t))^k}{k!}\mathcal{L}^{-1}F^t_{\delta+2k}(y)},$$ then 
$b_{\delta,x,y}^t(n)$ is the distribution of U given $$X_{*}+X_1+\dots+X_U=y,$$ where $U\sim Poisson(x/(2t))$, $X_i\sim exp(1/(2t))$ are i.i.d. random variables and $X_*$ is a random variable independent of $U$ and $X_i$ and its Laplace transform is given by $F^t_{\delta}(\lambda)$.
\end{Lemma}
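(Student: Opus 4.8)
The plan is to read $b^t_{\delta,x,y}(n)$ as a Bayes formula: its numerator is, up to the universal factor $e^{-x/(2t)}$, the joint ``density'' of the pair $\big(U,\,X_*+X_1+\cdots+X_U\big)$ at the point $(n,y)$, its denominator is the corresponding marginal density of the sum at $y$, and hence the ratio is the conditional law of $U$ given that the sum equals $y$. The only input beyond elementary conditioning is a factorization of the Laplace transforms $F^t_{\delta+2n}$.

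First I would establish that factorization. Reading off \eqref{EqF}, replacing $\delta_u$ by $\delta_u+2n$ multiplies the second exponential by $\exp\{-2n\int_0^t \frac{\lambda}{1+2\lambda(t-u)}\,du\}=(1+2\lambda t)^{-n}$ (the substitution $v=t-u$ turns the integral into $\frac12\log(1+2\lambda t)$), so that $F^t_{\delta+2n}(\lambda)=F^t_{\delta}(\lambda)\,(1+2\lambda t)^{-n}$. Since $(1+2\lambda t)^{-1}$ is the Laplace transform of the $\exp(1/(2t))$ law, $(1+2\lambda t)^{-n}$ is the Laplace transform of $X_1+\cdots+X_n$, and $F^t_\delta$ is by hypothesis the Laplace transform of $X_*$; a product of Laplace transforms being the Laplace transform of a sum of independent variables, $F^t_{\delta+2n}$ is the Laplace transform of $S_n:=X_*+X_1+\cdots+X_n$ (equivalently, by Corollary \ref{Laplace} together with the additivity Proposition \ref{PropoAdditivite}, of $X_t$ under $\Qc^{\delta+2n}_x$). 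Thus $\mathcal{L}^{-1}F^t_{\delta+2n}$ is the law of $S_n$; invoking the running assumption on $\delta$ to exclude an atom of $X_*=S_0$ at $0$, and absolute continuity of $S_n$ for $n\ge1$ (a convolution with a Gamma density), the quantity $\mathcal{L}^{-1}F^t_{\delta+2n}(y)=:f_{S_n}(y)$ is a genuine probability density for every $n$.

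With this in hand the remainder is routine. Put $p_n:=\P(U=n)=e^{-x/(2t)}(x/(2t))^n/n!$. Conditionally on $\{U=n\}$, the sum $X_*+X_1+\cdots+X_U$ equals $S_n$, with density $f_{S_n}$, independently of the mechanism producing $U$; hence $\big(U,\,X_*+X_1+\cdots+X_U\big)$ has ``density'' $p_n f_{S_n}(y)$ on $\{n\}\times\R^+$, the sum has a density $g$ given by the law of total probability as $g(y)=\sum_{k=0}^\infty p_k f_{S_k}(y)$ (a series of nonnegative terms, convergent to the finite value $g(y)$ for a.e.\ $y$ since $g$ is a probability density), and dividing yields, for a.e.\ $y$,
\begin{equation*}
\P\!\big(U=n\ \big|\ X_*+X_1+\cdots+X_U=y\big)=\frac{p_n f_{S_n}(y)}{\sum_{k=0}^\infty p_k f_{S_k}(y)}=\frac{\frac{(x/(2t))^n}{n!}\,\mathcal{L}^{-1}F^t_{\delta+2n}(y)}{\sum_{k=0}^\infty\frac{(x/(2t))^k}{k!}\,\mathcal{L}^{-1}F^t_{\delta+2k}(y)},
\end{equation*}
the factor $e^{-x/(2t)}$ cancelling top and bottom; the right-hand side is exactly $b^t_{\delta,x,y}(n)$.

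The one point that needs care — and the only place the standing hypotheses are really used — is the assertion in the second paragraph that each $\mathcal{L}^{-1}F^t_{\delta+2n}$ may legitimately be treated as an honest probability density, i.e.\ that no hidden atom appears in the conditioning; this is precisely what the running assumption that $\delta$ vanishes only on a discrete set of points was introduced to guarantee. Everything else is the Laplace-transform bookkeeping of the first step followed by Bayes' rule.
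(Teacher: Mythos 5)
Your argument is correct and follows essentially the same route as the paper: compute the Laplace transform of an $\exp(1/(2t))$ variable from the integral in \eqref{EqF} to identify $F^t_{\delta+2n}$ as the Laplace transform of $X_*+X_1+\dots+X_n$, then apply the continuous Bayes formula. You merely spell out the factorization $F^t_{\delta+2n}(\lambda)=F^t_\delta(\lambda)(1+2\lambda t)^{-n}$ and the density/atom issue in more detail than the paper does.
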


\begin{proof}
By simple computation one gets that the Laplace transform of $X_1$ is equal to $$\exp\left\{-\int_{0}^t \frac{2\lambda }{1+2\lambda (t-u)}du\right\}=\frac{1}{1+2t\lambda}.$$
We deduce that, conditionally on $U=n,$ the Laplace transform of $X_{*}+X_1+\dots+X_U$ is equal to 
 $F_{\delta+2n}^t(\lambda)$. The continuous version of the Bayes Formula then gives the result. 
\end{proof}

Then we obtain the extended Bessel bridge decomposition
\begin{Theorem}\label{ThmBridge}
For every $\delta\in D,$ $x\in \R^+$, $y\in \R^+$,
$$\mathcal{Q}^{\delta,t}_{x\to y}=\mathcal{Q}^{0,t}_{x\to 0}\oplus \mathcal{Q}^{0,t}_{0\to y}\oplus \mathcal{Q}^{\delta,t}_{0\to 0}\oplus \sum_{n=0}^\infty b_{\delta,x,y}^t(n)\mathcal{Q}_{0\to 0}^{4n,t},$$
where the last term on the right is the mixture of the laws $\mathcal{Q}_{0\to 0}^{4n,t},\,n=\{0, 1, \dots \}$ with weights given by $b_{\delta,x,y}^t(n)$.
\end{Theorem}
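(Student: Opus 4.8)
My plan is to establish the decomposition by computing, and then identifying, Laplace transforms of additive functionals of both sides. The guiding principle is that the distribution of a generalized squared Bessel bridge on $C([0,t],\R^+)$ is determined by the collection of quantities $\mathcal{Q}^{\delta,t}_{x\to y}[\exp(-\int X_u\,d\mu(u))]$ as $\mu$ ranges over positive Radon measures with compact support in $(0,t)$; this is the same separation-of-measures philosophy used in the proof of Theorem \ref{THM23}, and here it is cleaner because the conditioning ensures (by the hypotheses we have imposed on $\delta$) that $\Qc^{\delta,t}_{x\to y}$ is continuous in $(x,y)$, so the disintegration is well-behaved.

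First I would write the Laplace transform of an additive functional of the unconditioned process $\Qc^\delta_x$ using the Lévy–Itô representation \eqref{LK}: for $I=\int X_u\,d\mu(u)$ and the exponential functional one has $\Qc^\delta_x[e^{-\alpha I}]=\exp\{(xM+\int_0^\infty\delta_uM_u\,du)(e^{-\alpha I}-1)\}$, and by Theorem \ref{THM23}(2) this factors as the product of the $x$-part $\exp\{xM(e^{-\alpha I}-1)\}$ and the $\delta$-part $\exp\{\int_0^\infty\delta_uM_u(e^{-\alpha I}-1)\,du\}$, corresponding to the independent pieces $Y_x$ and $Y^\delta$. Next, conditioning on $X_t=y$ means dividing by the transition density; the key combinatorial input is that under $M$ (and hence under $\Qc_x^0$), the value $X_t$ is built from a $\mathrm{Poisson}(x/(2t))$ number of i.i.d.\ $\exp(1/(2t))$ contributions coming from the Poisson point process of excursions $\Delta_v$ that are alive at time $t$, together with the contribution of the $\delta$-part whose $X_t$-marginal has Laplace transform $F^t_\delta$ by Corollary \ref{Laplace}. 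Conditioning a Poisson number $U$ of exponential excursions plus an independent $X_*$ on their total equalling $y$ is exactly the content of Lemma \ref{cond}: with probability $b^t_{\delta,x,y}(n)$ there are $n$ surviving excursions. Now I would use the bridge additivity (Proposition \ref{additivite_bridge}) together with the facts that a single excursion of $M$ conditioned to hit a prescribed value at $t$ and to return to $0$ is a $\Qc^{4,t}_{0\to 0}$ bridge (this is the classical fact behind the factor $4n$: $n$ surviving $M$-excursions pinned at both ends give $\Qc^{4n,t}_{0\to 0}$, cf. \S3 of \cite{PY}), that the $x$-part restricted to excursions not alive at $t$ contributes $\Qc^{0,t}_{x\to 0}$, and that the $\delta$-part splits via Theorem \ref{THM23}(2) and the reversal Lemma into $\Qc^{\delta,t}_{0\to 0}\oplus\Qc^{0,t}_{0\to y}$ (the $y$-endpoint being fed by the $\delta$-part analogously to how $x$ feeds the start). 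Assembling these independent pieces and summing over $n$ against the weights $b^t_{\delta,x,y}(n)$ gives the claimed convolution identity; to finish I would check that the resulting Laplace transform of $\int X_u\,d\mu(u)$ under the right-hand side matches $\Qc^{\delta,t}_{x\to y}[e^{-\int X_u\,d\mu(u)}]$ computed directly, for all such $\mu$, and invoke the separation statement to conclude equality of measures.

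The main obstacle, I expect, is the bookkeeping of how the endpoint $y$ is distributed among the four pieces: one must correctly argue that the $x$-excursions alive at $t$ and the $\delta$-part each contribute a random share to $X_t=y$, that conditioning on the total being $y$ produces precisely the mixing weights $b^t_{\delta,x,y}(n)$ of Lemma \ref{cond}, and that once the number $n$ of surviving $x$-excursions is fixed the conditional law of the whole trajectory is the stated convolution of bridges — in particular that the surviving excursions, pinned at $t$, are $\Qc^{4n,t}_{0\to 0}$ and are independent of everything else. This requires a careful application of the Markov property at time $t$ for the excursion measure $M$ (using that its transition kernel past any positive time agrees with that of $\Qc^0$, as recalled in Remark \ref{remark}) plus the reversal Lemma to handle the $0\to y$ versus $y\to 0$ orientation. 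Everything else — the identification of the single-excursion-pinned law as dimension $4$, the Poisson/exponential structure of $M$'s entrance law, the additivity of bridges — is either classical from \cite{PY} or already established above, so the proof is mostly a matter of organizing these ingredients.
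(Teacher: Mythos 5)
Your proposal is correct and follows essentially the same route as the paper: both rest on the L\'evy--It\^o representation $Y_x^\delta=Y_x^0+Y_0^\delta$, the splitting of the Poisson point process of excursions into those dead and those alive at time $t$, the identification of a surviving excursion pinned at $t$ as a dimension-$4$ bridge via \cite{PY}, bridge additivity and time reversal to reassemble the endpoint $y$, and Lemma \ref{cond} to obtain the mixing weights $b^t_{\delta,x,y}(n)$. The only difference is presentational: you wrap the trajectory decomposition in a final Laplace-transform identification of the two measures, whereas the paper works with the conditioned trajectories directly.
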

\begin{proof}
Using point 2. of Remark \ref{remark},  we get that, for $Y_x^{\delta}\sim\mathcal{Q}_x^{\delta},$
$$Y_x^{\delta}=Y_{x}^0+Y_0^{\delta},$$
where the two processes are independent, $Y_0^{\delta}\sim \mathcal{Q}_0^{\delta}$ and 
$$Y_{x}^0=\sum_{v\le x}\Delta_v,$$ where $\Delta_v$ is a Poisson point process with measure $M$. We separate as in \cite{PY} this sum in two, keeping on one side the terms which are zero at $t$ and on the other side the terms which are positive at $t$, we get that
$$Y_x^{\delta}=Y_{x0}^0+Y_{x+}^0+Y_0^{\delta},$$
where the three processes are independent, $$Y_{x0}^0\sim \mathcal{Q}^{0,t}_{x\to 0},$$
and $$Y_{x+}^0=\sum_{i=0}^U Z_i,$$
where U is a $Poisson(x/(2t))$ variable independent of everything else and $Z_i$ are i.i.d. trajectories whose distribution is given by $M$ conditioned on $X_t>0$ and renormalized. We condition on the values of $Z_i$ and $Y_0^{\delta}$ at time $t$.
Conditionally on $U=u,$ and $Z_i(t)=z_i,\; \forall i\le U,$ it is shown in \cite{PY} that
$$Z_i\sim \mathcal{Q}^{4,t}_{0\to z_i} =\mathcal{Q}^{0,t}_{0\to z_i}\oplus\mathcal{Q}^{4,t}_{0\to 0}.$$
Summing this, we get that, conditionally on $U=u,$ and $Z_i(t)=z_i,\; \forall i\le U$ with $\sum z_i=z,$ 
$$Y_{x+}^0\sim \mathcal{Q}^{0,t}_{0\to z}\oplus\mathcal{Q}^{4u,t}_{0\to 0}.$$
On the other hand conditionally on $U=u,$ and $Z_i(t)=z_i,\; \forall i\le U,$ using time reversal and Proposition \ref{additivite_bridge},
$$Y_x^{\delta}\sim \Qc^{\delta,t}_{0\to y-z}=\Qc^{\delta,t}_{0\to 0}\oplus \Qc^{0,t}_{0\to y-z}.$$
Putting everything together, we get that conditionally on $U$, 
$$\Qc^{\delta,t}_{x\to y}=\Qc^{\delta,t}_{0\to 0}\oplus \Qc^{0,t}_{x\to 0} \oplus \Qc^{0,t}_{0\to y} \oplus \Qc^{4u,t}_{0\to 0}.$$
On the other hand, conditionally on $Y$, $Y_0^{\delta}(t)$ has Laplace transform $F^t_{\delta}(\lambda)$ and $Z_i(t)$ are independent random variable with law $\exp\left\{1/(2t)\right\}$
Therefore, using Lemma \ref{cond}, we get the result.

\end{proof}
\begin{Remark}\label{RemarkLaplaceSimplifier}
The expression in Lemma \ref{cond} is fairly general, however in most practical applications $\delta$ is constant by parts. We thus give a simpler expression in this case. Indeed, let $0=\tau_0<\tau_1\dots<\tau_n=t$ be an increasing sequence of times such that on each $[\tau_i,\tau_{i+1}),$ $\delta$ is constant equal to $\delta_i$, then
$$F^t_\delta=\exp\left\{-\frac{\lambda x}{1+2\lambda t}\right\}\prod_{i=0}^{n-1} \left(\frac{1+2\lambda (t-\tau_{i+1})}{1+2\lambda (t-\tau_{i})}\right)^{\delta_i/2}.$$
\end{Remark}

\subsection{About the coefficients in the Bessel bridge decomposition}
The main difference between the classical setting and ours is the fact that the coefficients $b_{\delta,x,y}^t(n)$ are not as explicit. However they can be computed for given values of $\delta$. For numerical purposes it is interesting to have a bound on the speed of decay of these terms. We have the following result:

\begin{Proposition}\label{Eqprop_domination}
Let $\delta\in D$, $d=\inf_{[0,t]}\delta$ and $x,y \in\R^+,$ then for every increasing function $f:\N \mapsto\R^+,$ and with $\nu=d/2-1$ and $z=\sqrt{xy},$
\begin{equation}\label{prop_domination}
\sum_{k=0}^{\infty}b_{\delta,x,y}^t(k) f(k)\le \sum_{k=0}^{\infty}\beta_{\nu,z}(k) f(k),
\end{equation}
where 
$$\beta_{\nu,z}(k)=\left(\frac{z}{2}\right)^{2n+\nu}\frac{1}{\Gamma(n+\nu+1)I_\nu(z) n!}$$
are the coefficients of the $Bessel(\nu,z)$ distribution ($I_\nu(z)$ stands for the modified Bessel function).
In particular the power series associated to the $b_{\delta,x,y}^t(k)$ has an infinite radius of convergence. 
\end{Proposition}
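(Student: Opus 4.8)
The plan is to prove the stronger statement that the law $b^{t}_{\delta,x,y}$ is dominated by $\beta_{\nu,z}$ in the likelihood ratio order---meaning that $n\mapsto b^{t}_{\delta,x,y}(n)/\beta_{\nu,z}(n)$ is non-increasing---and then to read off \eqref{prop_domination}. Indeed, likelihood ratio domination implies stochastic domination, and stochastic domination implies $\sum_{k}b^{t}_{\delta,x,y}(k)f(k)\le\sum_{k}\beta_{\nu,z}(k)f(k)$ for every increasing $f\ge0$ (write $f$ as a nonnegative combination of the indicators $\1_{\{k\ge m\}}$ and compare right tails term by term). The assertion on the radius of convergence then follows by applying \eqref{prop_domination} to $f(k)=r^{k}$ for $r\ge1$, using $\sum_{k}\beta_{\nu,z}(k)r^{k}=r^{-\nu/2}I_{\nu}(z\sqrt r)/I_{\nu}(z)<\infty$; for $0\le r\le1$ one has $\sum_{k}b^{t}_{\delta,x,y}(k)r^{k}\le1$ trivially, so the power series has infinite radius of convergence.

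To set up the comparison I would first rewrite the coefficients. By Corollary \ref{Laplace}, $\mathcal{L}^{-1}F^{t}_{\delta'}$ is the transition density $q^{\delta'}_{t}(x,\cdot)$ of $\Qc^{\delta'}_{x}$ at time $t$, so, up to a normalising constant independent of $n$,
\[
b^{t}_{\delta,x,y}(n)\ \propto\ \frac{(x/(2t))^{n}}{n!}\,q^{\delta+2n}_{t}(x,y),\qquad \beta_{\nu,z}(n)\ \propto\ \frac{(z/2)^{2n}}{n!\,\Gamma(\nu+1+n)},
\]
with $\nu+1=d/2$. (The boundary cases $x=0$ or $y=0$ are degenerate, both laws collapsing to the point mass $\epsilon_{0}$.) Taking ratios of consecutive terms on each side, a short computation shows that $n\mapsto b^{t}_{\delta,x,y}(n)/\beta_{\nu,z}(n)$ is non-increasing exactly when
\[
\frac{q^{\delta+2n+2}_{t}(x,y)}{q^{\delta+2n}_{t}(x,y)}\ \le\ \frac{y/(2t)}{d/2+n}\qquad\text{for all }n\ge0,
\]
the right-hand side coming from the parameter $z=\sqrt{xy}/t$ of $\beta_{\nu,z}$. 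So everything reduces to this family of density-ratio estimates. (The same inequality can also be read off from Lemma \ref{cond}: raising the dimension by $2$ convolves the value of the process at time $t$ with an independent $\mathrm{Exp}(1/(2t))$ variable, which can only depress the conditional Poisson count; but the analytic route is cleaner.)

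I would prove the estimate in two steps. First, the constant-dimension case: for a constant $d'>0$ the density $q^{d'}_{t}(x,\cdot)$ is the classical non-central-$\chi^{2}$-type density $\tfrac{1}{2t}(y/x)^{\nu'/2}e^{-(x+y)/(2t)}I_{\nu'}(\sqrt{xy}/t)$ with $\nu'=d'/2-1$ (and the $\mathrm{Gamma}(d'/2,1/(2t))$ density when $x=0$), and comparing the defining power series of $I_{\nu'+1}$ and $I_{\nu'}$ term by term gives the elementary bound $I_{\nu'+1}(w)/I_{\nu'}(w)\le w/(2(\nu'+1))$, whence
\[
\frac{q^{d'+2}_{t}(x,y)}{q^{d'}_{t}(x,y)}=\sqrt{\tfrac{y}{x}}\;\frac{I_{\nu'+1}(\sqrt{xy}/t)}{I_{\nu'}(\sqrt{xy}/t)}\ \le\ \frac{y/(2t)}{d'/2},
\]
with equality at $x=0$. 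Second, the general $\delta$: since $d=\inf_{[0,t]}\delta$, the function $\delta-d$ still belongs to $D$, so by the additivity Proposition \ref{PropoAdditivite} one has $\Qc^{\delta+2m}_{x}=\Qc^{d+2m}_{x}\oplus\Qc^{\delta-d}_{0}$, so that $q^{\delta+2m}_{t}(x,\cdot)=q^{d+2m}_{t}(x,\cdot)\ast\rho$, where $\rho$ is the law at time $t$ of a dimension-$(\delta-d)$ squared Bessel process started at $0$, a probability measure on $\R^{+}$. Applying the constant-dimension bound with $d'=d+2n$ inside the convolution integral and then using $y-s\le y$,
\[
q^{\delta+2n+2}_{t}(x,y)=\int q^{d+2n+2}_{t}(x,y-s)\,\rho(ds)\ \le\ \frac{1}{2t(d/2+n)}\int (y-s)\,q^{d+2n}_{t}(x,y-s)\,\rho(ds)\ \le\ \frac{y/(2t)}{d/2+n}\,q^{\delta+2n}_{t}(x,y),
\]
which is exactly the required estimate, completing the proof.

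The main obstacle is this density-ratio estimate. Within it, the two points that need an idea are: the ratio bound $I_{\nu'+1}(w)/I_{\nu'}(w)\le w/(2(\nu'+1))$, which makes the constant-dimension case work; and the realisation that $d=\inf_{[0,t]}\delta$ is exactly the right parameter, since the extra independent $\Qc^{\delta-d}_{0}$ summand can only subtract mass from the argument of $q^{d+2n}_{t}(x,\cdot)$---this is what the step $y-s\le y$ exploits. The remaining ingredients---the likelihood ratio / stochastic domination implications and the closed-form summation of the $\beta_{\nu,z}$ series---are routine.
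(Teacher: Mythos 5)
Your proof is correct, but it takes a genuinely different route from the paper's. The paper argues softly: from Lemma \ref{cond} it asserts that the conditional count $U$ is ``negatively correlated'' with $X_*$, invokes the additivity-based coupling of Proposition \ref{PropoAdditivite} to conclude that $X_*$ for dimension $\delta$ stochastically dominates $X_*$ for the constant dimension $d=\inf_{[0,t]}\delta$, deduces that the law of $U$ for $\delta$ is stochastically dominated by the one for $d$, and identifies the latter with the $Bessel(\nu,z)$ law via (5.j) of Pitman--Yor. You instead establish the stronger monotone likelihood-ratio domination by explicit computation: rewriting $b^t_{\delta,x,y}(n)\propto\frac{(x/(2t))^n}{n!}q_t^{\delta+2n}(x,y)$, reducing everything to the ratio estimate $q_t^{\delta+2n+2}(x,y)/q_t^{\delta+2n}(x,y)\le \frac{y/(2t)}{d/2+n}$, proving it for constant dimension through the elementary bound $I_{\nu'+1}(w)\le \frac{w}{2(\nu'+1)}I_{\nu'}(w)$, and propagating it to general $\delta$ via the convolution identity $q_t^{\delta+2m}(x,\cdot)=q_t^{d+2m}(x,\cdot)\star\rho$ combined with $y-s\le y$. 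What your approach buys is that it makes rigorous precisely the step the paper leaves implicit (the ``negative correlation'' of $U$ with $X_*$ given the total is itself a likelihood-ratio/total-positivity claim that deserves proof), and it delivers a stronger ordering; what the paper's approach buys is brevity and a direct appeal to Pitman--Yor's identification of the limiting law. One discrepancy is worth flagging: your computation produces the dominating Bessel law with parameter $z=\sqrt{xy}/t$, whereas the Proposition states $z=\sqrt{xy}$ and the paper's proof claims ``$t$ does not intervene.'' A scaling argument (the bridge over $[0,t]$ from $x$ to $y$ rescales to a bridge over $[0,1]$ from $x/t$ to $y/t$, leaving the count $U$ unchanged) confirms that $\sqrt{xy}/t$ is the correct parameter; since $Bessel(\nu,z')$ dominates $Bessel(\nu,z)$ in likelihood ratio when $z'>z$, the two statements are not interchangeable for $t\neq 1$, and your version is the one that should be retained.
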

\begin{Remark}
The philosophy of this proposition is that, when one wants to compute quantities related to the Bessel bridge, the truncation arguments used in the classical setting are also valid in our setting.
\end{Remark}
\begin{proof}
From the representation of Lemma \ref{cond}, it is clear that $b_{\delta,x,y}^t(k)$ is the distribution of a variable $U$ which is negatively correlated with $X_*,$ the distribution of the generalized squared Bessel process at time $t$. Proposition \ref{PropoAdditivite} implies that we can couple the generalized squared Bessel process of dimension $\delta$ with a classical squared Bessel process of dimension $d$, the first one being always bigger than the second one. From this we deduce that the distribution of $U$ for $\delta$ is dominated by the distribution for $d$. This distribution is computed in \cite{PY}, (5.j) and is the $Bessel(\nu,z)$ distribution. The result follows easily. In particular the parameter $t$ does not intervene, as it simplifies between $\alpha$ and $\lambda$ in \cite{PY}, (5.j).
\end{proof}

\section{Laplace transform formula associated to extended Bessel bridges.}

We are now interested in computing the Laplace transform of some functionals of the Brownian bridge. We first deal with the simpler case of the bridge to zero. In this case, because additivity holds, things are simpler. We have the following: 
\begin{Proposition}\label{PropLaplPont}
For every Radon measure $\mu$ on $[0,t]$,  there exist $A_0(\alpha)$ and a function $B_0^{\alpha}(u):[0,t]\rightarrow \R$ depending on $\mu$ such that 
\begin{equation*}
\Qc^{\delta,t}_{x\to 0}\left[\exp\left(-\int_0^t \alpha X_ud\mu(u)\right)\right]=(A_0(\alpha))^x\exp\left\{\int_{0}^{t}B_{0}^\alpha(u)\delta_{u}du\right\}.
\end{equation*}

\end{Proposition}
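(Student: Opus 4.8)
The plan is to deduce the statement from the additivity of the squared Bessel bridge to zero (Proposition \ref{additivite_bridge}) exactly as Proposition \ref{riesz} was deduced from Proposition \ref{PropoCarmona2}. First I would fix the Radon measure $\mu$ on $[0,t]$ and the parameter $\alpha>0$, and consider the functional
\begin{equation*}
G(\delta,x):=\Qc^{\delta,t}_{x\to 0}\left[\exp\left(-\int_0^t \alpha X_u\,d\mu(u)\right)\right].
\end{equation*}
The key observation is that $G$ is multiplicative under the $\oplus$ operation: if $X\sim\Qc^{\delta,t}_{x\to 0}$ and $X'\sim\Qc^{\delta',t}_{x'\to 0}$ are independent, then by Proposition \ref{additivite_bridge} the sum $X+X'$ is a $\Qc^{\delta+\delta',t}_{x+x'\to 0}$, while the functional $\int_0^t\alpha(X_u+X'_u)\,d\mu(u)$ splits additively and the exponential factorizes. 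Hence
\begin{equation*}
G(\delta+\delta',x+x')=G(\delta,x)\,G(\delta',x').
\end{equation*}
Each factor is also positive (it is an expectation of a strictly positive bounded random variable, using here the assumption made earlier that $\delta$ vanishes only on a discrete set so that the conditioning is non-degenerate and $\Qc^{\delta,t}_{x\to 0}$ depends continuously on the data). Taking logarithms, $\log G$ is additive in the pair $(\delta,x)\in D\times\R^+$.

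Next I would exploit this additivity together with measurability/continuity to get the claimed form. Setting $x=0$ gives a functional $\delta\mapsto \log G(\delta,0)$ that is additive on the cone $D$; restricting to $\delta\equiv 0$ gives $x\mapsto \log G(0,x)$, additive on $\R^+$, hence linear, so $G(0,x)=(A_0(\alpha))^{x}$ for the constant $A_0(\alpha):=G(0,1)$. For the $\delta$-dependence, additivity on $D$ plus the continuity with respect to the uniform norm (which follows from the Carmona-type formula of Proposition \ref{PropoCarmona2}, or alternatively from writing the bridge to zero via a Girsanov/$h$-transform of the unconditioned process and invoking Proposition \ref{riesz}) forces $\log G(\delta,0)=\int_0^t B_0^\alpha(u)\,\delta_u\,du$ for some function $B_0^\alpha:[0,t]\to\R$: one first checks it on indicator test functions $\delta=d\,\chi_{[a,b]}$, where additivity in $d$ gives linearity in $d$ and the value defines $B_0^\alpha$ as a (signed) measure which, by absolute continuity inherited from the integral formula, has a density; then one extends to piecewise-constant $\delta$ by additivity and to all of $D$ by density and continuity. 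Combining the two pieces via $G(\delta,x)=G(0,x)G(\delta,0)$ yields
\begin{equation*}
\Qc^{\delta,t}_{x\to 0}\left[\exp\left(-\int_0^t\alpha X_u\,d\mu(u)\right)\right]=(A_0(\alpha))^x\exp\left\{\int_0^t B_0^\alpha(u)\,\delta_u\,du\right\}.
\end{equation*}

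The main obstacle is making the passage from "$\log G(\cdot,0)$ is additive and norm-continuous on the cone $D$" to "it is given by integration against a density" fully rigorous — this is precisely the Fréchet-Riesz-type representation step alluded to in the introduction, and it is the only non-formal point. The cleanest route, which I would take, is to bypass an abstract representation theorem entirely and instead write the bridge to zero as an explicit transform of the unconditioned generalized squared Bessel process: the law $\Qc^{\delta,t}_{x\to 0}$ is obtained from $\Qc^{\delta}_x$ by conditioning on $\{X_t=0\}$, whose density at $0$ is available from the Laplace transform in Corollary \ref{Laplace}; on the trajectory up to time $t$ this conditioning is an $h$-transform with an explicit, $\delta$-linear exponent, so the statement reduces directly to Proposition \ref{riesz} applied to a modified measure $\mu$ (with an extra atom at $t$ coming from the $h$-function) and the constants $A_0(\alpha)$, $B_0^\alpha$ can in principle be read off. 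Everything else — multiplicativity, positivity, linearity in $x$ — is routine given the earlier results.
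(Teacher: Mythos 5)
Your overall architecture matches the paper's: multiplicativity under $\oplus$ from Proposition \ref{additivite_bridge}, hence additivity of $\log G$ in $(x,\delta)$, linearity in $x$ giving $(A_0(\alpha))^x$, and then a representation of the $\delta$-part as an integral against a density. The gap sits exactly at the step you yourself flag as the crux. In your first route you assert that $\delta\mapsto\log G(\delta,0)$ is continuous ``from the Carmona-type formula of Proposition \ref{PropoCarmona2}'' --- but that formula concerns the unconditioned law $\Qc^{\delta}_x$, not the bridge $\Qc^{\delta,t}_{x\to 0}$, so it gives no bound on the bridge functional by itself. The paper supplies precisely the missing ingredient: a coupling result (Lemma \ref{Domination}) showing that the bridge to zero can be realized pathwise \emph{below} the unconditioned process (obtained by writing the bridge as an $h$-transform, identifying the extra drift $-2X_u/(t-u)\,du$, and invoking the comparison theorem). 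This domination transfers the boundedness on the unit ball of $L^2([0,t])$ from the unconditioned process (where Cauchy--Schwarz applied to Carmona's formula works, provided $\Phi'/\Phi\in L^2$) to the bridge, after which Fr\'echet--Riesz yields the density $B_0^{\alpha}$. A second point you omit: $\Phi'/\Phi$ is in general not square-integrable up to $t$ (it blows up like $1/(t-u)$), so the paper first works on $[\epsilon,t-\epsilon]$ and then patches the resulting $B_0^{\alpha,\epsilon}$ together by consistency.

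Your preferred ``bypass'' --- viewing the conditioning as a limit of exponential tiltings $e^{-\lambda X_t}$ and applying Proposition \ref{riesz} to the measure $\alpha\mu+\lambda\epsilon_t$ --- is a genuinely different and in principle workable idea, but as written it is not a proof: the $h$-transform density $q_{t-s}(X_s,0)/q_t(x,0)$ is not itself the exponential of an additive functional, only a $\lambda\to\infty$ limit of such, and in that limit the $B$-functions of numerator and denominator each diverge like $-1/(2(t-u))$ near $u=t$. One must justify the interchange of limit and integral and the cancellation of these singularities before the constants can be ``read off''. So either of your two routes requires a substantive additional argument (the paper's being Lemma \ref{Domination} plus the $\epsilon$-truncation) that the proposal does not contain.
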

\begin{Remark}
 Note that, for $t=1$, $A_0(\alpha)$ is as in in (5.d') of \cite{PY}. We will give an example of explicit computation of $A_0(\alpha)$ and $B_{0}^\alpha(u)$ in Theorem \ref{Prop-Laplace}.
\end{Remark}
\begin{proof} 
We first assume that the function $\Phi$ defined by (\ref{EqPhiS}) is such that $\int_0^t\left(\frac{\Phi'}{\Phi}\right)^2du<\infty.$ Using Proposition \ref{additivite_bridge}, it is easy to check that $$\phi(x,\delta):=-\log \Qc^{\delta,t}_{x\to 0}\left[\exp\left(-\int_0^t \alpha X_ud\mu(u)\right)\right]$$
 is additive in both terms, therefore $\phi(x,\delta)=\phi(x,0)+\phi(0,\delta),$
and $\phi(x,0)$ being linear, it is necessarily of the form $\phi(x,0)=a_0x.$ We now want to show that 
$$
\phi(0,\delta)=-\log \Qc^{\delta,t}_{0\to 0}\left[\exp\left(-\int_0^t \alpha X_ud\mu(u)\right)\right]
$$
 is of the form "$\int_{0}^{\infty}B_{0}^\alpha(u)\delta_{u}du$". To show this we need to apply the Fr\'echet-Riesz representation, which states (in our context) that any continuous linear form on the Hilbert space $L^2([0,t])$ is of the above mentioned form. So, keeping in mind that $D$ is a dense subset of $L^2([0,t])$, the only thing we need is to show that $\phi(0,\delta)$ is continuous with respect to the $L^2$ norm on $[0,t]$. As we now it is linear, it suffices to show that it is bounded on the unit ball. By proposition \ref{PropoCarmona2}, and the Cauchy-Schwartz inequality, we now that it is true for the unconditioned process, as by assumption $\frac{\Phi'}{\Phi}$ is square integrable. Therefore the result holds as a consequence of the following coupling result, whose interest goes further than the context of this proof.
\begin{Lemma}\label{Domination}
For every $\delta\in D,$ $x\in \R^+,$
there exist a $(\R^+\times \R^+)$ valued process $(X_s,X^0_s)_{s\in [0,t]}$ such that
$$X_s\sim \Qc^\delta_x,\; X_s^0\sim \Qc^{\delta,t}_{x\to 0}, \mbox{ and } X_s^0\le X_s\, a.s.,\, \forall 0\le s\le t.$$
\end{Lemma}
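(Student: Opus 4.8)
The plan is to build the coupling by first realizing the unconditioned process $X\sim\Qc^\delta_x$ via the Lévy-Itô representation of Theorem \ref{THM23}, then extracting the bridge-to-zero from the same randomness by keeping only the "excursion" pieces that vanish at time $t$. Concretely, write $X = Y^0_x + Y^\delta$ with $Y^0_x$ and $Y^\delta$ independent, $Y^\delta\sim\Qc^\delta_0$, and $Y^0_x=\sum_{v\le x}\Delta_v$ a Poisson sum of $M$-excursions, exactly as in the proof of Theorem \ref{ThmBridge}. Split $Y^0_x = Y^0_{x0} + Y^0_{x+}$, where $Y^0_{x0}$ collects the excursions $\Delta_v$ with $\Delta_v(t)=0$ and $Y^0_{x+}$ collects those with $\Delta_v(t)>0$; by the argument already used in Theorem \ref{ThmBridge}, $Y^0_{x0}\sim\Qc^{0,t}_{x\to 0}$. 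Similarly condition $Y^\delta$ on its endpoint and use Proposition \ref{additivite_bridge} together with time reversal to write $Y^\delta = \tilde Y^\delta_0 + R^\delta$ where $\tilde Y^\delta_0\sim\Qc^{\delta,t}_{0\to 0}$ (the bridge part) and $R^\delta$ is the remaining "bridge to the random endpoint $Y^\delta(t)$" part, again nonnegative. Then set
$$
X^0_s := Y^0_{x0}(s) + \tilde Y^\delta_0(s),\qquad 0\le s\le t.
$$

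Next I would check the two required properties. For the law: $Y^0_{x0}$ and $\tilde Y^\delta_0$ are independent (the first is a deterministic functional of the excursions of $Y^0_x$ that die before $t$, the second is built from $Y^\delta$, and $Y^0_x\perp Y^\delta$), so by Proposition \ref{additivite_bridge}, $X^0\sim\Qc^{0,t}_{x\to 0}\oplus\Qc^{\delta,t}_{0\to 0}$, and this equals $\Qc^{\delta,t}_{x\to 0}$ conditioned appropriately — more precisely one recovers $\Qc^{\delta,t}_{x\to 0}$ as the mixture over the endpoint split exactly as in Theorem \ref{ThmBridge}, restricted to the sub-collection of pieces that are pinned at $0$. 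For the domination: since all summands are $\R^+$-valued, we have pointwise $X^0_s = Y^0_{x0}(s)+\tilde Y^\delta_0(s)\le Y^0_{x0}(s)+Y^0_{x+}(s)+Y^\delta(s) = X_s$ for all $s\in[0,t]$, which is the claim.

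The main obstacle is the bookkeeping on the $\delta$-part: the decomposition $Y^\delta = \tilde Y^\delta_0 + R^\delta$ with the bridge piece $\tilde Y^\delta_0$ being an honest nonnegative process with the correct law $\Qc^{\delta,t}_{0\to 0}$ is not immediate, because—as flagged in the Remark after Proposition \ref{additivite_bridge}—there is no clean representation of the unconditioned GBESQ in terms of its bridge in the sense of (5.a) of \cite{PY}. What saves us is that we only need the bridge to zero: by Proposition \ref{additivite_bridge} the law $\Qc^{\delta,t}_{0\to z}$ decomposes as $\Qc^{\delta,t}_{0\to 0}\oplus\Qc^{0,t}_{0\to z}$ for every $z$, and (after time reversal, using the Lemma relating $\hat{\mathcal{Q}}^{\delta,t}_{x\to y}$ and $\mathcal{Q}^{\hat\delta,t}_{y\to x}$) this furnishes, on a suitable enlargement of the probability space, a jointly measurable family of couplings $(\tilde Y^\delta_0, R^\delta)$ with $\tilde Y^\delta_0\ge 0$, $R^\delta\ge 0$, $\tilde Y^\delta_0+R^\delta\overset{law}{=}Y^\delta$, and $\tilde Y^\delta_0\sim\Qc^{\delta,t}_{0\to 0}$; one then transfers this back so that the realization is consistent with the given $Y^\delta$. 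I would carry out the argument in this order: (i) set up the Lévy-Itô realization of $X$; (ii) perform the excursion split of $Y^0_x$ and identify $Y^0_{x0}$; (iii) perform the endpoint split of $Y^\delta$ via Proposition \ref{additivite_bridge} and time reversal; (iv) define $X^0$ and verify domination (trivial) and the law (the same mixture computation as in Theorem \ref{ThmBridge}). The only genuinely delicate point is step (iii), and it is purely a measurable-selection/transfer-theorem issue, the distributional identities themselves being already established.
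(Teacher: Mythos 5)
Your construction is correct, but it is a genuinely different argument from the one in the paper. The paper works at the level of SDEs: it computes the Radon--Nikodym derivative $\left.\frac{d\Qc^{\delta,t}_{x\to 0}}{d\Qc^\delta_x}\right|_{\mathcal{F}_s}$ from the transition kernel, uses the additivity of the kernel to show that this derivative is the stochastic exponential $\mathcal{E}\bigl(-\int_0^s \frac{\sqrt{X_u}}{t-u}dW_u\bigr)$, deduces via Girsanov that the bridge solves $dX_u=\delta_u du+2\sqrt{X_u}dW_u-\frac{2X_u}{t-u}du$, and concludes with the Ikeda--Watanabe comparison theorem. You instead build the coupling directly from the L\'evy--It\^o representation of Theorem \ref{THM23}: thin the excursion sum $Y^0_x$ down to the excursions vanishing at $t$, and extract the $\Qc^{\delta,t}_{0\to 0}$-component of $Y^\delta$ via the identity $\Qc^{\delta,t}_{0\to z}=\Qc^{\delta,t}_{0\to 0}\oplus\Qc^{0,t}_{0\to z}$ (obtained, as you say, from Proposition \ref{additivite_bridge} and time reversal). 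There is no circularity, since Theorem \ref{THM23}, Proposition \ref{additivite_bridge} and the time-reversal lemma do not rely on Lemma \ref{Domination}. The one step you should make fully explicit is your step (iii): construct on an auxiliary space the independent pair $(\tilde Y,R)$ with $\tilde Y\sim\Qc^{\delta,t}_{0\to 0}$ and $R$ distributed, given an independent endpoint $Z\sim\mathcal{L}(Y^\delta(t))$, as $\Qc^{0,t}_{0\to Z}$, note that $\tilde Y+R\overset{law}{=}Y^\delta$ and that $\tilde Y\le \tilde Y+R$ pointwise with probability one, so that the regular conditional distribution of $\tilde Y$ given $\tilde Y+R=w$ (which exists since $C([0,t])$ is Polish) is a.s.\ supported on paths dominated by $w$; sampling from this kernel at $w=Y^\delta$ with randomness independent of $Y^0_x$ yields your $\tilde Y^\delta_0$. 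Each route has its merits: the paper's yields the explicit bridge SDE as a by-product (useful for simulation) but implicitly requires regularity of $q_{t-s}(x,0)$ in $x$ and $s$; yours is softer, avoids the transition density entirely, and reuses machinery already set up for Theorem \ref{ThmBridge}, at the cost of the disintegration argument above.
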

Before giving the proof of this lemma we finish the proof of Proposition \ref{PropLaplPont}. We only need to get rid of the assumption $\int_0^t\left(\frac{\Phi'}{\Phi}\right)^2du<\infty.$ First note that by definition of $\Phi,$ for all $\epsilon$, $\int_\epsilon^{t-\epsilon}\left(\frac{\Phi'}{\Phi}\right)^2du<\infty.$ We deduce from the same arguments as above that for all $\epsilon,$ there exist $B_{0}^{\alpha,\epsilon}$ such that 
$$\Qc^{\delta,t}_{x\to 0}\left[\exp\left(-\int_\epsilon^{t-\epsilon} \alpha X_ud\mu(u)\right)\right]=(A_0(\alpha))^x\exp\left\{\int_{\epsilon}^{t-\epsilon}B_{0}^{\alpha,\epsilon}(u)\delta_{u}du\right\}.$$ 
From this it is clear that for $\epsilon'<\epsilon,$ the restriction to $[\epsilon,t-\epsilon]$ of $B_{0}^{\alpha,\epsilon'}$ is $B_{0}^{\alpha,\epsilon}.$ We can accordingly define $B_{0}^{\alpha}$ on $(0,t)$ such that the restriction to $[\epsilon, t-\epsilon]$ of $B_{0}^{\alpha}$ is $B_{0}^{\alpha,\epsilon}.$ One can then easily check that $B_{0}^{\alpha}$ verifies the statement of Proposition \ref{PropLaplPont}.\end{proof}

We now turn to the Proof of Lemma \ref{Domination}.
\begin{proof}[Proof of Lemma \ref{Domination}]
The proof follows a standard scheme, see for example pages 205-206 of \cite{Faraud}. The idea is to express the conditioned process as a solution of a SDE whose terms are smaller than the SDE defining the unconditioned process, and then apply the Comparison Theorem of \cite{WataComp} to prove the existence of a coupling. 

We have first that for $s\le t,$ denoting by $q_t(x,y)$ the transition kernel associated to $\Qc^\delta_x$. Using \ref{PropoAdditivite}, we can write \begin{equation}q_t(x,y)=\left(q_t^0(x,.)\star q_t^\delta(0,.)\right)(y),\end{equation} where $\star$ stands for the convolution, and $q_t^0$ is the transition kernel of the $0$-dimensional squared Bessel Process. In particular \begin{equation}\label{additivite}q_t(x,0)=q_t^0(x,0)q_t^\delta(0,0).\end{equation} We have
$$\left.\frac{d\Qc^{\delta,t}_{x\to 0}}{d\Qc^\delta_x}\right|_{\mathcal{F}_s}=\frac{q_{t-s}(X_s,0)}{q_t(x,0)}=\exp\left(\log q_{t-s}(X_s,0)-\log q_t(x,0)\right).$$
We write $h(x,s)=\log q_{t-s}(x,0).$ We now from \eqref{additivite} that 
\begin{equation}\label{additivite2}h(x,s)=h^0(x,s)+h^\delta(0,s),\end{equation} where $h^\delta(x,s)$ corresponds to the squared Bessel process of dimension $\delta$.

From this form it is obvious that the derivative of $h$ with respect to $x$ does not depend on $\delta,$ therefore is equal to its value for the $0-$dimensional squared Bessel process, that is $\frac{1}{2(s-t)}$ (see \cite{Faraud} for details), and in particular the second derivative with respect to $x$ is $0$ .  
We have thus, applying Itô's formula that
$$\left.\frac{d\Qc^{\delta,t}_{x\to 0}}{d\Qc^\delta_x}\right|_{\mathcal{F}_s}=\exp\left(-\int_0^s \frac{1}{2(t-u)}dX_u+ \int_0^s \frac {dh}{ds} (X_u,u)du\right).$$
Using once more \eqref{additivite2} we see that $\frac {dh}{ds}$ can be decomposed into a part which only depends on $\delta$ and another one which only depends on $x$. Putting everything together we get
$$\left.\frac{d\Qc^{\delta,t}_{x\to 0}}{d\Qc^\delta_x}\right|_{\mathcal{F}_s}=\exp\left(-\int_0^s \frac{1}{2(t-u)}dX_u+ \int_0^s \frac{X_u}{2(t-u)^2}du + \int_0^s \frac {dh^{\delta}}{ds} (0,u)du)\right).$$
We do not need to compute the last term in the exponential, indeed it is a deterministic term, and we now that the Radon derivative is a martingale, therefore its value is imposed by this condition. Recalling that by definition
 $$dX_u=\delta_u du+2\sqrt{X_u}dW_u,$$ and replacing in the above expression, we have

 $$\left.\frac{d\Qc^{\delta,t}_{x\to 0}}{d\Qc^\delta_x}\right|_{\mathcal{F}_s}=\exp\left(-\int_0^s \frac{\sqrt{X_u}}{(t-u)}dW_u,+ \int_0^s \frac{X_u}{2(t-u)^2}du + \text{ a deterministic function of s}\right).$$
As $X_s-\int_0^s \delta_u du$ is a martingale (under $\Qc^\delta_x$), and keeping in mind that the deterministic term is imposed by the martingale condition, we have:
$$\left.\frac{d\Qc^{\delta,t}_{x\to 0}}{d\Qc^\delta_x}\right|_{\mathcal{F}_s}=\mathcal{E}\left(-\int_0^s \frac{\sqrt{X_u}}{(t-u)}dW_u\right).$$
Girsanov's Theorem then implies that, under $\Qc^{\delta,t}_{x\to 0},$
$$dX_u=\delta_u du+2\sqrt{X_u}dW_u-\frac{2X_u}{t-u} du.$$
Therefore we obtain the result by an application of the Comparison Theorem of \cite{WataComp}.
\end{proof}
We deduce from Proposition \ref{PropLaplPont} and Theorem \ref{ThmBridge} the following
\begin{Corollary}\label{ThmLaplPont}
Let for $\mu$ a Radon measure on $[0,t],$ $I=\int X d\mu$, $A_0(\alpha)$, $B_0^{\alpha}(u):\R^+\rightarrow \R$  as in Proposition \ref{PropLaplPont}, and $\hat{A}_0(\alpha)$ be the $A_0(\alpha)$ for the image of $\mu$ under the map $s\to (t-s)$, let also ${\bf B}_0(\alpha)=\exp\left\{\int_{0}^{t}B_{0}^\alpha(u) du\right\}$ then we have

$$\mathcal{Q}_{x\to y}^{\delta,t}(e^{-\alpha I})=A_0(\alpha)^x \hat{A}_0(\alpha)^y\exp\left\{\int_{0}^{t}B_{0}^\alpha(u)\delta_{u}du\right\}\left(\sum_{n=0}^{\infty} b_{\delta,x,y}^t(n) {\bf B}_0(\alpha)^{4n}\right).$$
\end{Corollary}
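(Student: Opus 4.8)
The plan is to read the identity off directly from the extended Bessel bridge decomposition of Theorem \ref{ThmBridge}, exploiting that $I=\int X\,d\mu$ is a \emph{linear} functional of the trajectory, so that $e^{-\alpha I}$ turns a convolution ($\oplus$) of laws into a product of Laplace transforms. Concretely, Theorem \ref{ThmBridge} realizes a path $X$ of law $\Qc^{\delta,t}_{x\to y}$ as a sum $X=X^{(1)}+X^{(2)}+X^{(3)}+X^{(4)}$ of four independent pieces: $X^{(1)}\sim\Qc^{0,t}_{x\to0}$, $X^{(2)}\sim\Qc^{0,t}_{0\to y}$, $X^{(3)}\sim\Qc^{\delta,t}_{0\to0}$, and $X^{(4)}$ which, conditionally on an auxiliary variable $N\sim b^t_{\delta,x,y}$, has law $\Qc^{4N,t}_{0\to0}$. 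Writing $I^{(j)}=\int X^{(j)}\,d\mu$ we have $I=I^{(1)}+I^{(2)}+I^{(3)}+I^{(4)}$, and since $X\ge 0$, $\mu\ge 0$, $\alpha>0$, each $e^{-\alpha I^{(j)}}$ lies in $[0,1]$; independence then yields
$$\Qc^{\delta,t}_{x\to y}\!\left(e^{-\alpha I}\right)=\Qc^{0,t}_{x\to0}\!\left(e^{-\alpha I}\right)\cdot\Qc^{0,t}_{0\to y}\!\left(e^{-\alpha I}\right)\cdot\Qc^{\delta,t}_{0\to0}\!\left(e^{-\alpha I}\right)\cdot\sum_{n=0}^{\infty}b^t_{\delta,x,y}(n)\,\Qc^{4n,t}_{0\to0}\!\left(e^{-\alpha I}\right),$$
where the interchange of expectation and the sum over $n$ is justified by dominated convergence, all summands being bounded by the weights $b^t_{\delta,x,y}(n)$.

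It then remains to identify each of the four factors via Proposition \ref{PropLaplPont}. Taking $\delta\equiv 0$ there gives $\Qc^{0,t}_{x\to0}(e^{-\alpha I})=A_0(\alpha)^x$. For the second factor I would invoke the time-reversal Lemma: $\Qc^{0,t}_{0\to y}$ is the image of $\Qc^{0,t}_{y\to0}$ under $s\mapsto t-s$ (here $\hat\delta=0$), and under this reversal $\int_0^t X_u\,d\mu(u)$ becomes $\int_0^t X_u\,d\hat\mu(u)$ with $\hat\mu$ the push-forward of $\mu$ by $s\mapsto t-s$; hence $\Qc^{0,t}_{0\to y}(e^{-\alpha I})$ equals the bridge-to-zero Laplace transform for the measure $\hat\mu$, that is $\hat A_0(\alpha)^y$ by Proposition \ref{PropLaplPont} with $\delta\equiv 0$. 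Taking $x=0$ in Proposition \ref{PropLaplPont} gives $\Qc^{\delta,t}_{0\to0}(e^{-\alpha I})=\exp\{\int_0^t B_0^\alpha(u)\delta_u\,du\}$; and taking $x=0$ together with the \emph{constant} dimension $\delta\equiv 4n$ gives $\Qc^{4n,t}_{0\to0}(e^{-\alpha I})=\exp\{\int_0^t 4n\,B_0^\alpha(u)\,du\}={\bf B}_0(\alpha)^{4n}$ by the very definition of ${\bf B}_0(\alpha)$. Substituting these four evaluations into the displayed product yields exactly the claimed formula.

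The only genuinely delicate point is the bookkeeping around the $\oplus$ notation and the mixture term: one must make sure that $\oplus$ in Theorem \ref{ThmBridge} indeed denotes the law of an independent sum (so that linearity of $I$ produces the product structure), and that the fourth ingredient is a \emph{mixture} of bridge laws over the probability weights $b^t_{\delta,x,y}(n)$, so that conditioning on $N$ is legitimate and the factorization survives. Once this is granted, the argument reduces to four direct applications of Proposition \ref{PropLaplPont} plus the time-reversal Lemma, and convergence of the series in ${\bf B}_0(\alpha)$ is automatic from $0\le e^{-\alpha I^{(4)}}\le 1$, in agreement with (and independently of) the quantitative bound of Proposition \ref{Eqprop_domination}.
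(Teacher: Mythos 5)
Your proposal is correct and is exactly the deduction the paper intends: the paper gives no separate argument beyond "we deduce from Proposition \ref{PropLaplPont} and Theorem \ref{ThmBridge}", and you have filled in precisely that route — factorizing $e^{-\alpha I}$ over the four independent summands of the bridge decomposition and evaluating each factor by specializing Proposition \ref{PropLaplPont} (with $\delta\equiv 0$, with time reversal for the $0\to y$ piece, with $x=0$, and with $x=0$, $\delta\equiv 4n$). Your added care about the meaning of $\oplus$, the mixture over $b^t_{\delta,x,y}(n)$, and the interchange of sum and expectation is all sound.
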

We now give explicit computations when $\mu$ is the Lebesgues measure on ${[0,t]}.$
\begin{Theorem}\label{Prop-Laplace}Let $X$ be a $GBESQ^\delta_x$ and let $\beta=\sqrt{2\alpha}t$, then we have 
\begin{multline*}\mathcal{Q}_{x\to y}^{\delta,t}(e^{-\alpha \int_0^t X_s ds})=\exp\left\{\frac{(x+y)}{2t}(1-\beta\coth\beta)\right\}\\ \exp\left\{\int_0^t\left(\frac{1}{2(t-u)}-\frac{\sqrt{2\alpha}}{2}\coth(\sqrt{2\alpha}(t-u))\right)\delta_u du\right\}
\left(\sum_{n=0}^{\infty} b_{\delta,x,y}^t(n) \left(\frac{\beta}{\sinh{\beta}}\right)^{2n}\right).\end{multline*}
\end{Theorem}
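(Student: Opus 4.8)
The plan is to apply Corollary \ref{ThmLaplPont} with $\mu$ the Lebesgue measure on $[0,t]$ and $\alpha$ fixed, which reduces the whole statement to identifying explicitly the three ingredients $A_0(\alpha)$, $\hat A_0(\alpha)$, and the function $B_0^\alpha(u)$ (hence also ${\bf B}_0(\alpha)$) attached to this particular $\mu$. Since Lebesgue measure is invariant under $s\mapsto t-s$, one immediately gets $\hat A_0(\alpha)=A_0(\alpha)$, which already explains the symmetric prefactor $\exp\{\frac{x+y}{2t}(1-\beta\coth\beta)\}$ once we show $A_0(\alpha)=\exp\{\frac{1}{2t}(1-\beta\coth\beta)\}$ with $\beta=\sqrt{2\alpha}\,t$.

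\textbf{Computing $A_0(\alpha)$.} I would recover $A_0(\alpha)$ from the known (constant-dimension) squared Bessel bridge literature: by Proposition \ref{PropLaplPont} the dependence on $x$ is $(A_0(\alpha))^x$ and is insensitive to $\delta$, so it suffices to take $\delta\equiv 0$ and compute $\mathcal Q^{0,t}_{x\to 0}[\exp(-\alpha\int_0^t X_s\,ds)]$. For $t=1$ this is exactly formula (5.d$'$) of \cite{PY}; the general $t$ follows by the scaling Proposition \ref{PropoScaling1} applied to the bridge (equivalently, rescale time by $t$ and space by $t$, which turns $\alpha\int_0^t X_s\,ds$ into $\alpha t^2\int_0^1 \tilde X_s\,ds$, i.e.\ replaces $\sqrt{2\alpha}$ by $\sqrt{2\alpha}\,t=\beta$ and divides the exponent by $t$). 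This yields $A_0(\alpha)=\exp\{\tfrac{1}{2t}(1-\beta\coth\beta)\}$.

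\textbf{Computing $B_0^\alpha(u)$.} For the $\delta$-linear part I would again use that it does not depend on $x$, so take $x=0$ and use the additive, $\delta$-linear structure furnished by Proposition \ref{PropLaplPont}: it is enough to evaluate $\phi(0,\delta)$ on test functions $\delta=\chi_{[b,t]}$, exactly as in the derivation of Corollary \ref{Laplace}. For such $\delta$ and $x=0$ the bridge $\mathcal Q^{\chi_{[b,t]},t}_{0\to 0}$ is identically zero on $[0,b]$ and on $[b,t]$ is a $1$-dimensional squared Bessel bridge from $0$ to $0$ of length $t-b$, whose Laplace transform $E[\exp(-\alpha\int_0^{t-b} r_s\,ds)]$ with $r$ a BESQ$^1$ bridge is the classical $\left(\frac{\sqrt{2\alpha}(t-b)}{\sinh(\sqrt{2\alpha}(t-b))}\right)^{1/2}$ (see \cite{PY} or \cite{RY}, Chap.\ XI). Taking the logarithm, differentiating with respect to $b$ and matching against $\int_b^t B_0^\alpha(u)\,du$ — the same bookkeeping used just before Corollary \ref{Laplace} — gives $B_0^\alpha(u)=\frac{1}{2(t-u)}-\frac{\sqrt{2\alpha}}{2}\coth(\sqrt{2\alpha}(t-u))$, which is precisely the integrand in the statement. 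The same computation with $b\to 0$ also gives ${\bf B}_0(\alpha)=\exp\{\int_0^t B_0^\alpha(u)\,du\}=\frac{\beta}{\sinh\beta}$, whence ${\bf B}_0(\alpha)^{4n}=(\beta/\sinh\beta)^{2n}$ wait — one must be careful: $\mathcal Q^{4n,t}_{0\to 0}$ has dimension $4n$, so it contributes ${\bf B}_0(\alpha)^{4n}$ only if ${\bf B}_0(\alpha)$ is the per-unit-dimension factor; since a BESQ$^{4n}$ bridge is the sum of $4n$ independent BESQ$^1$ bridges, its Laplace transform is the $4n$-th power of $(\sqrt{2\alpha}\,t/\sinh(\sqrt{2\alpha}\,t))^{1/2}=(\beta/\sinh\beta)^{1/2}$, i.e.\ $(\beta/\sinh\beta)^{2n}$, matching the statement. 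Finally, substituting $A_0(\alpha)=\hat A_0(\alpha)=\exp\{\frac{1}{2t}(1-\beta\coth\beta)\}$, the formula for $B_0^\alpha$, and ${\bf B}_0(\alpha)^{4n}=(\beta/\sinh\beta)^{2n}$ into Corollary \ref{ThmLaplPont} gives the claimed identity.

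\textbf{Main obstacle.} The conceptual content is already packaged in Corollary \ref{ThmLaplPont}; the only real work is the explicit evaluation of the constant-dimension squared Bessel \emph{bridge} Laplace transforms, and — more delicately — checking the normalization/exponent bookkeeping: making sure the scaling in $t$ is applied consistently (so that $\beta=\sqrt{2\alpha}\,t$ appears where expected), and that the dimension-$4n$ bridge contributes the $2n$-th (not $4n$-th) power of $\beta/\sinh\beta$ because the relevant building block is a BESQ$^1$ bridge with Laplace transform carrying an exponent $1/2$. Verifying that $B_0^\alpha$ obtained from the $\chi_{[b,t]}$ test functions indeed integrates to $\log(\beta/\sinh\beta)$ is the one spot where a sign or a factor of $2$ is easy to mishandle, so I would double-check it by the independent consistency check $\int_0^t B_0^\alpha(u)\,du=\log{\bf B}_0(\alpha)$ against the direct $b\to0$ computation.
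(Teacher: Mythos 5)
Your proposal is correct and follows essentially the same route as the paper: both identify $A_0(\alpha)$, $\hat A_0(\alpha)=A_0(\alpha)$ (by symmetry of Lebesgue measure), and $B_0^\alpha(u)$ by plugging the classical constant-dimension bridge formula of \cite{RY}/\cite{PY} into Corollary \ref{ThmLaplPont}, rescaling from $t=1$, and differentiating over test functions $\chi_{[b,t]}$. The only blemish is your intermediate claim ${\bf B}_0(\alpha)=\beta/\sinh\beta$, which should be $(\beta/\sinh\beta)^{1/2}$ (as in the paper's \eqref{EqBLaplace}); you catch and fix the exponent bookkeeping yourself, so the final identity ${\bf B}_0(\alpha)^{4n}=(\beta/\sinh\beta)^{2n}$ and hence the theorem come out right.
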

\begin{proof}
Here $\mu$ is invariant under the map $s\to (t-s),$ so that $\hat{A}_0(\alpha)=A_0(\alpha).$ Corollary 3.3 on page 465 of \cite{RY} gives for $t=1$, $b \in \R^+$ and constant $\delta$ that
$$
\Qc^{\delta,1}_{x\to 0}\left[\exp\left(-\frac{b^2}{2}\int_0^1X_sds\right)\right]=\left(\frac{b}{\sinh b}\right)^{\frac{\delta}{2}}\exp\left(\frac{x}{2}(1-b \coth b)\right).
$$
Taking first $\alpha:=\frac{b^2}{2}$, then although this expression is given for $t=1$ we can extend it to $[0,t]$ using the fact that, for $\delta$ constant,
$$\mathcal{Q}_{x\to 0}^{\delta,t}\left(e^{-\alpha \int_0^t X_s ds}\right)=\mathcal{Q}_{x\to 0}^{\delta,t}\left(e^{-\alpha t\int_0^{1} X_{st} ds}\right)=\mathcal{Q}_{x/t\to 0}^{\delta,1}\left(e^{-\alpha t^2\int_0^{1} X_{s} ds}\right),$$
where first, we made a change of variable and then used the scaling of Bessel processes. Hence taking $y=0$ and $\delta =0$, we obtain the expression of $A_0(\alpha)$
\begin{equation}\label{EqALaplace}
A_0(\alpha)=\exp\frac{1}{2t}\left\{1-\sqrt{2\alpha}t \coth{\sqrt{2\alpha}t}\right\}.
\end{equation}
And taking $y=0$ and $x=0$ we obtain the expression of  ${\bf B}_0(\alpha)$
\begin{equation}\label{EqBLaplace}
{\bf B}_0(\alpha)=\sqrt{\frac{\sqrt{2\alpha}t}{\sinh{\sqrt{2\alpha}t}}}.
\end{equation}
In order to compute $B_{0}^\alpha(u),$ we take as before test functions for $\delta,$ namely we assume $\delta=\chi_{[s,t]},$ with $0<s<t$.
We have by the same arguments as before
$$\mathcal{Q}_{0\to 0}^{1,1}\left(e^{-\alpha \int_0^t X_s ds}\right)=\mathcal{Q}_{0\to 0}^{1,(t-s)}\left(e^{-\alpha \int_0^{t-s} X_u du}\right)=\mathcal{Q}_{0\to 0}^{1,(t-s)}\left(e^{-\alpha (t-s)\int_0^{1} X_{v(t-s) dv}}\right).$$
By scaling, this is equal to ${\bf B}_0^{1}(\alpha(t-s)^2),$ where ${\bf B}_0^{1}(\alpha)$ is the value of ${\bf B}_0(\alpha)$ for $t=1$.
On the other hand it is equal to $\exp{(\int_s^tB_0^\alpha(u)du)} .$ Then straightforward computations give 
$$B_0^\alpha(u)=\frac{1}{2(t-u)}-\frac{\sqrt{2\alpha}}{2}\coth(\sqrt{2\alpha}(t-u)).$$
This, together with the previous computations, gives the result. Note that $B_0^\alpha(u)$ is not defined for $u=t$, however it is clearly integrable on $[0,t]$.\end{proof}
\begin{Remark}
The previous computations may be easily applied to get formulas for general additive functionals of the GEBSQ. We skip for the moment these tedious computations, but give the complete result in Appendix \ref{LTAF}
\end{Remark}
\section{Some applications}
As we said in the introduction, our work was motivated by applications to Finance. Indeed, our class of $GBESQ$ contains various financial models. Hence we can give an answer to many financial mathematical problems using our Bessel bridge decomposition and Laplace transform formula results applied to particular $GBESQ$. We begin thus by giving some financial models which are in the class of $GBESQ$.
\begin{description}
\item[Ornstein-Uhlenbeck (OU):] Let $(V_t)_{t\in [0,T]}$ following an Ornstein-Uhlenbeck model given by
\begin{equation}\label{EqOU}
dV_t=\mu V_t dt + \sigma dW_t \quad \textrm{with } \quad V_0=x.
\end{equation}
Then we have the classical representation in term of $GBESQ$ given by
\begin{equation}\label{RelationOU}
V_t=e^{\mu t}\left[X\left(\frac{-\sigma^2}{2\mu}\left(e^{-2\mu t}-1\right)\right)\right]^{\frac{1}{2}}
\end{equation}
and $X$ is a $GBESQ^{\delta,\tilde{x}}$ with dimension $\delta=1$ and initial value $\tilde{x}=x^2$.

\item[Cox Ingersoll Ross (CIR):] Let $(V_t)_{t\in [0,T]}$ be a CIR process then it is the solution of the stochastic differential equation
$$
dV_t=(\alpha-\beta V_t)dt+\sigma\sqrt{V_t}dW_t \quad \textrm{with} \quad V_0=x.
$$ 
where $\alpha\in \R^+$, $\beta\in \R$, $\sigma>0$. Then we have by a time space transformation the classical representation in term of $GBESQ$ given by
\begin{equation}\label{RelationCIR}
V_t=e^{-\beta t}X\left(\frac{\sigma^2}{4\beta}\left(e^{\beta t}-1\right)\right)
\end{equation}
and $X$ is a $GBESQ^{\delta,x}$ such that 
$$
dX_t=\delta dt +2\sqrt{X_t}d\tilde{W}_t
$$
with a constant dimension $\delta=\frac{4\alpha}{\sigma^2}$ and $\tilde{W}\left(\frac{\sigma^2}{4\beta}\left(e^{\beta t}-1\right)\right)=\int_0^t\frac{\sigma}{2}e^{\frac{\beta s}{2}}dW_s$.
\item[Constant elasticity of variance model (CEV):] Let $(V_t)_{t\in [0,T]}$ following a Constant elasticity of variance model (CEV) given by
\begin{equation}\label{EqCEV}
dV_t=\mu V_t dt + \sigma V_t^\rho dW_t \quad \textrm{with } \quad 0 \leq \rho <1 \quad \textrm{and} \quad V_0=x.
\end{equation}
where $\mu$ represents the instantaneous mean, $\sigma V_t^\rho$ the instantaneous variance of V and $\rho$ is called the elasticity factor. In the limiting case, $\rho=1$, the CEV model returns to the classical Black and Scholes model. In the case where $\rho=0$, it is the Ornstein-Uhlenbeck model and if $\rho=\frac{1}{2}$, it is a Cox-Ingersoll-Ross (CIR) model without mean reverting. Then we obtain that
\begin{equation}\label{RelationCEV}
V_t=e^{\mu t}\left[X\left(\frac{(\rho-1)\sigma^2}{2\mu}\left(e^{2(\rho-1)\mu t}-1\right)\right)\right]^{\frac{1}{2(1-\rho)}}
\end{equation}
and $X$ is a $GBESQ^{\delta,\tilde{x}}$ with dimension $\delta=\frac{2\rho-1}{\rho-1}$ and initial value $\tilde{x}=x^{-2(\rho-1)}$.

\item[Extended CIR:] Let $(V_t)_{t\in [0,T]}$ following an extended Cox Ingersoll Ross model which is given by the solution of the stochastic differential equation (SDE)
\begin{equation}\label{ExCIR}
dV_t=(\alpha_t-\beta_t V_t)dt+\sigma_t\sqrt{V_t}dW_t \quad \textrm{with} \quad V_0=x.
\end{equation}
where $\alpha_t,\beta_t\geq 0$ and $\sigma_t>0$ are time-varying functions and $\sigma_t $ is continuously differentiable with respect to $t\in [0,T]$.
Now, we recall Corollary 3.1 of \cite{SHI02}.
\begin{Corollary}
We have
\begin{equation}\label{RelationCIRextended}
V_t=\exp\left(-\int_0^t\beta_udu\right)X\left(\frac{1}{4}\int_0^t\frac{\sigma_u^2}{\theta_u}du\right),
\end{equation}
where $X$ is a $GBESQ^{\delta,\tilde{x}}$ with time varying dimension $\delta_t=\frac{4(\alpha\circ\nu^{-1})_t}{(\sigma^2\circ\nu^{-1})_t}$ with $\nu_t:=\frac{1}{4}\int_0^t\frac{\sigma_u^2}{\theta_u}du$, $\theta_t=\exp\left(-\int_0^t\beta_udu\right)$ and initial value $\tilde{x}:=\frac{x}{\theta_0}=x$.
\end{Corollary}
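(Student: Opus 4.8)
The plan is to establish \eqref{RelationCIRextended} by a \emph{deterministic} space transformation that removes the mean-reversion term $\beta_t$, followed by a \emph{deterministic} time change that normalises the diffusion coefficient to the Bessel form $2\sqrt{\,\cdot\,}$, and then to invoke uniqueness. I would argue in the synthetic direction. Set $\theta_t=\exp\!\left(-\int_0^t\beta_u\,du\right)$, so that $\theta$ is $C^1$, $\theta_0=1$ and $\theta_t'=-\beta_t\theta_t$; set $\nu_t=\tfrac14\int_0^t\tfrac{\sigma_u^2}{\theta_u}\,du$, which, since $\sigma_t>0$ and $\theta_t>0$, is a strictly increasing $C^1$ bijection of $[0,T]$ onto $[0,\nu_T]$ with $\nu_t'=\tfrac{\sigma_t^2}{4\theta_t}>0$. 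The continuity of $\sigma$ moreover guarantees that $\delta_s=4(\alpha\circ\nu^{-1})_s/(\sigma^2\circ\nu^{-1})_s$ lies in $D$, so that a $GBESQ^{0,\delta}_{\tilde x}$ in the sense of Definition~\ref{Def1}, with $\tilde x=x/\theta_0=x$, is well defined and unique, and positive by Proposition~\ref{ThmCompa} and Corollary~\ref{CoroPositive}. Let $X$ be such a process, $dX_s=\delta_s\,ds+2\sqrt{X_s}\,d\tilde W_s$, and put $V_t:=\theta_t X_{\nu_t}$.

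The verification then runs as follows. Since $s\mapsto\tilde W_s$ is a Brownian motion and $\nu$ is deterministic and strictly increasing, $t\mapsto\tilde W_{\nu_t}$ is a continuous martingale with bracket $\nu_t$, so by the L\'evy characterisation (equivalently Dambis--Dubins--Schwarz) there is an $(\Fc_t)$-Brownian motion $W$ with $\tilde W_{\nu_t}=\int_0^t\sqrt{\nu_u'}\,dW_u$, whence $d(X_{\nu_t})=\delta_{\nu_t}\nu_t'\,dt+2\sqrt{X_{\nu_t}}\,\sqrt{\nu_t'}\,dW_t$. Applying the product rule to $V_t=\theta_t X_{\nu_t}$ and using the three identities $\theta_t'X_{\nu_t}=-\beta_t V_t$; $\theta_t\delta_{\nu_t}\nu_t'=\theta_t\cdot\tfrac{4\alpha_t}{\sigma_t^2}\cdot\tfrac{\sigma_t^2}{4\theta_t}=\alpha_t$, where $\delta_{\nu_t}=4\alpha_t/\sigma_t^2$ because $\nu^{-1}(\nu_t)=t$; and $2\theta_t\sqrt{X_{\nu_t}}\sqrt{\nu_t'}=\sigma_t\sqrt{\theta_t X_{\nu_t}}=\sigma_t\sqrt{V_t}$, one obtains exactly $dV_t=(\alpha_t-\beta_t V_t)\,dt+\sigma_t\sqrt{V_t}\,dW_t$ with $V_0=\theta_0\tilde x=x$. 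Since $v\mapsto\sigma_t\sqrt v$ is $\tfrac12$-H\"older and the drift is affine in $v$, the SDE \eqref{ExCIR} has a unique solution (by the existence and uniqueness results of Section~1.2.1), so $V$ coincides with the extended CIR process, which is precisely the content of \eqref{RelationCIRextended}. An equivalent route is the reverse one: starting from the CIR process, set $Y_t=V_t/\theta_t$, check that $dY_t=\tfrac{\alpha_t}{\theta_t}\,dt+\tfrac{\sigma_t}{\sqrt{\theta_t}}\sqrt{Y_t}\,dW_t$, and then time-change by $\nu$ to land on the Bessel SDE; it rests on the same algebraic identities.

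The step I expect to be the only genuine obstacle is the time-change bookkeeping: one must check that, the change being deterministic and strictly increasing, no localisation pathology arises; that the reconstructed process $W$ is genuinely a Brownian motion with respect to a right-continuous filtration; and that the integrals $\int_0^{\cdot}\sqrt{\nu_u'}\,dW_u$ and $\int_0^{\cdot}\delta_u\,du$ are finite on $[0,T]$ — which is exactly where $\sigma_t>0$, the continuity (and $C^1$) assumption on $\sigma$, and the membership $\delta\in D$ are used. Everything else is the routine It\^o verification sketched above.
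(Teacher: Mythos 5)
Your verification is correct, but it is worth pointing out that the paper gives no proof at all for this statement: it is simply recalled as Corollary 3.1 of Shirakawa \cite{SHI02}. So your computation is a genuine, self-contained substitute rather than a variant of the paper's argument, and the algebra is right: the three identities $\theta_t' X_{\nu_t}=-\beta_t V_t$, $\theta_t\delta_{\nu_t}\nu_t'=\alpha_t$ and $2\theta_t\sqrt{\nu_t'}=\sigma_t\sqrt{\theta_t}$ do reproduce \eqref{ExCIR} exactly. Three small remarks. First, invoking L\'evy/Dambis--Dubins--Schwarz is heavier than needed: $\nu$ is deterministic and strictly increasing, so $t\mapsto\tilde W_{\nu_t}$ is already a continuous Gaussian martingale with bracket $\nu_t$ and $W_t:=\int_0^t(\nu_u')^{-1/2}\,d\tilde W_{\nu_u}$ is a Brownian motion by direct computation of its bracket; no localisation issue can arise. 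Second, with $\beta\in D$ the function $\theta$ is only absolutely continuous (piecewise $C^1$), not $C^1$; this changes nothing in the deterministic integration by parts but the claim should be stated accordingly. Third, and this is the only substantive caveat: your synthetic direction concludes by uniqueness for \eqref{ExCIR}, but the existence/uniqueness proposition of Section 1.2.1 covers diffusion coefficients of the form $\sigma(X_u)$ satisfying \eqref{ConditionVol}, not time-dependent coefficients $\sigma_t\sqrt{v}$; you would need the time-inhomogeneous Yamada--Watanabe criterion (which does apply here since $\sigma$ is continuous, hence bounded on $[0,T]$, giving \eqref{ConditionVol} uniformly in $t$). The reverse route you sketch --- set $Y=V/\theta$, time-change by $\nu$, and identify the result with the $GBESQ^{0,\delta}_{x}$ of Definition~\ref{Def1}, whose uniqueness the paper does establish --- sidesteps this entirely and is the cleaner of your two options.
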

\end{description}
Hence, the class of GBESQ contains a large class of standard (financial) models

\begin{Remark}\label{RemarkGBESQDrifte}
We see from these example that one needs in many occasions to evaluate the law of functionals of the form $\int_0^t g(s)X_{f(s)}ds$ for various functions $g$ and $f$ (with $f$ increasing), for $X$ being a GEBSQ or a Generalized Bessel bridge. Laplace transform of these quantities can be computed using the results of Remark \ref{LaplGEBSQ} or alternatively Appendix \ref{LTAF}, using the simple argument that, by change of variable,
$$\int_0^t g(s)X_{f(s)}ds=\int_{f(0)}^{f(t)}\frac{g(f^{-1}(s)))}{f^{'}(f^{-1}(s))}X_s ds.$$

\end{Remark}

\subsection{Generating values of $X_t$ given $X_0$ and its Laplace transform.}\label{SecGenerating}

Let $X$ be a $GBESQ^{\delta,x}$. Our aim is to obtain a sample of generating values of $X_t$ for $t\in [0,T]$ assuming that with know $X_0=x$ and that we have an explicit formula of the Laplace transform of $X_t$. A classic way to do this is to inverse the Laplace transform formula. But in our case the Laplace transform formula we obtained in Corollary \ref{Laplace} is not easy to inverse. So we need an other method to generate a sample of values of $X_t$.  One can use for example the method developed by Ridout in \cite{Ri} which allows to generate sample of random numbers from a distribution specified by its Laplace transform (without having to inverse the Laplace transform). Hence, as we can evaluate explicitly the Laplace transform of X assuming $X_0=x$ with our Corollary \ref{Laplace} and simplification given by Remark \ref{RemarkLaplaceSimplifier}, one can apply the method of Ridout in \cite{Ri} to obtain a sample of values of $X_t$ for any $t\in [0,T]$.

This method also permits to jointly simulate $X_t$ (for a fixed $t$) and any additive functional of $X_s, \,0\le s\le t$ without simulating the whole process. To do this first simulate $X_t$ through its Laplace transform, then plug this value into the formula of Appendix \ref{LTAF} and repeat the process. We give an example in Section \ref{Simulationcouple}

\subsection{Simulation of integrate process conditioning to its starting and ending points.}\label{Simulationcouple}
In this application, we follow results found by Broadie and Kaya in \cite{Broadie} and extended by Glasserman and Kim in \cite{Gla}. We will apply the same idea but using our more general framework. Indeed, assume that we would like to simulate a stochastic volatility model given for all $t\in [0,T]$ , $\delta \in D$ by
\begin{eqnarray}\label{EqS}
dS_t&=&\mu_tS_tdt+\sqrt{V_t}\left(\rho dW^1_t+\sqrt{1-\rho^2}dW^2_t\right),\quad S_0=s,\\
dV_t&=&\delta_tdt+2\sqrt{V_t}dW^1_t, \quad V_0=x.
\end{eqnarray}
where $W^1$ and $W^2$ are two independent standard Brownian motion, $X_t$ is a Markov jump process on finite space $\Sc:=\{1,2,\dots,K\}$ and $\rho \in [-1,1]$. So the stochastic volatility process is a $GBESQ^{\delta,x}$. Let for all $t\in [0,T]$ the solution of the first stochastic differential equation in \eqref{EqS}:
$$
S_t=S_0\exp\left(\int_0^t \mu_sds -\frac{1}{2}\int_0^t V_sds +\rho \int_0^t \sqrt{V_s}dW^1_s +\sqrt{1-\rho^2}\int_0^t \sqrt{V_s}dW_s^2\right).
$$
Moreover $\int_0^t \sqrt{V_s}dW^1_s=\frac{1}{2}\left(V_t-V_0-\int_0^t\delta_sds\right)$. Then we have that $\log\left(\frac{S_t}{S_0}\right)$ is conditionally normal given $\int_0^t V_sds$, $V_t$ and $V_0$. Hence
$$
\log\left(\frac{S_t}{S_0}\right)\sim \Nc\left(\int_0^t \mu_sds -\frac{1}{2}\int_0^t V_sds-\frac{\rho}{2}\left(V_t-V_0-\int_0^t\delta_sds\right),(1-\rho^2)^2\int_0^t V_sds\right).
$$
Moreover we can sample $V_t$ using again method stated in subsection \ref{SecGenerating}. Hence we can obtain a sample of couple $(V_0,V_t)$. Then the problem of exact simulation of our model $\eqref{EqS}$ is reduced to the problem of sampling from 
$$
\left(\int_0^t V_sds\vert V_0,V_t\right).
$$

Since $V$ is a $GBESQ^\delta_x$, we know by Theorem \ref{ThmBridge} that for every $\delta\in\D,$ $x\in \R^+$, $y\in \R^+$, there exist processes $X_1$, $X_2$, $X_3$ and $X_4$ such that
\begin{equation}\label{EqdecoGla}
\left(\int_0^t V_sds\vert V_0=x,V_t=y\right)=X_1+X_2+X_3+X_4,
\end{equation}
where
$X_1 \sim \mathcal{Q}^{0,t}_{x\to 0}$, $X_2\sim \mathcal{Q}^{0,t}_{0\to y}$, $X_3\sim \mathcal{Q}^{\delta,t}_{0\to 0}$ and $X_4\sim \sum_{n=0}^\infty b_{\delta,x,y}^t(n)\mathcal{Q}_{0\to 0}^{4n,t}$. Moreover, depending on the $GBESQ$ $V$ chosen, we can use Theorem \ref{Prop-Laplace} or Remark \ref{RemarkGBESQDrifte}  and Proposition \ref{Eqprop_domination} to have a way of sampling from $\left(\int_0^t V_sds\vert V_0,V_t\right)$; and so an exact simulation of our stochastic model \eqref{EqS}.

\subsection{Evaluation of Bond price in a credit notation model with regime switching}\label{EvalBond}
Assume that we are under an equivalent risk neutral probability $\tilde{P}$, then from the general asset pricing theory, we have that the discount bond price $B(T)$ at time $0$ with maturity $T>0$ of an interest rate $r$ is given by
\begin{equation}\label{Bond}
B(T)=\tilde{\E}\left[\exp\left(-\int_0^T r_sds\right)\vert r_0=r\right]
\end{equation}

Let the process $r$ be given by the solution of the stochastic differential equation given by \eqref{ExCIR}, we assume that the value of the parameters $(\alpha_t,\sigma_t)$ is function for all $t\in [0,T]$ of the value of an exogenous continuous time on finite state Markov chain $(S)_{t\in [0,T]}$. Moreover, we assume that $\beta_t\equiv 0$. Hence the Markov chain $S$ takes value in $\Sc:=\{1,2,\dots ,N\}$ and could represent the credit notation of a firm A given by a credit notation entity. Then the process $r$ could be the spread of the firm A and we would like to evaluate the price at time $t\in [0,T]$ of a bond of this firm with maturity $T>0$. (See Goutte and Ngoupeyou in \cite{GN} for more details about this defaultable regime switching modeling).
Hence our model becomes
\begin{equation}\label{CIRRS}
dr_t=\alpha(S_t)dt+\sigma(S_t)\sqrt{r_t}dW_t,\quad r_0=r
\end{equation}
and so $\alpha,\beta$ and $\sigma$ are piecewise continuous functions.

\begin{Remark}
\begin{enumerate}
\item The fact that we assume that $S$ is a exogenous Markov chain implies that $S$ is independent of the Brownian motion $W$ for all $t\in [0,T]$.
\item Assuming that $S$ is exogenous has a economic sense, indeed in representation \eqref{CIRRS} there is two source of randomness, firstly the market risk modeled by the Brownian motion $W$ and secondly an external expertise risk factor $S$ given by an external entity of the firm A. This could be the credit notation representing the wealth at time $t\in [0,T]$ of the firm A or an indicator of the wealth of the global international economy at time t.
\end{enumerate}
\end{Remark}
Let us denote by $\bold S$ the historical values of S, i.e. $\bold S:=(S_t; t\in [0,T])$. Using \eqref{RelationCIRextended}, we have that 
$$
r_t=X\left(\frac{1}{4}\int_0^t\sigma_s^2ds\right)
$$
where $X$ is a $GBESQ^{\delta,\tilde{x}}$ with time varying dimension $\delta_t=\frac{4(\alpha\circ\nu^{-1})_t}{(\sigma^2\circ\nu^{-1})_t}$ with $\nu_t:=\frac{1}{4}\int_0^t\sigma_s^2ds$ and initial value $\tilde{x}=r$. Hence we have two different ways to evaluate
$$
B^A(T)=\tilde{\E}\left[\exp\left(-\int_0^T r_sds\right)\vert r_0=r;\bold S\right]
$$
Firstly, we can use the Proposition \ref{PropoCarmona2} and solve the corresponding equation \eqref{EqPhiS}. Or we can use our Bessel Bridge decomposition to jointly sample the final value of the process $X$ and the Laplace transform. Indeed, by Remark \ref{RemarkGBESQDrifte}, we have in this model that
\begin{equation}\label{Eqf}
f(t)= \nu_t:=\frac{1}{4}\int_0^t\sigma_s^2ds\quad \textrm{and} \quad r_t=X_{f(t)}
\end{equation}
and $g(.)\equiv 1$. So we can apply Theorem \ref{ThmA6}, which is the extended version of our Theorem \ref{Prop-Laplace}, to evaluate the Laplace transform of $\int_0^t X_{f(s)}ds$ using our extended Bessel bridge decomposition where we take for Radon measure $\mu$ the measure given for all $t\geq 0$ by:
$$
d\mu(t)=\frac{1}{f^{'}(f^{-1}(t))}dt.
$$

Hence to apply this result, we need to have a sample of value of $r_T=X_{f(T)}$ with respect to the starting point $r_0=X_0:=x$ and the law of the associated $GBESQ$ for $r$. For this we will use the method stated in subsection \ref{SecGenerating} since we have a Laplace transform formula expression given in Corollary \ref{Laplace}. Moreover the quantity $b_{\delta,x,y}^t(n)$ given by Lemma \ref{cond} is in this case simpler since we have Remark \ref{RemarkLaplaceSimplifier}.
 Finally these allow us to obtain an evaluation of the Bond price $B^A(T)$ at time zero of the firm A using Monte Carlo.

\subsection{Simulation of the default time in credit risk model}

We work in the model of construction of Cox processes with a given stochastic intensity $\lambda$. Hence we denote by $\lambda$ a stochastic default intensity of a firm or country A. We assume for all $t\in [0,T]$ that $\int_0^t \lambda_sds<\infty$ and that $\int_0^\infty \lambda_sds=\infty$. Then the default time $\tau$ of the firm or country A is a real random variable which is given by the first time when the increasing process $\int_0^t \lambda_sds$ is below a uniform random variable $U$ independent with $\lambda$. Hence we get
\begin{equation}\label{EqTau}
\tau:=\inf\left\{t\geq 0 ; \exp\left(-\int_0^t \lambda_sds\right)\leq U\right\}.
\end{equation}
So to simulate the default time requires to simulate the stochastic default intensity $\lambda$ together with its time integral. Hence assuming that $\lambda$ is a $GBESQ^{\delta,x}$, we can use again our Bessel bridge decomposition given in Theorem \ref{ThmBridge}, our Laplace transform formula given in Corollary \ref{Laplace} and the method stated in subsection \ref{SecGenerating}, to do this.

\appendix
\section{Laplace transform of additive functionnals of the GEBSQ}
\label{LTAF} In this appendix we give an explicit formula for the Laplace transform of additive functionals of the Generalized Bessel bridge. 
We first recall Corollary \ref{ThmLaplPont}:
\begin{Cita}
Let for $\mu$ a Radon measure on $[0,t],$ $I_\mu=\int X d\mu$, $A_0(\alpha)$, $B_0^{\alpha}(u):\R^+\rightarrow \R$  as in Proposition \ref{PropLaplPont}, and $\hat{A}_0(\alpha)$ be the $A_0(\alpha)$ for the image of $\mu$ under the map $s\to (t-s)$, let also ${\bf B}_0(\alpha)=\exp\left\{\int_{0}^{t}B_{0}^\alpha(u) du\right\}$ then we have

$$\mathcal{Q}_{x\to y}^{\delta,t}(e^{-\alpha I_\mu})=A_0(\alpha)^x \hat{A}_0(\alpha)^y\exp\left\{\int_{0}^{t}B_{0}^\alpha(u)\delta_{u}du\right\}\left(\sum_{n=0}^{\infty} b_{\delta,x,y}^t(n) {\bf B}_0(\alpha)^{4n}\right).$$
\end{Cita}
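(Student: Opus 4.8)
The plan is to combine the pathwise decomposition of Theorem \ref{ThmBridge} with the closed form of Proposition \ref{PropLaplPont} for the bridge to zero, using that the functional $I_\mu=\int X\,d\mu$ is linear in the path, so that the operation $\oplus$ (adding independent processes) turns $e^{-\alpha I_\mu}$ into a product of Laplace transforms.

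First I would represent a process $X\sim\mathcal{Q}^{\delta,t}_{x\to y}$ according to Theorem \ref{ThmBridge} as $X=X^{(1)}+X^{(2)}+X^{(3)}+X^{(4)}$, where $N$ is a random index with $\P(N=n)=b_{\delta,x,y}^t(n)$, the triple $(X^{(1)},X^{(2)},X^{(3)},N)$ is jointly independent with $X^{(1)}\sim\mathcal{Q}^{0,t}_{x\to 0}$, $X^{(2)}\sim\mathcal{Q}^{0,t}_{0\to y}$, $X^{(3)}\sim\mathcal{Q}^{\delta,t}_{0\to 0}$, and conditionally on $N=n$ the summand $X^{(4)}\sim\mathcal{Q}^{4n,t}_{0\to 0}$ is independent of the rest. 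Linearity of $I_\mu$ gives $e^{-\alpha I_\mu(X)}=\prod_{i=1}^4 e^{-\alpha I_\mu(X^{(i)})}$; taking expectations, using the independence structure and the fact that the first three factors do not involve $N$, yields
$$\mathcal{Q}^{\delta,t}_{x\to y}(e^{-\alpha I_\mu})=\mathcal{Q}^{0,t}_{x\to 0}(e^{-\alpha I_\mu})\,\mathcal{Q}^{0,t}_{0\to y}(e^{-\alpha I_\mu})\,\mathcal{Q}^{\delta,t}_{0\to 0}(e^{-\alpha I_\mu})\sum_{n=0}^{\infty}b_{\delta,x,y}^t(n)\,\mathcal{Q}^{4n,t}_{0\to 0}(e^{-\alpha I_\mu}).$$
The interchange of expectation and infinite sum is justified by Tonelli, every term being nonnegative, and the resulting series converges by Proposition \ref{Eqprop_domination}.

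Next I would evaluate the four factors via Proposition \ref{PropLaplPont}. Taking $\delta\equiv 0$ gives $\mathcal{Q}^{0,t}_{x\to 0}(e^{-\alpha I_\mu})=A_0(\alpha)^x$. For the middle factor, $\mathcal{Q}^{0,t}_{0\to y}$ is by definition the time reversal of $\mathcal{Q}^{0,t}_{y\to 0}$, and under $s\mapsto t-s$ the functional $\int X\,d\mu$ becomes $\int X\,d\hat\mu$ with $\hat\mu$ the image of $\mu$ under that map; hence $\mathcal{Q}^{0,t}_{0\to y}(e^{-\alpha I_\mu})=\mathcal{Q}^{0,t}_{y\to 0}(e^{-\alpha I_{\hat\mu}})=\hat A_0(\alpha)^y$, which is precisely $A_0(\alpha)$ computed for the image measure. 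Taking $x=0$ in Proposition \ref{PropLaplPont} gives $\mathcal{Q}^{\delta,t}_{0\to 0}(e^{-\alpha I_\mu})=\exp\{\int_0^t B_0^\alpha(u)\delta_u\,du\}$, and taking $x=0$ with the constant dimension $\delta\equiv 4n$ gives $\mathcal{Q}^{4n,t}_{0\to 0}(e^{-\alpha I_\mu})=\exp\{4n\int_0^t B_0^\alpha(u)\,du\}={\bf B}_0(\alpha)^{4n}$. Substituting these four expressions into the display above produces the claimed identity.

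The argument is essentially bookkeeping; the only points requiring care are that the linearity of $I_\mu$ genuinely lets $\oplus$ pass to a product of Laplace transforms (immediate, since $I_\mu$ is additive under pathwise sum) and the time-reversal identification of the $y$-dependent factor, where one must keep in mind the convention that $\mathcal{Q}^{0,t}_{0\to y}$ is the reversal of $\mathcal{Q}^{0,t}_{y\to 0}$ together with the reversal Lemma relating $\hat{\mathcal{Q}}^{\delta,t}_{x\to y}$ and $\mathcal{Q}^{\hat\delta,t}_{y\to x}$. Neither is a real obstacle, so the corollary follows at once from Theorem \ref{ThmBridge} and Proposition \ref{PropLaplPont}.
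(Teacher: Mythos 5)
Your proposal is correct and is exactly the deduction the paper intends (the paper states the corollary follows "from Proposition \ref{PropLaplPont} and Theorem \ref{ThmBridge}" without writing out the details): decompose the bridge via Theorem \ref{ThmBridge}, use additivity of $I_\mu$ and independence to factor the Laplace transform, and evaluate each factor with Proposition \ref{PropLaplPont}, handling the $0\to y$ piece by time reversal. The points you flag — Tonelli for the mixture term and the identification of $\hat A_0(\alpha)$ via the reversed measure — are handled correctly, so nothing is missing.
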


The idea, used before, is to take test functions for $\delta$ to get the explicit value of $A_0(\alpha)$ and $B_{0}^\alpha(u)$.
We recall the result in the case of Standard Bessel Process, as given for example on page 465 of \cite{RY}, in the proof of Theorem 3.2., taking $y=0$ and recalling that $\lim_{x\to 0}\frac{I_{\nu}(xy)}{I_{\nu}(x)}=y^{\nu},$
\begin{Theorem}\label{Theoremdimcst}In the case of constant dimension $\delta$, we have
$$\mathcal{Q}_{x\to 0}^{\delta,1}[e^{- I_\mu}]=\exp\left\{\frac{1}{2} \left(F_\mu (0) +1-\frac{1}{\int_0^1\phi_\mu (s)^{-2}ds}\right)x\right\}\left(\frac{1}{\int_0^1\phi_\mu (s)^{-2}ds}\right)^{\delta/2},$$
where, as usual, $\phi_\mu$ is the unique solution of $\phi''=\mu\phi$ which is positive, non increasing and such that $\phi_\mu(0)=1;$ and $F_\mu=\frac{\phi'}{\phi}.$ 
\end{Theorem}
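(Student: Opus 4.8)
The structural part of the identity is already in hand: applying Proposition~\ref{PropLaplPont} with $t=1$ (and absorbing $\alpha$ into the measure) gives $\Qc^{\delta,1}_{x\to 0}[e^{-I_\mu}]=A_0^{\,x}\exp\{\int_0^1 B_0(u)\,\delta_u\,du\}$, which for constant $\delta$ collapses to $A_0^{\,x}\,\mathbf B_0^{\,\delta}$ with $\mathbf B_0=\exp\{\int_0^1 B_0(u)\,du\}$. So the task reduces to computing the two constants $A_0$ and $\mathbf B_0$ explicitly and matching them to the $x$-coefficient $\tfrac12\bigl(F_\mu(0)+1-(\int_0^1\phi_\mu(s)^{-2}ds)^{-1}\bigr)$ and the $\delta/2$-power $(\int_0^1\phi_\mu(s)^{-2}ds)^{-\delta/2}$ claimed in the statement. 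The plan is to pin down $A_0$ from the test specialization $\delta\equiv0$ and $\mathbf B_0$ from the test specialization $x=0$, reducing in each case to the known squared Bessel bridge computation of \cite{RY} (Chapter XI, proof of Theorem 3.2) at right endpoint $y=0$.

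For $x=0$, one realizes $\Qc^{\delta,1}_{0\to0}$ as the $\lambda\to\infty$ limit of the tilted law $\Qc^{\delta}_0[\,\cdot\,e^{-\lambda X_1}]/\Qc^{\delta}_0[e^{-\lambda X_1}]$. Both $\Qc^{\delta}_0[e^{-I_\mu-\lambda X_1}]=\exp\{\tfrac12\int_0^1(\Phi_\lambda'/\Phi_\lambda)\,\delta\}$ and $\Qc^{\delta}_0[e^{-\lambda X_1}]=\exp\{\tfrac12\int_0^1(\Psi_\lambda'/\Psi_\lambda)\,\delta\}$ are given by Proposition~\ref{PropoCarmona2} applied to $\mu+\lambda\epsilon_1$ and to $\lambda\epsilon_1$ respectively, where $\Phi_\lambda,\Psi_\lambda$ are the associated Sturm--Liouville solutions; as $\lambda\to\infty$, $\Phi_\lambda$ and $\Psi_\lambda$ converge (once the vanishing boundary value $\Phi_\lambda(1)\asymp 1/\lambda$ is scaled out) to the solutions of $\phi''=\mu\phi$ and of $\phi''=0$ on $[0,1]$ with $\phi(0)=1,\ \phi(1)=0$. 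Comparing the two logarithmic derivatives, the diverging contributions cancel and one is left with a finite constant which, by reduction of order — the Dirichlet-type companion of $\phi_\mu$ being $s\mapsto\phi_\mu(s)\int_0^s\phi_\mu(u)^{-2}\,du$ — is expressed through $\int_0^1\phi_\mu(s)^{-2}ds$, giving $\mathbf B_0$ as in the statement. For $\delta\equiv0$, the event $\{X_1=0\}$ has positive mass $\Qc^{0}_x[X_1=0]=e^{-x/2}$, so $\Qc^{0,1}_{x\to0}[e^{-I_\mu}]=e^{x/2}\,\Qc^{0}_x[e^{-I_\mu}\mathbf 1_{\{X_1=0\}}]$, and $\Qc^{0}_x[e^{-I_\mu}\mathbf 1_{\{X_1=0\}}]$ is the $\lambda\to\infty$ limit of $\Qc^{0}_x[e^{-I_\mu-\lambda X_1}]=\exp\{\tfrac x2\Phi_\lambda'(0)\}$ (Proposition~\ref{PropoCarmona2}, dimension $0$). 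Writing the limiting solution of $\phi''=\mu\phi$, $\phi(0)=1$, $\phi(1)=0$, in the basis $\{\phi_\mu,\ s\mapsto\phi_\mu(s)\int_0^s\phi_\mu(u)^{-2}du\}$ gives $\Phi_\lambda'(0)\to F_\mu(0)-(\int_0^1\phi_\mu(s)^{-2}ds)^{-1}$, and the factor $e^{x/2}$ supplies the remaining $+1$ in the $x$-coefficient. Combining $A_0^{\,x}\mathbf B_0^{\,\delta}$ yields the stated identity. (Alternatively one may quote the general $x\to y$ Revuz--Yor bridge formula and let $y\downarrow0$, using $I_\nu(w)\sim(w/2)^\nu/\Gamma(\nu+1)$, i.e.\ $\lim_{z\to0}I_\nu(zy)/I_\nu(z)=y^{\nu}$ with $\nu=\delta/2-1$, to cancel the $y$-dependent prefactors of the two kernels.)

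The main obstacle is the analytic bookkeeping in these limiting procedures: one must justify interchanging $\lambda\to\infty$ (resp.\ $y\downarrow0$) with the Laplace transform, and — the genuinely delicate point — identify exactly which linear combination of the two fundamental solutions of $\phi''=\mu\phi$ appears, together with its normalizing constant, so that the reduction-of-order identity collapses everything to $F_\mu(0)$ and $\int_0^1\phi_\mu(s)^{-2}ds$ with $\phi_\mu$ normalized as in the statement (positive, non-increasing, $\phi_\mu(0)=1$). Useful final checks are $\mu\equiv0$ (then $\phi_\mu\equiv1$, $F_\mu\equiv0$, $\int_0^1\phi_\mu(s)^{-2}ds=1$, and the right-hand side is $1$) and $\mu=\tfrac{b^2}{2}\,ds$, for which the formula must reproduce Corollary 3.3 of \cite{RY} already invoked in the proof of Theorem~\ref{Prop-Laplace}. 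Everything else — additivity of bridges to zero, linearity in $x$ and in constant $\delta$, and the modified-Bessel asymptotics — is routine.
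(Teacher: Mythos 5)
The paper offers no proof of this statement: it is quoted directly from the proof of Theorem 3.2, Chapter XI of \cite{RY}, specialised to $y=0$ via $\lim_{z\to 0}I_\nu(zy)/I_\nu(z)=y^\nu$ — which is exactly your parenthetical alternative. Your main route (structural form $A_0^{\,x}\mathbf{B}_0^{\,\delta}$ from Proposition \ref{PropLaplPont}, then identification of the two constants by exponential tilting with $e^{-\lambda X_1}$, $\lambda\to\infty$, and Sturm--Liouville asymptotics) is therefore a genuinely different, self-contained derivation. The strategy is sound, and your computation of the $x$-coefficient is correct: the Dirichlet solution $\Phi_\infty=\phi_\mu-c_\infty\psi$ with $\psi(s)=\phi_\mu(s)\int_0^s\phi_\mu(u)^{-2}du$, $\psi'(0)=1$ and $c_\infty=\left(\int_0^1\phi_\mu^{-2}\right)^{-1}$ indeed gives $\Phi_\infty'(0)=F_\mu(0)-\left(\int_0^1\phi_\mu^{-2}\right)^{-1}$, and $e^{x/2}$ supplies the $+1$.

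The gap is in the one step you assert without computing, namely that the $\lambda\to\infty$ limit of $\log\Phi_\lambda(1)-\log\Psi_\lambda(1)$ ``is expressed through $\int_0^1\phi_\mu(s)^{-2}ds$, giving $\mathbf{B}_0$ as in the statement.'' Carrying it out: the Wronskian identity $\phi_\mu\psi'-\phi_\mu'\psi\equiv 1$ gives $\Phi_\infty'(1)=-1/\psi(1)=-\left(\phi_\mu(1)\int_0^1\phi_\mu^{-2}\right)^{-1}$, and since the boundary condition at $1$ forces $\lambda\Phi_\lambda(1)$ and $\lambda\Psi_\lambda(1)$ to converge to the corresponding derivatives, one obtains
\begin{equation*}
\mathbf{B}_0^{\,\delta}=\lim_{\lambda\to\infty}\left(\frac{\Phi_\lambda(1)}{\Psi_\lambda(1)}\right)^{\delta/2}=\left(\phi_\mu(1)\int_0^1\phi_\mu(s)^{-2}ds\right)^{-\delta/2},
\end{equation*}
with an extra factor $\phi_\mu(1)^{-\delta/2}$ that the stated formula (and Theorem \ref{ThmA6}) omits. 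Your own proposed sanity check detects this: for $I_\mu=\frac{b^2}{2}\int_0^1X_s\,ds$ one finds $\phi_\mu(s)=\cosh(b(1-s))/\cosh b$, so $\phi_\mu(1)\int_0^1\phi_\mu^{-2}=\frac{\sinh b}{b}$ while $\int_0^1\phi_\mu^{-2}=\frac{\cosh b\,\sinh b}{b}$, and only the former reproduces the $(b/\sinh b)^{\delta/2}$ of Corollary 3.3 of \cite{RY}, which the paper itself uses in Theorem \ref{Prop-Laplace}. (An even more elementary test: for $\mu=\lambda\epsilon_{1/2}$ the bridge $\Qc^{\delta,1}_{0\to 0}$ gives $X_{1/2}\sim\frac14\chi^2_\delta$, hence $(2/(2+\lambda))^{\delta/2}$, which again matches $\left(\phi_\mu(1)\int_0^1\phi_\mu^{-2}\right)^{-\delta/2}$ and not $\left(\int_0^1\phi_\mu^{-2}\right)^{-\delta/2}$.) There is also a normalization issue you pass over: even the $x$-coefficient only checks out if $\phi_\mu$ solves $\phi''=2\mu\phi$ for the functional $e^{-\int X\,d\mu}$ (equivalently $\phi''=\mu\phi$ for $e^{-\frac12\int X\,d\mu}$), a factor-of-two slip inherited from the paper's transcription of Proposition \ref{PropoCarmona2}. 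So, as written, the assertion that your limit computation ``gives $\mathbf{B}_0$ as in the statement'' is not correct: either the computation must be completed (yielding the corrected formula with the $\phi_\mu(1)$ factor) or the statement must be amended; in either case the proof does not close in its present form.
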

We first extend this formula to bridges of arbitrary length (keeping for the moment $\delta$ constant).
Let $\mu$ be a positive Radon measure on $[0,t]$. We have by change of variable, then scaling,
\begin{equation*}
\mathcal{Q}_{x\to y}^{\delta,t}(e^{- I_\mu})= \mathcal{Q}_{x\to 0}^{\delta,t}(e^{-\int_0^t X_s d\mu(s)})=\mathcal{Q}_{x\to 0}^{\delta,t}(e^{-\int_0^1 X_{ts} t d\mu(ts)})=\mathcal{Q}_{x/t\to 0}^{\delta,1}(e^{-\int_0^1 X_{s} t^2 d\mu(ts)}).
\end{equation*}
By Theorem \ref{Theoremdimcst}, we deduce that 
$$\mathcal{Q}_{x\to 0}^{\delta,t}[e^{- I_\mu}]=\exp\left\{\frac{1}{2} \left(F_\mu (0) +1-\frac{1}{\int_0^1\phi_\mu (s)^{-2}ds}\right)\frac{x}{t}\right\}\left(\frac{1}{\int_0^1\phi_\mu (s)^{-2}ds}\right)^{\delta/2},$$
where $\phi_\mu$ is the unique solution of $\phi''=t^2\mu(t\cdot )\phi$. It is clear that $\phi_\mu=\phi_\mu^t(t\cdot),$ where
$\phi_\mu^t$ is the unique solution of $\phi''=\mu\phi$ which is positive, non increasing and such that $\phi_\mu^t(0)=1.$
Expressing the previous result in term of $\phi_\mu^t$ we have, with obvious notations,
$$\mathcal{Q}_{x\to 0}^{\delta,t}[e^{- I_\mu}]=\exp\left\{\frac{1}{2} \left(F_\mu^t (0) +\frac{1}{t}-\frac{1}{\int_0^t\phi_\mu^t (s)^{-2}ds}\right)x\right\}\left(\frac{t}{\int_0^t\phi_\mu^t (s)^{-2}ds}\right)^{\delta/2}.$$
We deduce from this expression that $A_0(1)=\exp\left\{\frac{1}{2} \left(F_\mu^t (0) +\frac{1}{t}-\frac{1}{\int_0^t\phi_\mu^t (s)^{-2}ds}\right)\right\}.$
We also deduce that $\hat{A}_0(1)=\exp\left\{\frac{1}{2} \left(\hat{F}_\mu^t (0) +\frac{1}{t}-\frac{1}{\int_0^t\hat{\phi}_\mu^t (s)^{-2}ds}\right)\right\},$
where $\hat{\phi_{\mu}^t}$ is the unique solution of $\phi''=\mu(t-\cdot)\phi$ which is positive, non increasing and such that $\hat{\phi_{\mu}^t}(0)=1;$ and $\hat{F}^t_\mu=\frac{\hat{\phi_{\mu}^t} '}{\hat{\phi_{\mu}^t}}.$

We now turn to the computation of $B_{0}^1(u)$. For this take $\delta=\chi_{[b,t]}.$ We have as previously explained,

$$\exp\left\{\int_{b}^{t}B_{0}^\alpha(u)du\right\}=\mathcal{Q}_{0\to 0}^{1,t-b}\left[e^{- \int_0^{t-b}X_s d\mu(s+b)}\right]=\left(\frac{t-b}{\int_0^{t-b}\phi_\mu^{t,b} (s)^{-2}ds}\right)^{1/2},$$
where $\phi_\mu^{t,b}$ is the unique solution of $\phi''=\mu(b+\cdot)\phi$ which is positive, non increasing and such that $\phi_\mu^{t,b}(0)=1.$ therefore $\phi_\mu^{t,b}=\frac{\phi_\mu^{t}(b+.)}{\phi_\mu^t(b)}.$
Therefore we have
\begin{equation}\label{represb}\exp\left\{\int_{b}^{t}B_{0}^1(u)du\right\}=\mathcal{Q}_{0\to 0}^{1,t-b}\left[e^{- \int_0^{t-b}X_s d\mu(s+b)}\right]=\left(\frac{t-b}{\phi_\mu^{t}(b)^2\int_b^{t}\phi_\mu^{t} (s)^{-2}ds}\right)^{1/2}.\end{equation}
We take the logarithm and differentiate with respect to $b$ to get:
$$B_{0}^1(u)=\frac{1}{2(t-u)}+ F_\mu^t(u)-\frac{\phi_\mu^t(u)^{-2}}{2\int_u^{t}\phi_\mu^{t} (s)^{-2}ds}.$$
Finally, using (\ref{represb}), we have
$${\bf B}_0(1)=\exp\left\{\int_{0}^{t}B_{0}^1(u) du\right\}=\left(\frac{t}{\int_0^t\phi_\mu^t (s)^{-2}ds}\right)^{1/2}.$$
Finally, putting everything together, we have:
\begin{Theorem}\label{ThmA6}
\begin{equation*}
\begin{aligned}
\hspace{-1.5cm}\mathcal{Q}_{x\to y}^{\delta,t}(e^{-I_\mu})=\exp\frac{x}{2} \left(F_\mu^t (0) +\frac{1}{t}-\frac{1}{\int_0^t\phi_\mu^t (s)^{-2}ds} \right)\exp\frac{y}{2} \left(\hat{F}_\mu^t (0) +\frac{1}{t}-\frac{1}{\int_0^t\hat{\phi}_\mu^t (s)^{-2}ds}\right)
\\\exp\left\{\int_{0}^{t}\left(\frac{1}{2(t-u)}+ F_\mu^t(u)-\frac{\phi_\mu^t(u)^{-2}}{2\int_u^{t}\phi_\mu^{t} (s)^{-2}ds}\right)\delta_{u}du\right\}\left(\sum_{n=0}^{\infty} b_{\delta,x,y}^t(n) \left(\frac{t}{\int_0^t\phi_\mu^t (s)^{-2}ds}\right)^{2n}\right),
\end{aligned}
\end{equation*}
where
$\phi_\mu^t$ is the unique solution of $\phi''=\mu\phi$ which is positive, non increasing and such that $\phi_\mu^t(0)=1,$
 $\hat{\phi_{\mu}^t}$ is the unique solution of $\phi''=\mu(t-\cdot)\phi$ which is positive, non increasing and such that $\hat{\phi_{\mu}^t}(0)=1;$ and $F_\mu^t=\frac{{\phi_{\mu}^t}'}{\phi_{\mu}^t},$ $\hat{F}^t_\mu=\frac{\hat{\phi_{\mu}^t} '}{\hat{\phi_{\mu}^t}}.$ 
\end{Theorem}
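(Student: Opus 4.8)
The plan is to reduce everything to Corollary \ref{ThmLaplPont}: that statement already fixes the shape of the answer, so all that remains is to identify the four ingredients $A_0(1)$, $\hat A_0(1)$, $B_0^1(u)$ and ${\bf B}_0(1)$ explicitly in terms of the Sturm--Liouville function $\phi_\mu^t$. I take $\alpha=1$ throughout, which is harmless since $\alpha$ can be absorbed into $\mu$. Thus the proof splits into a classical input, four explicit computations, and one final substitution.

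First I would take as input the Revuz--Yor identity of Theorem \ref{Theoremdimcst} for the unit-length bridge $\mathcal{Q}^{\delta,1}_{x\to 0}$ with constant dimension. To upgrade it to a bridge of length $t$, I would change variables $s\mapsto ts$ inside $I_\mu=\int X\,d\mu$ and then invoke the Bessel scaling of Proposition \ref{PropoScaling1}, obtaining $\mathcal{Q}^{\delta,t}_{x\to 0}(e^{-I_\mu})=\mathcal{Q}^{\delta,1}_{x/t\to 0}(e^{-\int_0^1 X_s\, t^2\,d\mu(ts)})$. The step that needs care is relating the auxiliary function for the rescaled measure $t^2\mu(t\cdot)$ to $\phi_\mu^t$: one verifies $\phi_\mu=\phi_\mu^t(t\cdot)$ directly from the defining ODE $\phi''=\mu\phi$ together with the normalization $\phi(0)=1$ and the non-increasing requirement, which pin it down uniquely. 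Substituting back produces the length-$t$, constant-$\delta$ formula, from which, setting $y=0$, one reads off $A_0(1)=\exp\{\tfrac12(F_\mu^t(0)+1/t-1/\int_0^t\phi_\mu^t(s)^{-2}ds)\}$. Applying the very same computation to the pushforward of $\mu$ under $s\mapsto t-s$ gives $\hat A_0(1)$ with $\phi_\mu^t$ replaced by $\hat{\phi}_\mu^t$.

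To obtain $B_0^1(u)$ I would feed indicator test functions $\delta=\chi_{[b,t]}$ into the formula: for such a $\delta$ the process vanishes on $[0,b]$ and is a $1$-dimensional squared Bessel bridge on $[b,t]$, so by the Markov property and the length-$(t-b)$ version of the identity just derived, $\exp\{\int_b^t B_0^1(u)\,du\}=(\,(t-b)\,/\,(\phi_\mu^t(b)^2\int_b^t\phi_\mu^t(s)^{-2}ds)\,)^{1/2}$, where the shift identity $\phi_\mu^{t,b}=\phi_\mu^t(b+\cdot)/\phi_\mu^t(b)$ is again read off from $\phi''=\mu(b+\cdot)\phi$. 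Taking logarithms and differentiating in $b$ yields the claimed $B_0^1(u)=\tfrac{1}{2(t-u)}+F_\mu^t(u)-\tfrac{\phi_\mu^t(u)^{-2}}{2\int_u^t\phi_\mu^t(s)^{-2}ds}$; near $u=t$ one checks integrability, which holds since the $\tfrac{1}{2(t-u)}$ singularity is integrable and the remaining terms stay bounded. Putting $b=0$ in the same representation gives ${\bf B}_0(1)=(t/\int_0^t\phi_\mu^t(s)^{-2}ds)^{1/2}$, hence ${\bf B}_0(1)^{4n}=(t/\int_0^t\phi_\mu^t(s)^{-2}ds)^{2n}$.

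Finally I would substitute $A_0(1)$, $\hat A_0(1)$, $B_0^1(u)$ and ${\bf B}_0(1)$ into the formula of Corollary \ref{ThmLaplPont}; collecting the exponential factors and the power series with the coefficients $b_{\delta,x,y}^t(n)$ of Lemma \ref{cond} gives exactly the stated identity. The only genuine obstacle in the whole argument is the bookkeeping for the function $\phi$ under the length rescaling and the time shift, together with the justification that one may differentiate the one-parameter family of identities in $b$; all the probabilistic content is already packaged in Theorem \ref{ThmBridge} and Corollary \ref{ThmLaplPont}, and the rest is the classical Revuz--Yor computation.
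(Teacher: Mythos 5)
Your proposal is correct and follows essentially the same route as the paper's own argument in Appendix \ref{LTAF}: rescale the Revuz--Yor unit-length constant-dimension formula to length $t$, identify $\phi_\mu=\phi_\mu^t(t\cdot)$, extract $A_0(1)$ and $\hat A_0(1)$, use test functions $\delta=\chi_{[b,t]}$ with the shift identity $\phi_\mu^{t,b}=\phi_\mu^t(b+\cdot)/\phi_\mu^t(b)$ to get $B_0^1(u)$ by logarithmic differentiation in $b$, read off ${\bf B}_0(1)$ at $b=0$, and substitute into Corollary \ref{ThmLaplPont}. No substantive differences to report.
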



\begin{thebibliography}{3} 
\bibitem{Broadie}{\sc  Broadie, M. and Kaya, O.}, 2006, {\em Exact Simulation of Stochastic Volatility and other Affine Jump Diffusion Processes} Operations Research 54, 217-231.
\bibitem{Carmona} {\sc Carmona, P.}, 1994, {\em G\'en\'eralisations de la loi de l'arc sinus et entrelacements de processus de Markov}, Th\`ese de Doctorat de l'Universit\'e Paris 6.
\bibitem{GBPhd} {\sc Deelstra, G.}, 1995, {\em Long-term returns in stochastic interest rate models}, Ph.D. thesis, VUB Brussels.
\bibitem{DD1} {\sc Deelstra, G. and Delbaen, F.}, 1995, {\em Long-term returns in stochastic interest rate models}, Insurance: Mathematics and Economics, 17, 163-169.
\bibitem{DD2} {\sc Deelstra, G. and Delbaen, F.}, 1995, {\em Long-term returns in stochastic interest rate models: Convergence in law}, Stochastics ans Stochastics Reports, 55, 253-2477.
\bibitem{DS} {\sc Delbean, F. and Shirakawa, H.}, 2002, {\em Squared Bessel processes and their applications to the square root interest rate model}, Asia Pacific Financial Markets, 9, 169-190.
\bibitem{Faraud}{\sc Faraud, G.}, 2009, {\em Estimates on the speedup and slowdown for a diffusion in a drifted Brownian Potential}, Journal of Theoretical Probability, 24-1, 194-239.
\bibitem{Gla}{\sc Glasserman, P. and Kim, K.K}, 2011{\em Gamma Expansion of the Heston Stochastic Volatility Model}, to appear in Finance and Stochastics.
\bibitem{GJY}{\sc Going-Jaeschke, A. and Yor, M.}, 2003, {\em A survey and some generalizations of Bessel processes}, Bernoulli Volume 9, N 2, 313-349.
\bibitem{GN}{\sc Goutte, S. and Ngoupeyou, A.}, 2011, {\em Conditional Laplace formula in regime switching model: Application to defaultable bond}, Preprint.
\bibitem{PY} {\sc Pitman, J. and Yor, M.}, 1982, {\em A decomposition of Bessel bridges}, Zeitschrift fur Wahrscheinlichkeitstheorie und verwandte Gebiete, 59, 425-457.
\bibitem{Ri} {\sc Ridoux, M.S.}, 2009 {\em Generating random numbers from a distribution specified by its Laplace transform}, Stat. Comput. 19, 439-450.
\bibitem{RY}{\sc Revuz, D. and Yor, M.}, 1998, {\em Continuous Martingales and Brownian Motion}, Berlin, Heidelberg and New York, Springer, 3rd Edition.
\bibitem{SW} {\sc Shiga, T. and Watanabe, S.}, 1973, {\em Bessel diffusions as a one-parameter family of diffusion processes}, Zeitschrift fur Wahrscheinlichkeitstheorie und verwandte Gebiete, 27, 37-46.
\bibitem{SHI02} {\sc Shirakawa, H.}, 2002, {\em Squared Bessel Processes and Their Applications to the Square Root Interest Rate Model}, Asia-Pacific Financial Markets, 9, 169-190.
\bibitem{Yor} {\sc Yor, M.}, 1992, {\em On some exponential functionals of Brownian motion}, Adv. Applied Probabilities, 24, 509-531.
\bibitem{WataComp}{\sc Watanabe, N.I. and S.}, 1977, {\em A comparison theorem for solutions of stochastic differential equations and its applications}, Osaka J. Math.,  14, 619-633.
\bibitem{Watanabe}{\sc Watanabe, S.}, 1975, {\em On time inversion of one-dimensional diffusion processes}, Probab. Theory and Related Fields,  31, 115-124.
\end{thebibliography}
\end{document}